\theoremstyle{definition}
\newtheorem{define}{Definition}[section]
\newtheorem{remark}{Remark}[section]
\newtheorem{theorem}{Theorem}[section]
\newtheorem{lemma}{Lemma}[section]
\newtheorem{corollary}{Corollary}[section]
\newtheorem*{rep@theorem}{\rep@title}
\newcommand{\newreptheorem}[2]{%
\newenvironment{rep#1}[1]{%
 \def\rep@title{#2 \ref{##1}}%
 \begin{rep@theorem}}%
 {\end{rep@theorem}}}
\begin{document}

%%%%%%%%%%%%%%
%% Abstract %%
%%%%%%%%%%%%%%
\begin{abstract}
For a knot $K\subset S^3$, its exterior $E(K) = S^3\backslash\eta(K)$ has a singular foliation by Seifert surfaces of $K$ derived from a circle-valued Morse function $f\colon E(K)\to S^1$. When $f$ is self-indexing and has no critical points of index 0 or 3, the regular levels that separate the index-1 and index-2 critical points decompose $E(K)$ into a pair of compression bodies. We call such a decomposition a \textit{circular Heegaard splitting} of $E(K)$. We define the notion of \textit{circular distance} (similar to Hempel distance) for this class of Heegaard splitting and show that it can be bounded under certain circumstances. Specifically, if the circular distance of a circular Heegaard splitting is too large: (1) $E(K)$ can't contain low-genus incompressible surfaces, and (2) a minimal-genus Seifert surface for $K$ is unique up to isotopy.
\end{abstract}

%%%%%%%%%%%%%%%%%%
%% Title Matter %%
%%%%%%%%%%%%%%%%%%
\title{A Distance for Circular Heegaard Splittings}

\author{Kevin Lamb}
\address{Kevin Lamb, Department of Mathematics, University of the Pacific}
\email{klamb@pacific.edu}

\author{Patrick Weed}
\address{Patrick Weed, Department of Mathematics, University of California, Davis}
\email{psweed@math.ucdavis.edu}
\maketitle

%\begin{center}
%Kevin Lamb\footnote{Department of Mathematics, \latex{University of the Pacific} - \texttt klamb@pacific.edu\normalfont}
%\and
%Patrick Weed\footnote{Department of Mathematics, University of California, Davis}
%\end{center}

%%%%%%%%%%%%%%%%%%%%%%%%%%%%%%%%%%%%%%%%%%%%%%%%%%%%%%%%%%%%%%%%%%%%%%%%%%%%%%%%%%%%%%%%%%%%%%%%%%%%%%%%%%%%%%%%%%%%%
%%%%%%%%%%%%%%%%%%%%%%%%%%%%%%%%%%%%%%%%% B E G I N   P A P E R %%%%%%%%%%%%%%%%%%%%%%%%%%%%%%%%%%%%%%%%%%%%%%%%%%%%%
%%%%%%%%%%%%%%%%%%%%%%%%%%%%%%%%%%%%%%%%%%%%%%%%%%%%%%%%%%%%%%%%%%%%%%%%%%%%%%%%%%%%%%%%%%%%%%%%%%%%%%%%%%%%%%%%%%%%%

%%%%%%%%%%%%%%%%%%
%% Introduction %%
%%%%%%%%%%%%%%%%%%
\section{Introduction}

In the study of classical knot theory, it is often the case that questions about a knot can be translated into questions about a 3-manifold (and vice-versa). Hence, we can study knots with tools from the theory of 3-manifolds. In particular, for a knot $K$ in a 3-manifold $M$, we study its \textit{complement} $C(K) = M\backslash K$ or its (compact) \textit{exterior} $E(K) = M\backslash\eta(K)$, where $\eta(K)$ is an open regular neighborhood of $K$ in $M$. The classification of 3-manifolds is a central problem in low-dimensional topology, and decomposition theorems lie at its heart.

Decompositions of 3-manifolds are typically created via surfaces. Hierarchies are a primary example of this methodology (cf. \cite{Jaco}), although the present work concerns itself with \textit{Heegaard splittings} of 3-manifolds. This is a decomposition of a 3-manifold $M$ into two \textit{handlebodies} $H_1, H_2$ via a closed separating surface $\Sigma$ (called a \textit{Heegaard surface}) such that $H_1\cap H_2 = \partial H_1 = \partial H_2 = \Sigma$ and $H_1\cup H_2 = M$. By a theorem of Moise \cite{Moise}, every closed, orientable 3-manifold admits a triangulation $\mathcal{T}$. We can then construct a Heegaard splitting of $M$ using the second barycentric subdivision of $\mathcal{T}$. Hence, every closed, orientable 3-manifold has a Heegaard splitting.

Although Heegaard surfaces separate a 3-manifolds into standard components, they aren't ``elementary'' surfaces of 3-manifolds. A Heegaard surface $\Sigma$ for a 3-manifold $M$ can be \textit{compressed} (via properly embedded \textit{compression disks}) into a less complicated surface through standard cut-and-paste techniques. In fact, $\Sigma$ can be compressed to a 2-sphere $S^2$ on both sides (though, not necessarily simultaneously) since $\Sigma$ bounds handlebodies to both sides. A more useful property for a surface to have is to not compress entirely to a 2-sphere. In other words, we'd like for there to be a surface $F\subset M$ that can't be compressed (to either side if $F$ is orientable). Such a surface is called \textit{incompressible} and can be thought of as an elementary part of $M$.

Although they aren't incompressible surfaces, we can study how compressible Heegaard surfaces are to both sides simultaneously. Harvey \cite{Harv} described a way to assign a simplicial complex to a compact, orientable surface $F$. The vertices of this complex are isotopy classes of simple closed curves on $F$ that do not bound disks in $F$ and are not parallel into $\partial F$ (we call these curves \textit{essential} in $F$). We draw an edge between two distinct vertices if they have representatives that are disjoint on $F$. The remainder of this simplicial complex $\mathcal{C}(F)$ is built as a flag complex (filling in simplices for vertices with representatives in $F$ that are pairwise disjoint from one another in $F$), though we will only concern ourselves with its 1-skeleton $C(F)$.

For a Heegaard splitting $H_1\cup_\Sigma H_2 = M$, Hempel \cite{Hemp} defined a measure of incompressibility of $\Sigma$ in $M$. A pair of compression disks $D_i\subset H_i$ ($i=1,2$) have boundary curves that lie on $\Sigma$. These curves are representatives of vertices in $C(\Sigma)$, and we can use the graph distance to measure how far separated they are in $C(\Sigma)$. The minimum distance in $C(\Sigma)$ between the boundaries of all pairs of compression disks $D_1\subset H_1$, $D_2\subset H_2$ is called the \textit{(Hempel) distance} of $\Sigma$, denoted $d(\Sigma)$. We can define standard classifications of Heegaard splittings using this notion. We say $M$ is: (1) \textit{reducible} if $d(\Sigma) = 0$, (2) \textit{weakly reducible} if $d(\Sigma) = 1$, and (3) \textit{strongly irreducible} if $d(\Sigma)\geq2$. Distance has been used in many settings (cf. \cite{BS}, \cite{Bir}, \cite{E}, \cite{Thom}) to prove various results both algebraic and geometric in nature.

In \cite{Hart}, Hartshorn showed that incompressible surfaces can be used to bound the distance of Heegaard splittings of closed, orientable 3-manifolds. In particular, he shows that for any 3-manifold $M$ that contains an incompressible surface of genus $g\geq1$, the distance $d(\Sigma)$ of any Heegaard splitting $\Sigma$ of $M$ is bounded by $2g$. Scharlemann and Tomova \cite{ScTom} elaborate on this idea and use Cerf theoretical arguments to show that there is a similar bound if we replace the incompressible surface with a strongly irreducible Heegaard splitting $P$; that is, $d(\Sigma)\leq2-\chi(P) = 2g(P)$. We recall the notion of distance of a Heegaard splitting and state these theorems precisely later.

Let $K\subset S^3$ be a knot. In \cite{Fab}, Manjarrez-Guti\'errez employed a circle-valued Morse function $f\colon E(K)\to S^3$ to induce a \textit{circular handle decomposition} of the exterior $E(K)$ of $K$. She then adapted the ideas of \cite{ST2} to create a notion of \textit{circular thin position} of such decompositions of $E(K)$. We elaborate on these ideas and develop notions of \textit{circular Heegaard splittings} and of \textit{circular distance} of circular Heegaard splittings.

Circular Heegaard splittings are different from the usual Heegaard splittings of 3-manifolds in two important ways: (i) the Heegaard surface is not closed and (ii) the Heegaard surface is comprised of two connected components. Both of these pose a challenge to the definition of a circular distance. We show that methods of Hartshorn carry over to this setting, but much care needs to be taken in their adaptation. We show how the shift from Heegaard splittings of closed manifolds to circular Heegaard splittings of knot exteriors restricts these methods. We also provide analogous technical lemmas for this new setting. Once these have laid a solid foundation for Hartshorn-like methods, we define the circular distance of a circular Heegaard splitting and show that it behaves as expected using these methods.

We then prove the main theorem:

\begin{reptheorem}{thm:Main}[Main Theorem]
Let $K\subset S^3$ be a knot whose exterior $E(K)$ has a circular Heegaard splitting $(F,S)$ such that $F$ is incompressible. If its exterior $E(K)$ contains a closed, orientable, essential surface $G$ of genus $g$, then $cd(F,S) \leq 2g$.
\end{reptheorem}

An immediate corollary follows and bears striking resemblance to Hartshorn's theorem for closed 3-manifolds; if we consider essential surfaces disjoint from $F$, we get a bound in the usual curve complex for $S$:

\begin{repcorollary}{cor:Main}
Let $K\subset S^3$ be a knot whose exterior $E(K)$ has a circular Heegaard splitting $(F,S)$ such that $F$ is incompressible. If its exterior $E(K)$ contains a closed, orientable, essential surface $G$ of genus $g$ that can be isotoped to be disjoint from $F$, then $d(S) \leq 2g$.
\end{repcorollary}

We can show a similar theorem for the case when the incompressible surface has that knot $K$ as its boundary. That is, we modify Theorem \ref{thm:Main} to account for the case when $G$ is an incompressible Seifert surface for $K$:

\begin{reptheorem}{thm:SeifertMain}
Let $K\subset S^3$ be a knot whose exterior $E(K)$ has a circular Heegaard splitting $(F,S)$ such that $F$ is incompressible. If $F'$ is an incompressible Seifert surface for $K$ with genus $g$ that is not isotopic to $F$, then $d(S)\leq2g+1$.
\end{reptheorem}

This result leads to a partial affirmation of a conjecture of Manjarrez-Guti\'errez \cite{Fab} pertaining to the thin levels of circular thin positions of $E(K)$ and their relation to minimal-genus Seifert surfaces of $K$:

\begin{repcorollary}{cor:UniqueSeifert}
Let $K\subset S^3$ be a knot whose exterior $E(K)$ has a circular Heegaard splitting $(F,S)$ such that $F$ is a Seifert surface of $K$ that realizes the Seifert genus $g(K)$. If $d(S)>2g(K)+1$, then $F$ is the unique Seifert surface of minimal genus for $K$ up to isotopy.
\end{repcorollary}

This presentation herein was submitted as the core work in the doctoral dissertation of the first author, but the results within are the core work of the dissertations of both authors. Both authors would like to thank their adviser, Abby Thompson, for her guidance in this work.

%%%%%%%%%%%%%%%%%%%%%%%%%%%%%%%%
%% Background and Definitions %%
%%%%%%%%%%%%%%%%%%%%%%%%%%%%%%%%
\section{Background and Definitions}

%%%%%%%%%%%%%%%%%%%%%
%% Knots and Links %%
%%%%%%%%%%%%%%%%%%%%%
\subsection{Knots and Links}

For a positive integer $n$, let $S_n = S^1\sqcup\cdots\sqcup S^1$ be a disjoint union of $n$ circles. Let $M^3$ be a compact 3-manifold. Two smooth embeddings $f,g\colon S_n\to M$ are called \textit{isotopic} if there exists a homotopy $h\colon S^1\times[0,1]\to M$ such that

\begin{enumerate}

	\item
	$f = h|_{S_n\times\{0\}}$
	
	\item
	$g = h|_{S_n\times\{1\}}$
	
	\item
	for all $0\leq t\leq1$, $h|_{S_n\times\{t\}}$ is an embedding.

\end{enumerate}
We define a \textit{link in $M$} to be the isotopy class of a smooth embedding $f\colon S_n\to M$, and we set $n$ to be the number of \textit{components} of the link. A \textit{knot in $M$} is defined to be a link with one component.

For a link $L\subset M$, the 3-manifold $C(L) = M\backslash L$ is called the \textit{complement of $L$ in $M$}. We distinguish this from the compact \textit{exterior of $L$ in $M$}, denoted by $E(L) = M\backslash\eta(L)$.

For a link $L\subset M$, a compact, orientable surface $F\subset M$ whose boundary $\partial F$ is the link will be called a \textit{Seifert surface} of $L$. It should be noted that such a surface always exists for any link in $S^3$.

A \textit{handlebody} is closed regular neighborhood of a graph $\Gamma$. A graph that is a deformation retract of a handlebody will be called a \textit{spine} of the handlebody. See Figure \ref{fig:HandlebodyCompressionBody}.

A link $L\subset M$ is said to be \textit{fibered} if there exists a fibration $f\colon S^3\backslash L\to S^1$ such that each component $L_i$ has a neighborhood framed as $S^1\times D^2$ (with $L_i\cong S^1\times\{0\}$) where $f_{S^1\times(D^2\backslash\{0\})}$ is the map $(x,y)\mapsto\frac{y}{|y|}$. This framing condition is in place to specify behavior near $L$ since $M\backslash L$ is not compact.

%%%%%%%%%%%%%%%%%%%%%%%%%%%%%%%%%%%%%%%%
%% 3-Manifolds and Heegaard Splitings %%
%%%%%%%%%%%%%%%%%%%%%%%%%%%%%%%%%%%%%%%%
\subsection{3-Manifolds and Heegaard Splittings}

\subsubsection{Compressibility and Irreducibility}

Let $M$ be a 3-manifold and $F\subset M$ be a compact, orientable surface properly embedded in $M$. We say that \textit{$F$ is compressible in $M$} if there exists a disk $D\subset M$ to either side of $F$ such that

\begin{enumerate}

	\item
	$F\cap D = \partial D\subset F$,
	
	\item
	$\partial D$ is not $\partial$-parallel into $\partial F$, and
	
	\item
	$\partial D$ does not bound a disk in $F$; that is, $\partial D$ is a curve \textit{essential in $F$}.
	
\end{enumerate}
Such a disk $D$ is called a \textit{compressing disk for $F$ in $M$}. See Figure \ref{fig:Compression}. If there are no compressing disks for $F$ in $M$, we say that $F$ is \textit{incompressible in $M$}. An incompressible surface $F$ is called \textit{essential in $M$} if it also not $\partial$-parallel into $\partial M$.

%%%%%%%%%%%%%%%%%%%%%%%
%% Compression Image %%
%%%%%%%%%%%%%%%%%%%%%%%
\begin{figure}[h]
\centering
\includegraphics[width=5in]{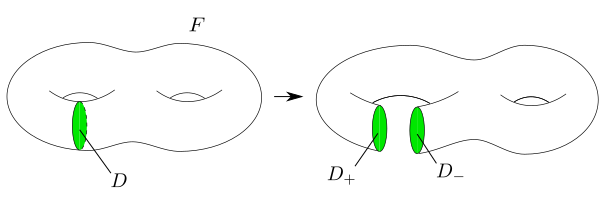}
\caption{A compression of the surface $F$ via the compressing disk $D$.}
\label{fig:Compression}
\end{figure}

Let $\eta(D)\subset M$ be an open regular neighborhood of a compressing disk $D$ for a surface $F\subset M$. Denote by $D_+$ and $D_-$ the two disk components of $\partial\eta(D)$ so that $A = \partial\eta(D)\backslash(D_+\cup D_-)\subset F$ is an annulus. Then the surface $(F\backslash A)\cup(D_+\cup D_-)$ will be called a \textit{compression of $F$ in $M$}. The process of replacing $F$ with a compression of $F$ will be called \textit{compressing $F$ in $M$}.

If $M$ has non-empty boundary and for a compact surface $F$ we have $(F,\partial F)\subset(M,\partial M)$, we say $F$ is \textit{properly embedded in $M$} if both $F$ is embedded in $M$ and $\partial F$ is embedded in $\partial M$. A properly embedded surface $F\subset M$ is said to be \textit{$\partial$-compressible} if there exists a disk $D\subset M$ such that

\begin{enumerate}

	\item
	$D\cap F = \alpha\subset\partial D$ and $D\cap\partial M = \beta\subset\partial D$,
	
	\item
	$\alpha\cup\beta = \partial D$ with $\alpha\cap\beta = \partial\alpha = \partial\beta$, and
	
	\item
	$\alpha$ does not cobound a disk with another arc in $\partial F$; that is, $\alpha$ is an arc \textit{essential in $F$}.
	
\end{enumerate}
Such a disk $D$ above is called a \textit{$\partial$-compressing disk} for $F$ in $M$. See Figure \ref{fig:BoundaryCompression} If there are no $\partial$-compressing disks for $F$ in $M$, we say that $F$ is \textit{$\partial$-incompressible in $M$}.

%%%%%%%%%%%%%%%%%%%%%%%%%%%%%%%%
%% Boundary Compression Image %%
%%%%%%%%%%%%%%%%%%%%%%%%%%%%%%%%
\begin{figure}[h]
\centering
\includegraphics[width=5in]{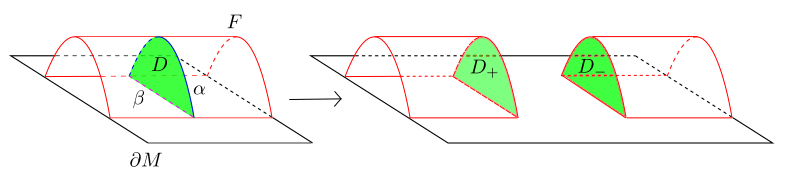}
\caption{A $\partial$-compression of the surface $F$ via the compressing disk $D$.}
\label{fig:BoundaryCompression}
\end{figure}

Let $\eta(D)\subset M$ be an open regular neighborhood of a $\partial$-compressing disk $D$ for a surface $F\subset M$. Denote by $D_+$ and $D_-$ the two disk components of $\partial\eta(D)$ parallel to $D$ so that $\partial\eta(D)\backslash(D_+\cup D_-) = \eta(\alpha)\cup\eta(\beta)$ is an annulus. Then the surface $(F\backslash\eta(\alpha)\cup(D_+\cup D_-)$ will be called a \textit{$\partial$-compression of $F$ in $M$}. The process of replacing $F$ with a $\partial$-compression of $F$ will be called \textit{$\partial$-compressing $F$ in $M$}.

A 3-manifold $M$ is called \textit{irreducible} if every $S^2\subset M$ bounds a 3-ball to at least one side. We say an irreducible 3-manifold $M$ is \textit{Haken} if it contains an incompressible, orientable surface $F$ with positive genus.

\begin{theorem}[Asphericity]
\label{thm:Asphericity}
For any knot $K\subset S^3$, $\pi_n(E(K))$ is trivial for $n\geq2$.
\end{theorem}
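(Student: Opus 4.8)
The plan is to deduce the asphericity statement from the fact that $E(K)$ is an irreducible, $K(\pi,1)$ manifold, which is classical; but since we are asked to sketch a proof, let me give the chain of reasoning one would write out. First I would recall that $E(K)$ is a compact $3$-manifold with torus boundary. The key input is the Sphere Theorem (Papakyriakopoulos): if $\pi_2(E(K))$ were nontrivial, then there would exist an embedded $2$-sphere in $E(K)$ representing a nontrivial element of $\pi_2$. So the first step is to show $E(K)$ is irreducible, i.e.\ every embedded $2$-sphere bounds a ball. This follows because $S^3$ is irreducible: given $S \subset E(K) \subset S^3$, the sphere $S$ bounds a ball $B$ in $S^3$; since $K$ is connected and disjoint from $S$, either $K \subset B$ or $K \subset S^3 \setminus B$, and in the latter case $B$ already lies in $E(K)$, while in the former case the closure of the complementary ball lies in $E(K)$ — in either case $S$ bounds a ball in $E(K)$. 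Hence $\pi_2(E(K)) = 0$.

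Next I would pass to the universal cover $\widetilde{E(K)}$. Since $\pi_1(E(K))$ is infinite (for instance because $E(K)$ has a nontrivial infinite-cyclic quotient $H_1(E(K)) \cong \mathbb{Z}$, coming from the circle-valued Morse function $f$ or simply from Alexander duality), the universal cover is non-compact. A non-compact $3$-manifold has vanishing top homology, so $H_3(\widetilde{E(K)}) = 0$; by Hurewicz and the fact that $\pi_1(\widetilde{E(K)}) = 0$ and $\pi_2(\widetilde{E(K)}) = \pi_2(E(K)) = 0$, we conclude $\pi_3(\widetilde{E(K)}) \cong H_3(\widetilde{E(K)}) = 0$. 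Then one proceeds inductively: $\widetilde{E(K)}$ is an open $3$-manifold, so $H_n(\widetilde{E(K)}) = 0$ for all $n \geq 3$ (by the standard fact that a non-compact manifold of dimension $m$ has trivial homology in degrees $\geq m$), and an inductive application of the Hurewicz theorem then shows $\pi_n(E(K)) = \pi_n(\widetilde{E(K)}) \cong H_n(\widetilde{E(K)}) = 0$ for all $n \geq 2$.

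I expect the main obstacle — really the only nontrivial point — to be verifying irreducibility of $E(K)$ carefully (the separating-sphere-in-$S^3$ argument above), together with invoking the Sphere Theorem to convert the hypothetical nontrivial $\pi_2$ class into an embedded sphere; after that, the passage to the universal cover and the homological bootstrap is routine. An alternative, perhaps cleaner, route is simply to cite that knot exteriors are aspherical: by Papakyriakopoulos's work, any irreducible $3$-manifold with infinite $\pi_1$ and non-empty boundary (or, in the closed case, one that is not covered by $S^3$) is a $K(\pi,1)$, and $E(K)$ manifestly satisfies these hypotheses. Since the statement is labelled as a background theorem, I would present it as a consequence of the Sphere Theorem plus irreducibility rather than re-deriving the asphericity of $3$-manifolds from scratch.
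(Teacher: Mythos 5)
Your proposal is correct and follows essentially the same route the paper takes: the paper does not prove the statement but cites the Sphere Theorem and a theorem of Whitehead (referring to Rolfsen for details), and your argument --- irreducibility of $E(K)$ from the separating-sphere argument in $S^3$, the Sphere Theorem to kill $\pi_2$, then the universal cover together with Hurewicz and the vanishing of high-degree homology of a non-compact $3$-manifold --- is exactly that standard proof written out. The only small point to tidy is that the sphere must be made disjoint from the whole neighborhood $\eta(K)$, not just $K$, which follows since $\eta(K)$ is connected and disjoint from $S \subset E(K)$.
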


This follows from the sphere theorem and a theorem of Whitehead. A full proof can be found in \cite{Rolf}. In particular, the Asphericity Theorem tells us that knot complements (and exteriors) in $S^3$ contain no essential 2-spheres; that is, for a knot $K\subset S^3$, both $C(K)$ and $E(K)$ are irreducible 3-manifolds.

\subsubsection{Morse Theory and Heegaard Splittings}

Let $0\leq k\leq n$ be integers. An \textit{$n$-dimensional $k$-handle} is an $n$-ball $B^n = B^k\times B^{n-k}$. When the dimension is immaterial or clear from context, we abbreviate this to a \textit{$k$-handle}. The term $B^{k}\times\{0\}$ will be called the \textit{core} of a $k$-handle, and the term $\{0\}\times B^{n-k}$ will be called its \textit{co-core}. We see in the boundary
\[
S^{n-1} = \partial B^n = [(\partial B^k)\times B^{n-k}]\cup[B^k\times(\partial B^{n-k})] = [S^{k-1}\times B^{n-k}]\cup[B^k\times S^{n-k-1}].
\]
The term $S^{k-1}\times B^{n-k}$ is referred to as the \textit{attaching region} of the $k$-handle. In the case $k=0$, we take $S^{-1} = \emptyset$ and ``attaching a 0-handle'' is the same as adding a disjoint $B^n$. In our context, a handlebody as described in the section above will be regarded as a single 3-dimensional 0-handle with 1-handles glued to its boundary along their attaching regions. If we connect via radial line segements the cores of the 1-handles to the core of the 0-handle, the resulting graph may be regarded as a spine of the handlebody. See Figure \ref{fig:CoresAsSpine}. The four types of 3-dimensional handles are shown in Figure \ref{fig:3DHandles}.

%%%%%%%%%%%%%%%%%%%%%%%%%%
%% Cores As Spine Image %%
%%%%%%%%%%%%%%%%%%%%%%%%%%
\begin{figure}[h]
\centering
\includegraphics{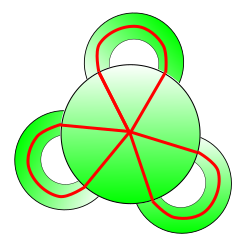}
\caption{Using the cores of 1-handles to define a spine of a 3-dimensional handlebody.}
\label{fig:CoresAsSpine}
\end{figure}

%%%%%%%%%%%%%%%%%%%%%%
%% 3D Handles Image %%
%%%%%%%%%%%%%%%%%%%%%%
\begin{figure}[h]
\centering
\includegraphics[width=5in]{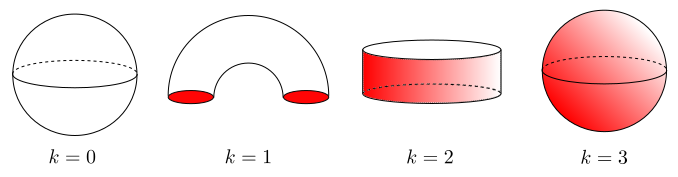}
\caption{The four types of 3-dimensional handles and their attaching regions (in red).}
\label{fig:3DHandles}
\end{figure}

Let $F$ be a compact, connected, orientable surface. Define $W_0 = (F\times I)\cup T$, where $T$ is a collection of 2-handles attached to $F\times\{1\}$. If there are any $S^2$ components in $\partial W_0$, we fill them with 3-handles. The resulting 3-manifold $W$ is called a \textit{compression body}. We denote by $\partial_-W$ the surface $F\times \{0\}$ and $\partial_+W = \partial W\backslash \partial_-W$. The union of a graph and the inner boundary $\partial W\backslash(F\times\{0\})$ that is a deformation retract of $W$ is called a \textit{spine} of $W$. A handlebody is considered a trivial compression body in the sense that $\partial_+W = \emptyset$. See Figure \ref{fig:HandlebodyCompressionBody}.

%%%%%%%%%%%%%%%%%%%%%%%%%%%%%%%%%%%%%%%%%%%
%% Handlebody and Compression Body Image %%
%%%%%%%%%%%%%%%%%%%%%%%%%%%%%%%%%%%%%%%%%%%
\begin{figure}[h]
\centering
\includegraphics[width=5in]{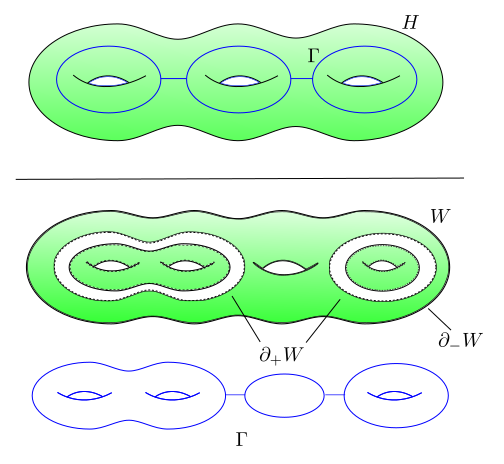}
\caption{A handlebody $H$ and a compression body $W$ with representative spines for each.}
\label{fig:HandlebodyCompressionBody}
\end{figure}

Let $M$ be a compact, orientable $n$-manifold. A smooth function $f\colon M\to[0,1]$ is called \textit{Morse} if all of its critical points are non-degenerate and occur at different critical values. The non-degeneracy condition implies that at each critical point $p\in M$ of $f$ there is a coordinate patch $x = (x_1,\ldots,x_n)$ with $p$ identified with the origin such that
\[
f(x) = f(p)-x_1^2-\cdots-x_k^2+x_{k+1}^2+\cdots+x_n^2.
\]
We call $k\in\{0,1,\ldots,n\}$ the \textit{index} of the critical point $p$. Given a Morse function $f\colon M\to[0,1]$, it is well-known (cf. \cite{Mil}) that $M$ has the homotopy type of a CW-complex with one cell of dimension $k$ for each critical point of $f$ with index $k$. In particular, $M$ has the homotopy type of a collection of $n$-dimensional $k$-handles, one for each index-$k$ critical point of $f$, glued together. It is easy to see in this description that an index-$k$ critical point of $f$ is an index-$(n-k)$ critical point of $-f$. In this sense, a $k$-handle is thought of as an ``upside-down'' $(n-k)$-handle, and this can also be seen as $B^n = B^k\times B^{n-k} = B^{n-k}\times B^k$ from the definition of a $k$-handle.

For a Morse function $f\colon M\to[0,1]$, if the critical values of all index-$k$ critical points are less than the critical values of all index-$\ell$ critical points whenever $k<\ell$, then we say $f$ is \textit{self-indexing}. In the context of compact, orientable 3-manifolds, a self-indexing Morse function (with $\partial M\subset f^{-1}\{0,1\}$) defines a decomposition of $M$ into two compression bodies $V_0,V_1$, where $V_0$ contains the 0- and 1-handles and $V_1$ contains the 2- and 3-handles. Any regular level separating the 1-handles from the 2-handles is a closed surface $\Sigma\subset M$. The triple $(\Sigma;V_0,V_1)$ is called a \textit{Heegaard splitting} of $M$. We call $\Sigma$ a \textit{Heegaard surface} for $M$. We often abbreviate the triple $(\Sigma;V_0,V_1)$ as $\Sigma$ when the compression bodies are immaterial or clear from context. Because every compact, orientable 3-manifold $M$ admits a self-indexing Morse function \cite{Mil}, a Heegaard splitting $(\Sigma;V_0,V_1)$ always exists for $M$.

Let $(\Sigma;V_0,V_1)$ be a Heegaard splitting for the compact, orientable 3-manifold $M$. Let $\sigma_0$ and $\sigma_1$ be spines for $V_0$ and $V_1$, respectively. We see that $M\backslash(\sigma_0\cup\sigma_1)\cong\Sigma\times(0,1)$. We can use this product structure to define a function $h\colon M\to[0,1]$ where $h^{-1}(t)\cong\Sigma$ for all $0<t<1$ and $h^{-1}(i) = \sigma_i$ for $i = 1,2$. Such a function $h$ is called a \textit{sweep-out of $M$ induced by $\Sigma$} (or just a \textit{sweep-out} when $M$ and $\Sigma$ are clear from context).

\subsubsection{Surfaces in Handlebodies and Compression Bodies}

Let $\Delta$ be a collection of compressing disks for a compression body $W$. If compressing $\partial_-W$ in $W$ along all disks in $\Delta$ produces $\partial_+W$ and possibly a collection of $S^2$ components, then $\Delta$ is called a \textit{complete disk system for $W$}. If there are no $S^2$ components in the resulting compression, then $\Delta$ is called a \textit{minimal complete disk system} for $W$.

In a compression body $W$, let $\gamma$ be a curve in $\partial_+W$. Using the product structure of $W$, we stretch $\gamma$ across $W$ to $\partial_-W$ to create an annulus $\gamma\times[0,1]$. An annulus created in this fashion is called a \textit{spanning annulus} of $W$. If $\gamma$ is essential in $\partial_+W$, we say that $\gamma\times[0,1]$ is an \textit{essential spanning annulus} of $W$. See Figure \ref{fig:SpanningAnnuli}.

%%%%%%%%%%%%%%%%%%%%%%%%%%%
%% Spanning Annuli Image %%
%%%%%%%%%%%%%%%%%%%%%%%%%%%
\begin{figure}[h]
\centering
\includegraphics{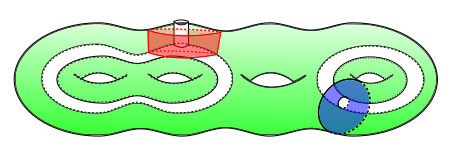}
\caption{An essential (blue) spanning annulus and a inessential, $\partial$-parallel spanning annulus (red) for a compression body. The boundary curves of the inessential $\partial$-parallel on the inner and outer boundary components of the compression body.}
\label{fig:SpanningAnnuli}
\end{figure}

\begin{theorem}[cf. \cite{Jaco}]
The only compact, connected, orientable, incompressible, $\partial$-incompressible, surfaces in a compression body that aren't parallel into the boundary are essential disks and essential spanning annuli. In a handlebody, such surfaces can only be essential disks.
\end{theorem}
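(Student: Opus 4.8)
The plan is to analyze $S$ inside the complement of a complete meridian disk system for $W$ and then reassemble. Write $W$ for the compression body (a handlebody being the case $\partial_+W=\emptyset$), and let $S\subset W$ be a connected, orientable, properly embedded surface that is incompressible and $\partial$-incompressible but not parallel into $\partial W$. First I would clear two trivial cases. A disk is automatically incompressible, so if $S$ is a disk it is an essential disk and we are done; and since $W$ is irreducible (the degenerate cases $\partial_-W=S^2$, $W=B^3$, etc.\ being routine), $S$ is not an incompressible $2$-sphere. So I may assume $S$ is neither a disk nor a sphere, and the goal becomes: show $S$ is an essential spanning annulus, and that this cannot happen when $W$ is a handlebody.

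Next I would fix a complete disk system $\Delta$ for $W$ — minimal when $W$ is not a handlebody — so that $W$ cut along $\Delta$ is $\partial_+W\times I$, or a single $3$-ball when $W$ is a handlebody, and then isotope $S$ transverse to $\Delta$ minimizing the number of components of $S\cap\Delta$. Now comes the usual innermost/outermost reduction. No component of $S\cap\Delta$ is a circle: an innermost such circle on $\Delta$ bounds a disk $D'\subset\Delta$ meeting $S$ only in its boundary; incompressibility of $S$ makes $\partial D'$ bound a disk in $S$, and irreducibility of $W$ produces a ball across which $S$ is isotoped to remove the circle, against minimality. No component is an arc: an outermost such arc $\alpha$ on $\Delta$ cuts off a disk $D'\subset\Delta$ with interior disjoint from $S$ and $\partial D'=\alpha\cup\beta$, $\beta\subset\partial W$; thus $D'$ is a $\partial$-compressing disk for $S$, and since $S$ is $\partial$-incompressible the arc $\alpha$ cuts off a disk $E\subset S$ (proper, as $S$ is not a disk), whence the standard outermost-arc exchange — replacing $E$ by a copy of $D'$ pushed off $\Delta$ — is an ambient isotopy of $S$ that strictly lowers $|S\cap\Delta|$, again a contradiction. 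So $S$ can be isotoped off $\Delta$ and lies in $W$ cut along $\Delta$.

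Finally I would reassemble. A connected incompressible, $\partial$-incompressible surface that is neither a disk nor a $2$-sphere cannot be properly embedded in a $3$-ball; hence the handlebody case is vacuous, forcing $S$ to have been a disk, i.e.\ an essential disk, as claimed. When $W$ is not a handlebody, $S$ lies in $\partial_+W\times I$, and by the classification of incompressible, $\partial$-incompressible surfaces in a product $\Sigma\times I$ (see \cite{Jaco}) it is either a horizontal copy $\Sigma\times\{\ast\}$ — excluded, being parallel into $\partial W$ — or a vertical annulus $\gamma\times I$ with $\gamma$ a simple closed curve in $\partial_+W$ that must be essential lest $S$ be $\partial$-parallel. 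Since the $\partial_-W$-end of this annulus is disjoint from the scars $\partial\Delta$, putting $\Delta$ back leaves $\gamma\times I$ untouched, so in $W$ it is an annulus stretching the essential curve $\gamma\subset\partial_+W$ across to $\partial_-W$, i.e.\ an essential spanning annulus.

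I expect the arc step of the reduction to be the main obstacle: verifying that the outermost-arc exchange is a genuine ambient isotopy of $S$, rather than merely a $\partial$-compression, is exactly where the hypothesis that $S$ is not a disk enters — an essential disk genuinely need not be isotopic off a prescribed complete disk system, so the claim ``$S\cap\Delta=\emptyset$'' fails without it. The classification of surfaces in $\Sigma\times I$ and the handful of degenerate configurations (a component of $\partial_\pm W$ a sphere, annulus, or disk; the convention that an ``incompressible $2$-sphere'' is one bounding no ball) are standard and would be treated briefly.
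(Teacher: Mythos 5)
Your proposal is correct in outline, but note that for this particular statement the paper offers no proof at all---it is quoted with ``cf.\ \cite{Jaco}''---so the relevant comparison is with the paper's own $\partial^*$-analogues, Lemmas \ref{lem:DisksAndAnnuli} and \ref{lem:ProdDisks}, whose proofs (following \cite{BoOt}) share your skeleton exactly: isotope the surface off a minimal complete disk system by an innermost-circle/outermost-arc reduction, cut the compression body open along it, and identify the surface in the resulting product (a ball, in the handlebody case). Where you genuinely diverge is the endgame: after cutting $A$ open to $V\cong F\times I$, the paper argues with relative homotopy groups---$\pi_1$ of the surface rel its $S$-boundary is nontrivial while $\pi_1(V,F)$ is trivial---to manufacture an essential arc cobounding a disk, i.e.\ a $\partial$-compression, and concludes by contradiction; you instead quote the classification of incompressible, $\partial$-incompressible surfaces in $\Sigma\times I$ (horizontal or vertical) and reassemble, which is legitimate and is essentially what Jaco's book supplies, at the cost of importing that classification as a black box where the paper's version is self-contained. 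Two repairs worth making: first, your boundary labels are mixed---under the paper's convention $\partial_-W$ is the large surface $F\times\{0\}$ and a handlebody is the case $\partial_+W=\emptyset$, so cutting along a minimal complete system yields $\partial_-W\times I$, not $\partial_+W\times I$; second, the outermost-arc exchange deserves the scrutiny you yourself flag: knowing that $\alpha$ cuts a proper disk $E$ off the non-disk $S$ is not by itself enough to conclude that replacing $E$ by a push-off of $D'$ is an ambient isotopy lowering $|S\cap\Delta|$---one must still see that the disk $E\cup_\alpha D'$ is $\partial$-parallel through a region that can be cleared of $S$ (and, in the product step, that incompressibility and $\partial$-incompressibility persist after pushing boundary arcs off the scars of $\Delta$). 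The paper dispatches the same step with the phrase ``a standard innermost-disk/outermost-arc argument,'' so this is a matter of supplying routine detail rather than a wrong turn, but as written it is the one place your sketch asserts more than it verifies.
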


%%%%%%%%%%%%%%%%%%%%%%%
%% The Curve Complex %%
%%%%%%%%%%%%%%%%%%%%%%%
\subsection{The Curve Complex}

\subsubsection{Constructing the Complex of Curves}

As outlined in the introduction, the following construction is due to Harvey \cite{Harv} as he studied the asymptotic geometry of a surface's Teichm\'uller space. To a compact, orientable (possibly disconnected) surface $F$, we assign a 1-dimensional simplicial complex $C(F)$ as follows:

\begin{enumerate}

	\item
	For each isotopy class of closed curves essential in $F$, add a vertex.
	
	\item
	Add an edge between distinct vertices if there are representatives of each isotopy class (one from each vertex) that are disjoint in $F$.

\end{enumerate}

This construction may be continued to construct a higher-dimensional simplicial complex by requiring that this complex be a flag complex. The result is called the \textit{curve complex} of $F$. We will only concern ourselves with the 1-skeleton $C(F)$ of this complex and still refer to it as the ``curve complex'' of $F$. See Figure \ref{fig:CurveComplex}.

%%%%%%%%%%%%%%%%%%%%%%%%%
%% Curve Complex Image %%
%%%%%%%%%%%%%%%%%%%%%%%%%
\begin{figure}[h]
\centering
\includegraphics[width=5in]{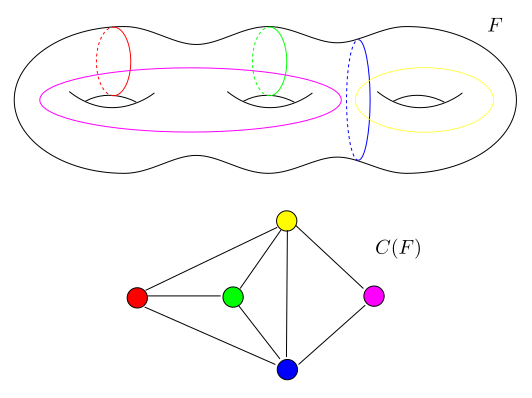}
\caption{A surface $F$ and the subgraph of its curve complex $C(F)$ induced by the vertices representing the colored curves.}
\label{fig:CurveComplex}
\end{figure}

For a pair of essential curves $\gamma_1,\gamma_2$ in the same connected component of $F$, we define the \textit{distance $d_\mathcal{C}(\gamma_1,\gamma_2)$} between $\gamma_1$ and $\gamma_2$ to be the graph distance in $C(F)$ between the vertices $[\gamma_1]$ and $[\gamma_2]$. If $\gamma_1$ and $\gamma_2$ lie in different connected components, then we take the convention $d_\mathcal{C}([\gamma_1],[\gamma_2]) = \infty$. We also use the convention that $d_\mathcal{C}([\gamma],\emptyset) = 0$ for any essential curve $\gamma\subset F$. We often abbreviate and abuse the notation $d_\mathcal{C}(\gamma_1,\gamma_2)$ to denote the distance between $\gamma_2$ and $\gamma_2$, omitting the brackets denoting isotopy classes of the curves.

\subsubsection{Types of Heegaard Splittings}

Let $\Sigma$ be a Heegaard splitting of a closed, orientable 3-manifold $M$. The surface $\Sigma$ is compressible in many ways to both sides in the handlebodies $H_1$ and $H_2$. Our goal is to measure how compressible $\Sigma$ is to both sides. We collect all compressing disks of $\Sigma$ in $H_1$ into the set $\Gamma_1$ and all compressing disks of $\Sigma$ in $H_2$ into the set $\Gamma_2$. In order to compare these sets, we appeal to the curve complex $C(\Sigma$) of the Heegaard surface; that is, where the two sets could potentially intersect.

Each disk in $\Gamma_1\cup\Gamma_2$ has a boundary curve $\gamma\subset\Sigma$ that is essential in $\Sigma$. The isotopy class $[\gamma]$ is represented by a vertex in $C(\Sigma)$. Denote by $V_i\subset C(\Sigma)$ the collection of vertices of $C(\Sigma)$ defined in this way by the set $\Gamma_i$ (for $i=1,2$).

The \textit{(Hempel) distance $d(\Sigma)$} of a Heegaard splitting $\Sigma$ \cite{Hemp} is the minimum graph distance $d_\mathcal{C}$ along $C(\Sigma)$ from an element of $[\gamma_1]\in V_1$ to an element of $[\gamma_2]\in V_2$; that is,
\[
d(\Sigma) = \min\{d_\mathcal{C}(\partial\alpha,\partial\beta) \ \mid \ \alpha\in\Gamma_1,\beta\in\Gamma_2\}.
\]

If $d(\Sigma) = 0$, there exist compressing disks of $\Sigma$ to both sides whose boundaries are isotopic in $\Sigma$. If we isotope these disks so that their boundary curves coincide, then the disks can be glued together to form a $S^2$ that intersects $\Sigma$ in a curve essential in $\Sigma$. In particular, the Heegaard splitting must be \textit{reducible} or is a genus-one Heegaard splitting of $S^1\times S^2$.

If $d(\Sigma)\leq1$, there exist compressing disks of $\Sigma$ to both sides whose boundaries can be isotoped to be disjoint. This implies the Heegaard splitting is \textit{weakly reducible} as defined in \cite{CG}; that is, there exist essential disks $D_i\subset V_i$ for $i=1,2$ such that $\partial D_1\cap\partial D_2 = \emptyset$. Observe that reducible Heegaard splittings are indeed weakly reducible splittings since we can isotope the boundary of one disk to be disjoint from the other. 

If $d(\Sigma)\geq2$, $\Sigma$ is \textit{strongly irreducible} \cite{CG}; that is, each essential disk $V_1$ intersects each essential disk in $V_2$.

Hartshorn \cite{Hart} and Scharlemann-Tomova \cite{ScTom} relate the existence of essential surfaces to strongly irreducible Heegaard surfaces, respectively, to the distance of a Heegaard splitting. We state these theorems below:

\begin{reptheorem}{thm:Hart}[\cite{Hart}]
Let $M$ be a Haken 3-manifold containing an orientable incompressible surface of genus $g$. Then any Heegaard splitting of $M$ has distance at most $2g$.
\end{reptheorem}

\begin{theorem}[\cite{ScTom}]
\label{thm:ScTom}
Suppose $P$ and $Q$ are Heegaard splittings of a closed 3-manifold $M$. Then either $d(P)\leq2g(Q)$ or $Q$ is isotopic to $P$ or to a stabilization of $P$.
\end{theorem}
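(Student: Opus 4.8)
The plan is to compare the two Heegaard splittings by the Cerf-theoretic method: take sweep-outs for $P$ and for $Q$, form the associated Rubinstein--Scharlemann graphic, and use its combinatorics either to produce a short path in $C(P)$ joining the boundary of a compressing disk of $P$ on one side to that of one on the other side, or to pin $Q$ down so rigidly that it can be amalgamated back onto $P$. This is Hartshorn's argument (his theorem, stated above) with the incompressible surface replaced by the flexible surface $Q$, and the flexibility of $Q$ is precisely what forces the second sweep-out and the extra casework into the picture.

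First I would clear away the degenerate cases. If $g(Q)=0$ then $M=S^{3}$ and the statement reduces to Waldhausen's classification of Heegaard splittings of $S^{3}$ (reading ``stabilization of $P$'' as ``related to $P$ by stabilizations'', since the genus-$0$ splitting is a common destabilization); so I assume $g(Q)\ge 1$, hence $2g(Q)\ge 2$. If $d(P)\le 1$ the bound $d(P)\le 2g(Q)$ is immediate, so I assume $P$ is strongly irreducible. I may also assume $M$ is irreducible, since a Heegaard splitting of a reducible closed $3$-manifold is reducible and hence has distance $0$. Now fix a sweep-out $f\colon M\to[-1,1]$ with $f^{-1}(0)=P$ and $f^{-1}(\pm1)$ the spines $\Sigma_{P}^{\pm}$ of the two handlebodies $H_{P}^{\pm}$, and similarly a sweep-out $g\colon M\to[-1,1]$ for $Q$ with spines $\Sigma_{Q}^{\pm}$. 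After a generic isotopy of $Q$, the locus of $(s,t)$ at which the level surfaces $P_{s}=f^{-1}(s)$ and $Q_{t}=g^{-1}(t)$ are non-transverse projects to a finite graph $\Gamma\subset(-1,1)^{2}$, the Rubinstein--Scharlemann graphic, and on each complementary region of $\Gamma$ the combinatorial isotopy type of the pair $(P_{s},Q_{t})$ is constant.

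Next I would put the standard labels on the regions, following Scharlemann--Tomova: a region is labeled $\mathbf a$ (resp.\ $\mathbf b$) if $P_{s}$ lies ``essentially above'' (resp.\ ``essentially below'') $Q_{t}$, in the sense that a compression of $Q_{t}$, or a spine of $Q$, can be isotoped off $P_{s}$ to the $H_{P}^{+}$ side (resp.\ the $H_{P}^{-}$ side). Strong irreducibility of $P$ is what guarantees that no region is left unlabeled, since an unlabeled region would supply disjoint compressing disks for $P$ on its two sides. For $s$ near $-1$ every region is labeled $\mathbf b$ and for $s$ near $+1$ every region is labeled $\mathbf a$, so each horizontal slice $\{t=t_{0}\}$ contains regions of both labels. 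The argument now splits. In the first branch, the graphic produces a slice $\{t=t_{0}\}$ along which $Q_{t_{0}}$ is ``spanning'' for the $f$-sweep-out: as $s$ increases one tracks the isotopy class in $C(P_{s})\cong C(P)$ of a carefully chosen essential curve $\gamma_{s}$ of $P_{s}\cap Q_{t_{0}}$; for $s$ near $-1$ the intersection consists of meridians of $H_{P}^{-}$, so $\gamma_{s}$ bounds a compressing disk of $P$ below, and for $s$ near $+1$ it consists of meridians of $H_{P}^{+}$. The class $[\gamma_{s}]$ can change only as the slice crosses a vertex of $\Gamma$, i.e.\ a saddle tangency of $P_{s}$ with $Q_{t_{0}}$, and across such a tangency the intersection curves are altered only by a band move, so the configurations before and after are disjoint on $P$ and hence at $C(P)$-distance at most $1$. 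A count of the saddles of $f|_{Q_{t_{0}}}$ at which $[\gamma_{s}]$ genuinely changes, controlled by the complexity $2-\chi(Q)=2g(Q)$ of $Q$, then bounds the length of the resulting path in $C(P)$ by $2g(Q)$, giving $d(P)\le 2g(Q)$.

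In the remaining branch no slice is spanning; then a Rubinstein--Scharlemann pinching argument lets one isotope $Q$ off the spine $\Sigma_{P}^{-}$, hence into the handlebody $H_{P}^{+}$, and symmetrically off $\Sigma_{P}^{+}$, so $Q$ comes to lie in a product collar $P\times I$ of $P$; as a Heegaard surface of $P\times I$ it is standard up to stabilization, and amalgamating across $P$ identifies $Q$, up to isotopy, with $P$ or with a stabilization of $P$. The step I expect to be the main obstacle is the bookkeeping in the spanning branch: one must choose $t_{0}$ so that $P_{s}\cap Q_{t_{0}}$ never contains a circle inessential in both surfaces (removing such circles uses irreducibility of $M$), keep $\gamma_{s}$ essential in $P$ across each band move or reroute the path when it becomes inessential, and verify that only the saddles contributing an honest change of isotopy class are counted, so that the total is $\le 2g(Q)$ rather than merely $O(g(Q))$. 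Extracting this sharp count from the structure of $\Gamma$, rather than from a naive tally of critical points of $f|_{Q_{t_{0}}}$, is the technical heart of the theorem.
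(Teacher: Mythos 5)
This statement is quoted background: the paper offers no proof of it, citing Scharlemann--Tomova \cite{ScTom}, so the only meaningful comparison is with that source. Your outline does reproduce the architecture of their argument --- two sweep-outs, the Rubinstein--Scharlemann graphic, a labelling of regions whose completeness is forced by strong irreducibility of $P$, and the dichotomy between a spanning configuration (yielding a path in $C(P)$ from a meridian on one side to a meridian on the other) and a non-spanning one (yielding the stabilization conclusion) --- so as a strategy it is the right one, and your preliminary reductions ($g(Q)=0$ via Waldhausen, $d(P)\leq 1$ trivial, reducible $M$ via Haken's lemma) are fine.

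But as a proof it stops short precisely at the two decisive points. First, the bound $d(P)\leq 2g(Q)=2-\chi(Q)$ cannot be extracted from a tally of tangencies along a horizontal slice: the number of saddles of $f|_{Q_{t_0}}$ equals $(\#\text{centers})-\chi(Q)$ and is unbounded in terms of $g(Q)$ alone, so one must show that only finitely many ``essential'' events change the isotopy class of the tracked curve and that each such event corresponds to a compression or $\partial$-compression of $Q$ intersected with a compression body of $P$, so that Euler characteristic, not a critical-point count, does the bookkeeping. You name this as the technical heart but supply no mechanism for it, and the labelling as you state it (pushing a spine or a compression of $Q_t$ off $P_s$) is not yet tied to compressing disks of $P$ in the way that makes the count come out to $2-\chi(Q)$. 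Second, in the non-spanning branch, ``a pinching argument lets one isotope $Q$ into a collar $P\times I$'' and ``amalgamating identifies $Q$ with $P$ or a stabilization'' are each substantial claims: one must show that after the isotopy $Q$ is genuinely a Heegaard surface of the collar (not merely a surface embedded in it) before invoking the classification of splittings of $(\text{surface})\times I$ and amalgamation. Both items are the actual content of \cite{ScTom}; without them your proposal is a correct plan rather than a proof.
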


%%%%%%%%%%%%%%%%%%%%%%%%%%%%%%%%%%%%%%
%% Circular Heegaard Decompositions %%
%%%%%%%%%%%%%%%%%%%%%%%%%%%%%%%%%%%%%%
\subsection{Circular Handle Decompositions}

In the following section, much of the notation and definitions follow from Manjarrez-Guti\'errez \cite{Fab}.

Let $K\subset S^3$ be a knot. Let $F\colon S^3\backslash K\to S^1$ be a Morse function and define $f\colon E(K)\to S^1$ to be the restriction $F|_{E(K)}$. Since a fundamental cobordism of $f$ can be isotoped to have no local maxima or minima \cite{Mil}, we assume that all critical points of $f$ have index 1 or 2.

We construct a handle decomposition of $E(K)$ from $f$ as follows: Choose $R$ a regular level of $f$ between an index-1 and an index-2 critical point. There are many such choices: We assume $R$ is chosen to have smallest genus among all choices. If $f$ has no critical points, then $K$ is fibered and there is only one choice for $R$. Otherwise, we see that the critical points of $f$ define collections $N = \{N_1,\ldots,N_k\}$ and $T = \{T_1,\ldots,T_k\}$ of 1- and 2-handles, respectively. We assume the handles in $N_1$ appear first after $R$ and, moreover, that the handles in $N_i$ appear before the handles in $T_i$ and that the handles in $T_i$ appear before the handles in $N_{i+1}$ (taking all indices modulo $k$ where necessary). Construct the compact manifold
\[
H = (R\times I)\cup(N_1\cup T_1)\cup\cdots\cup(N_k\cup T_k)
\]
by flowing $R$ along $E(K)$ via the gradient of $f$, attaching handles from $N$ and $T$ as prescribed by the critical points of $f$. Choose a regular level $S_i$ separating $N_i$ from $T_i$; that is,
\[
S_i \cong \overline{\partial[(R\times I)\cup(N_1\cup T_1)\cup\cdots\cup N_i]\backslash[\partial E(K)\cup(R\times\{0\})]}.
\]
Call $S_i$ a \textit{thick level} of $f$ and set $\mathcal{S} = \cup_{i=1}^kS_i$. Similarly, choose a regular level $F_i$ separating $T_i$ from $N_{i+1}$; that is,
\[
F_i = \overline{\partial[(R\times I)\cup(N_1\cup T_1)\cup\cdots\cup(N_i\cup T_i)]\backslash[\partial E(K)\cup(R\times\{0\})]}.
\]
Call $F_i$ a \textit{thin level} of $f$ and set $\mathcal{F} = \cup_{i=1}^kF_i$.

We also define
\[
W_i = (\text{collar of }F_i)\cap(N_i\cup T_i),
\]
which is a 3-manifold with boundary $\partial W_i = F_i\cup F_{i+1}\cup(W_i\cap\partial E(K))$. The thick level $S_i$ defines a compact (but not closed!) Heegaard surface for $W_i$, dividing it into compression bodies
\[
A_i = (\text{collar of }F_i)\cup N_i\quad\text{and}\quad B_i = (\text{collar of }S_i)\cup T_i.
\]
The boundary $\partial A_i$ can be seen as the union of three components; that is, $\partial A_i = S_i\cup F_i\cup \partial_vA_i$, where $\partial_vA_i = \partial A_i\cap\partial E(K)$. We call $\partial_vA_i$ the \textit{vertical boundary of $A_i$}. We can similarly define $\partial_vB_i = \partial B_i\cap \partial E(K)$ to be the vertical boundary of $B_i$. Note that the vertical boundary is an annulus.

Observe that $F_k$ is diffeomorphic to $R$. The function $f$ defines a diffeomorphism $\phi\colon R\to R$. When $K$ is fibered, $\phi$ is called the \textit{monodromy} of $K$. In this case, we see that
\[
E(K) = H/(R\times\{0\}) = \phi(R\times\{1\}).
\]
See Figure \ref{fig:CircularHandleDecomposition}. This construction is well-defined - a different choice of $R$ merely translates the labels of the sets $N$ and $T$ by the same amount.

\begin{define}
The collection $\mathcal{D} = \{(W_i;A_i,B_i)\}_{i=1}^k$ will be called the \textit{circular handle decomposition of $E(K)$ induced by $f$}.
\end{define}

%%%%%%%%%%%%%%%%%%%%%%%%%%%%%%%%%%%%%%%%%
%% Circular Handle Decomposition Image %%
%%%%%%%%%%%%%%%%%%%%%%%%%%%%%%%%%%%%%%%%%
\begin{figure}[h]
\centering
\includegraphics[width=5in]{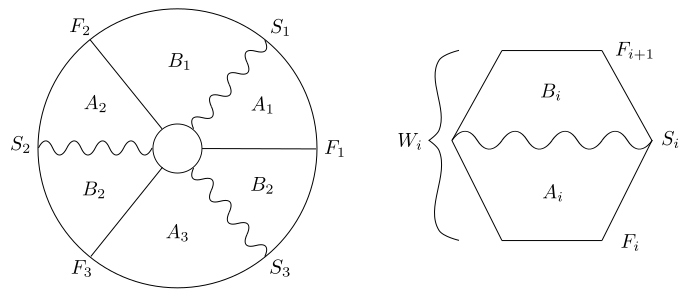}
\caption{A circular handle decomposition of a knot exterior and an arbitrary section $W_i$.}
\label{fig:CircularHandleDecomposition}
\end{figure}

\begin{define}
For a closed, connected surface $S\neq S^2$ define its \textit{complexity} $c(S) = 1-\chi(S)$. If $S$ has a nonempty boundary, define $c(S) = 1-\chi(\overline{S})$, where $\overline{S}$ denotes $S$ with its boundary components capped off with disks. We define $c(S^2) = 0$ and $c(D^2) = 0$. If $S$ is disconnected, define $c(S) = \sum c(S_i)$, where $S = \coprod_i S_i$ and each $S_i$ is connected.
\end{define}

\begin{define}
For a knot $K\subset S^3$ with circular handle decomposition $\mathcal{D}$ for $E(K)$, the \textit{circular width $cw(\mathcal{D})$ of the decomposition $\mathcal{D}$} is the multiset of integers $\{c(S_i)\}_{i=1}^k$, and $|cw(\mathcal{D})| = k$ is the number of thick levels in $\mathcal{D}$. The \textit{circular width $cw(E(K))$ of the knot exterior $E(K)$} is defined to be the minimal circular width among all circular handle decompositions of $E(K)$. The minimum is taken using the lexicographic ordering on multisets of integers.

The pair $(E(K),\mathcal{D})$ is in \textit{circular thin position} if $\mathcal{D}$ realizes the circular width of $E(K)$. When $K$ is a fibered knot, we define $cw(E(K)) = \emptyset$, so $|cw(E(K))| = 0$. If $|cw(E(K))| = 1$, we say that $K$ is \textit{almost-fibered}.
\end{define}

%%%%%%%%%%%%%%%%%%%%%%%%%%%%%%%%%%%%%%%%
%% Almost Fibered Decomposition Image %%
%%%%%%%%%%%%%%%%%%%%%%%%%%%%%%%%%%%%%%%%
\begin{figure}[h]
\centering
\includegraphics[height=2in]{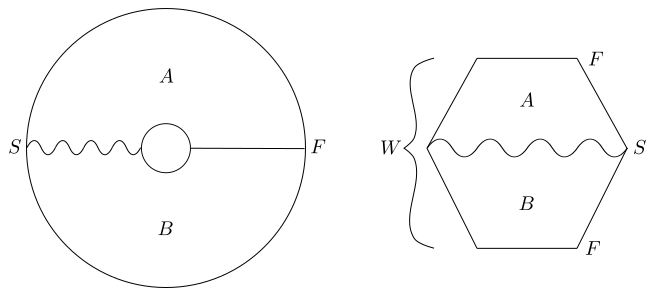}
\caption{An admissible circular handle decomopsition of a knot exterior and its only section $W$. The pair $(F,S)$ describes a circular Heegaard splitting of the knot exterior.}
\label{fig:AlmostFiberedDecomposition}
\end{figure}

Manjarrez-Guti\'errez \cite{Fab} examined knot exteriors using this setup in her doctoral dissertation. In addition to analyzing the behavior of circular width under some common knot operations, she showed:

\begin{theorem}[\cite{Fab}]
Let $K\subset S^3$ be a knot. At least one of the following holds:
\begin{enumerate}

	\item
	$K$ is fibered;
	
	\item
	$K$ is almost-fibered;
	
	\item
	$K$ contains a closed essential surface in its complement. Moreover, this closed essential surface is in the complement of an incompressible Seifert surface of $K$;
	
	\item
	$K$ has at least two non-isotopic, incompressible Seifert surfaces.

\end{enumerate}
\end{theorem}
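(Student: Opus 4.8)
The plan is to run the circular analogue of Scharlemann--Thompson thin position and read the four alternatives off the thin levels. Fix a circular handle decomposition $\mathcal{D}=\{(W_i;A_i,B_i)\}_{i=1}^{k}$ of $E(K)$ that realizes the circular width $cw(E(K))$, so that $k=|cw(E(K))|$. If $k=0$, the inducing Morse function $f$ has no critical points, hence $K$ is fibered and alternative (1) holds; if $k=1$, then $K$ is almost-fibered by definition and alternative (2) holds. From now on assume $k\geq 2$; the goal is to produce either the closed essential surface of (3) or the two non-isotopic Seifert surfaces of (4) from the thin levels $F_1,\dots,F_k$.

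The first, and I expect the hardest, step is to establish the two standard thin-position lemmas in this bounded, circular setting: \textbf{(a)} each thin level $F_i$ is incompressible and $\partial$-incompressible in $E(K)$, and \textbf{(b)} each thick level $S_i$ is a strongly irreducible Heegaard surface for the section $W_i$. Both are proved by contradiction against the lexicographic minimality of $cw(\mathcal{D})=\{c(S_i)\}$. For (b), a weak reduction of $S_i$ in $W_i$ --- disjoint essential disks for $S_i$ in $A_i$ and in $B_i$ --- lets one untelescope $W_i$, splitting it along a new thin level into two sections whose thick levels have strictly smaller complexity, which lowers the multiset width. For (a), a compressing or $\partial$-compressing disk for $F_i$ is isotoped into an adjacent section and used either to reduce the complexity of a neighboring thick level or, after an isotopy of the handle structure, to cancel a $1$-handle against a $2$-handle, again lowering the width. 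The delicate points are exactly the two features flagged in the introduction: the levels are not closed, so one must track the vertical boundary annuli $\partial_vA_i,\partial_vB_i$ along the torus $\partial E(K)$ and allow $\partial$-compressions; and the levels may be disconnected and may acquire sphere or disk components under compression, which are discarded using the irreducibility of $E(K)$ (Theorem~\ref{thm:Asphericity}). Pushing the handle bookkeeping through these moves while preserving the cyclic order is the main obstacle.

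Granting (a) and (b), analyze the $k\geq 2$ incompressible thin levels. Since $f$ restricts on $\partial E(K)$ to a fibration of the boundary torus over $S^1$ with longitudinal fibers, every $F_i$ meets $\partial E(K)$ exactly in a longitude of $K$; discarding inessential components (legitimate by irreducibility of $E(K)$), each surviving thin level is the disjoint union of an incompressible Seifert surface $F_i'$ for $K$ with finitely many closed essential surfaces. If some $F_i$ has such a closed component $G$, then $G$ is a closed essential surface in $E(K)\subset S^3\setminus K$ that lies in the complement of the incompressible Seifert surface $F_i'$, which is precisely alternative (3). Otherwise every thin level is an incompressible Seifert surface for $K$: if two of $F_1,\dots,F_k$ are non-isotopic we obtain alternative (4), while if all $k\geq 2$ of them were mutually isotopic, then --- using the strong irreducibility from (b) together with the structure of Heegaard splittings of $F\times I$ --- consecutive sections could be amalgamated to strictly decrease $|cw(\mathcal{D})|$, contradicting that $\mathcal{D}$ realizes the circular width. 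This exhausts the cases and proves the theorem.
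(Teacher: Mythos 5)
Your outline follows the same route as the source of this statement (the paper itself gives no proof --- the theorem is quoted from Manjarrez-Guti\'errez \cite{Fab}): pass to a decomposition realizing the circular width, prove that thin levels are incompressible and thick levels strongly irreducible, and then read the four alternatives off the thin levels, with $k=0$ giving fibered, $k=1$ giving almost-fibered, a closed component of a thin level giving alternative (3), and non-isotopic Seifert-surface thin levels giving (4). Your sketch of the thinning arguments for (a) and (b) is the standard Scharlemann--Thompson untelescoping adapted to the circular setting and is plausible as far as it goes.

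The genuine gap is in the final case, when all $k\geq 2$ thin levels are isotopic Seifert surfaces. Your proposed contradiction --- amalgamating consecutive sections to ``strictly decrease $|cw(\mathcal{D})|$'' --- misapplies the minimality hypothesis: circular width is the lexicographically ordered multiset $\{c(S_i)\}$, not the number of thick levels, and amalgamation replaces two thick levels by one of strictly larger complexity, which generically \emph{increases} the width; lowering the count alone contradicts nothing. What is actually needed is the product-region lemma: two disjoint, isotopic, incompressible Seifert surfaces cobound a region homeomorphic to $F\times I$ on one of their two sides (a Waldhausen-type fact, in the spirit of \cite{ST1}), and then a two-case analysis --- if the product is the section between $F_i$ and $F_{i+1}$, its handles can be cancelled (equivalently, a nontrivial strongly irreducible splitting of a product is impossible), deleting $c(S_i)$ from the multiset; if the product is the complementary union of sections, cancel all of their handles instead. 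Either way the width multiset strictly decreases, which is the correct contradiction; neither the lemma nor the side analysis appears in your sketch, and the criterion you do invoke would not close the argument. A smaller omission: you assert the closed components of thin levels are \emph{essential}; incompressibility follows from (a), but non-$\partial$-parallelism requires an argument --- for instance, a boundary-parallel torus component $T$ would force the Seifert-surface component of the same level, which is disjoint from $T$ and meets $\partial E(K)$ only in its longitude boundary, to lie entirely in the parallelism region $T^2\times I$, which is impossible since the longitude is not null-homologous there.
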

The point we emphasize here is that circular thin position can be used to show the existence of multiple, non-isotopic, incompressible Seifert surfaces for a knot in $S^3$.

One other result from her work follows from Scharlemann-Thompson \cite{ST2} in their work on thin position for 3-manifolds:
\begin{theorem}[\cite{Fab}]
If $(E(K),\mathcal{D})$ is in circular thin position, then
\begin{enumerate}

\item
Each Heegaard splitting $S_i$ of $W_i$ is strongly irreducible.

\item
Each $F_i$ is incompressible in $E(K)$.

\item
Each $S_i$ is a weakly incompressible surface in $E(K)$.

\end{enumerate}
\end{theorem}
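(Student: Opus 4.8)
The plan is to prove all three parts by contradiction, exploiting minimality of $cw(E(K))$ under the lexicographic order on the multiset $\{c(S_i)\}_{i=1}^k$ of thick-level complexities: in each case a failure of the conclusion is converted into a circular handle decomposition of strictly smaller circular width, using the untelescoping and thinning moves of Scharlemann--Thompson \cite{ST2} adapted to the circular (cyclic) setting.

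First I would prove (1). Suppose some thick level $S_{i_0}$ were weakly reducible, witnessed by essential disks $D^A\subset A_{i_0}$ and $D^B\subset B_{i_0}$ with $\partial D^A\cap\partial D^B=\emptyset$ on $S_{i_0}$. Then $\partial D^A$ is a meridian of a $1$-handle of $N_{i_0}$ and $D^B$ is the cocore of a $2$-handle of $T_{i_0}$, and because their attaching circles are disjoint on $S_{i_0}$ these two handles may be slid and reordered so that the $2$-handle is attached \emph{before} the $1$-handle. This untelescopes $W_{i_0}$ into two consecutive sub-sections with a new thin level between them, replacing the single thick surface $S_{i_0}$ by two thick surfaces, each obtained from $S_{i_0}$ by a nonempty compression and hence of complexity strictly less than $c(S_{i_0})$. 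The resulting multiset is lexicographically smaller, and after re-optimizing (discarding any $S^2$ levels or empty handle collections the move creates) one still has a decomposition of smaller circular width, contradicting circular thin position. The move is local to $W_{i_0}$, so it does not interact with the cyclic gluing $R\times\{0\}=\phi(R\times\{1\})$.

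Next I would prove (2). Suppose some thin level $F_{i_0}$ had a compressing disk $D$ in $E(K)$, chosen to meet the periodic family $\mathcal{F}=\bigcup_j F_j$ minimally. An innermost-disk argument along $D\cap(\mathcal{F}\setminus F_{i_0})$ pushes $D$ into a single section adjacent to $F_{i_0}$; say $D\subset W_{i_0}$ with $\partial D\subset\partial_-A_{i_0}=F_{i_0}$. Since the negative boundary of a compression body is incompressible in it, $D$ cannot lie in $A_{i_0}$, so $D$ meets $S_{i_0}$, and an innermost-circle reduction on $D\cap S_{i_0}$ turns $D$ into data for a thinning move across $S_{i_0}$: either the genus of $F_{i_0}$ drops while it stays thin, or two sections amalgamate. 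Either outcome strictly lowers the circular width after re-optimization, contradicting thin position; hence each $F_i$ is incompressible in $E(K)$.

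Finally, (3) follows from (1) and (2): recall a bicompressible surface is weakly incompressible when compressing disks on opposite sides always have intersecting boundaries. Given compressing disks $D_+,D_-$ for $S_{i_0}$ in $E(K)$ on opposite sides, the incompressibility of every thin level (part (2)) together with innermost-disk arguments lets me isotope $D_\pm$ off $\mathcal{F}$; since $S_{i_0}\subset W_{i_0}$, the disks then lie in $W_{i_0}$, one in $A_{i_0}$ and one in $B_{i_0}$, and strong irreducibility of $S_{i_0}$ inside $W_{i_0}$ from part (1) gives $\partial D_+\cap\partial D_-\neq\emptyset$. I expect the main obstacle to be the innermost-disk reductions underlying parts (2) and (3): because $E(K)$ is cut by the whole periodic family $\mathcal{F}$, a priori a compressing disk can wind around the $S^1$-direction, and one must verify it can nonetheless be confined to a single section and that the ensuing thinning move strictly decreases the lexicographically ordered width without producing degeneracies; the untelescoping in part (1) is more routine, but its bookkeeping still has to be carried out compatibly with the circular structure.
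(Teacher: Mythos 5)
The paper does not prove this statement at all---it is quoted from Manjarrez-Guti\'errez \cite{Fab}, who in turn adapts Scharlemann--Thompson \cite{ST2}---so the comparison is against that standard argument, which is indeed the strategy you are following. Your part (1) is essentially the right untelescoping argument (modulo the glib identification of the disjoint compressing disks with handle cocores; the standard move works for an arbitrary weak reduction, and one only needs that each new thick surface is a nonempty compression of $S_{i_0}$, hence of strictly smaller complexity), and your part (3) is the correct deduction from (1) and (2).

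The genuine gap is in part (2). After you push the compressing disk $D$ for $F_{i_0}$ into an adjacent section and observe that it must meet $S_{i_0}$, you assert that $D$ yields a thinning move: ``either the genus of $F_{i_0}$ drops while it stays thin, or two sections amalgamate,'' and that either outcome strictly lowers circular width. Neither claim holds as stated: circular width is by definition the multiset $\{c(S_i)\}$ of \emph{thick}-level complexities, so compressing a thin level while keeping the thick levels fixed does not change the width at all, and amalgamating two sections replaces two thick surfaces by one of \emph{larger} complexity, which typically increases (not decreases) the lexicographic width. So the contradiction with thin position is not established by your mechanism. The correct route---the one in \cite{ST2} and \cite{Fab}---is to make (2) a consequence of (1): after the innermost-circle reduction you obtain a compressing disk for some thin level $F_j$ contained in a single section $W$, with boundary essential in $\partial_- $ of that section; the Casson--Gordon lemma (a strongly irreducible splitting $A\cup_S B$ has $\partial_-A\cup\partial_-B$ incompressible in $A\cup_S B$) then contradicts the strong irreducibility of the thick level of $W$ furnished by part (1). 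Equivalently, compressibility of $\partial_-W$ in $W$ forces weak reducibility of $S_j$, and then your part (1) untelescoping supplies the width reduction; but that intermediate implication is exactly the nontrivial step your sketch omits.
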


The converse of this theorem is not true in general. That is, a circular handle decomposition satisfying the three properties above need not be thin.

\begin{define}
A circular handle decomposition $\mathcal{D}$ is said to be \textit{locally thin} if the thin levels $F_i$ are incompressible and the thick levels $S_i$ are weakly incompressible.
\end{define}

A circular handle decomposition most resembles a Heegaard splitting when there is only one thick level and one thin level. We will focus on this case, which we call \textit{admissible}:

\begin{define}
Let $K\subset S^3$ be a knot with exterior $E(K)$, and let $\mathcal{D}$ be a circular handle decomposition of $E(K)$. We say that $\mathcal{D}$ is \textit{admissible} if $\mathcal{D} = \{(W;A,B)\}$; that is, $\mathcal{D}$ contains only one thick level $S$ and one thin level $F$. For an admissible decomposition, the pair $(F,S)$ will be called a \textit{circular Heegaard splitting of $E(K)$}.
\end{define}

\begin{remark}
If $K\subset S^3$ is a fibered knot, we can construct a circular Heegaard splitting $(F_0,F_1)$ of $E(K)$ using a parallel copy $F_1$ of the fiber surface $F_0$ and setting the thick level $S = F_1$. In this sense, fibered knots also admit circular Heegaard splittings even when the circle-valued Morse function has no critical points.
\end{remark}

%%%%%%%%%%%%%%%%%%%%%%%%%%%%%%%%%%%%%%%%%%%%%%%%%%%%%%%%%%%%%%%
%% Chapter 3 - Elementary Compressions and Circular Distance %%
%%%%%%%%%%%%%%%%%%%%%%%%%%%%%%%%%%%%%%%%%%%%%%%%%%%%%%%%%%%%%%%
\section{Elementary Compressions and Circular Distance}
\label{sec:CompsAndCircDist}

Let $M$ be a closed, orientable 3-manifold with a Heegaard splitting $(H_1,H_2;\Sigma)$. Suppose $M$ contains an essential, orientable surface $G$ of genus $g$. Hartshorn \cite{Hart} has shown that the distance of $\Sigma$ is bounded by twice the genus of $G$; that is, $d(\Sigma)\leq2g$. He uses what he calls an \textit{elementary compression} of $G\cap H_1$ into $H_2$ to define a sequence of isotopies of $G$ across $\Sigma$ that controls the distance of $\Sigma$. Since we will use these ideas extensively, we outline his argument below:

\begin{theorem}[\cite{Hart}]
\label{thm:Hart}
Let $M$ be a Haken 3-manifold containing an orientable incompressible surface of genus $g$. Then any Heegaard splitting of $M$ has distance at most $2g$.
\end{theorem}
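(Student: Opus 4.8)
The plan is to adapt the standard ``sweep-out plus general position plus elementary compression'' argument. Fix a sweep-out $h\colon M\to[0,1]$ induced by $\Sigma$, so that $h^{-1}(0)$ and $h^{-1}(1)$ are spines $\sigma_1,\sigma_2$ of the handlebodies $H_1,H_2$ and $h^{-1}(t)=\Sigma_t\cong\Sigma$ for every $0<t<1$. Isotope $G$ so that $h|_G$ is a Morse function with distinct critical values. Then for each regular value $t$ the intersection $G\cap\Sigma_t$ is a collection of disjoint simple closed curves, and as $t$ varies these curve systems change only by isotopy, by births and deaths of small trivial circles (at index-$0$ and index-$2$ critical points of $h|_G$), and by saddle moves (at index-$1$ critical points), where a saddle move replaces a curve system by one whose curves can be made disjoint from the old ones.

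First I would clean up the intersection. Whenever $G\cap\Sigma_t$ contains a curve inessential in $\Sigma_t$, an innermost such curve bounds a disk in $\Sigma_t$ meeting $G$ only along its boundary; since $G$ is incompressible that curve also bounds a disk in $G$, the two disks cobound a ball by irreducibility of $M$, and $G$ may be isotoped to remove the curve. After doing this I may assume every curve of $G\cap\Sigma_t$ is essential in $\Sigma_t$. Because $G$ is essential it cannot be isotoped into either handlebody (a handlebody has free fundamental group, while a genus $\ge 1$ closed surface group is not free), so $G\cap\Sigma_t\neq\emptyset$, and hence contains an essential curve, for every $t\in(0,1)$. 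For $t$ near $0$ the piece $G\cap h^{-1}([0,t])$ is a union of disks properly embedded in a regular neighborhood of $\sigma_1$ (hence in a copy of $H_1$), so the curves of $G\cap\Sigma_t$ are boundaries of compressing disks for $\Sigma_t$ in $H_1$; they represent vertices of the disk set $V_1\subset C(\Sigma)$. Symmetrically, for $t$ near $1$ the curves of $G\cap\Sigma_t$ represent vertices of $V_2$.

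I would then build a path in $C(\Sigma)$ from $V_1$ to $V_2$ by following $G\cap\Sigma_t$ as $t$ increases from $0$ to $1$: births and deaths of trivial circles and saddle moves between systems of essential curves each move the system a distance at most $1$ in $C(\Sigma)$, so concatenating over the critical values of $h|_G$ yields a path from a vertex of $V_1$ to a vertex of $V_2$. The genus bound enters in refusing to count every critical value: a saddle that does not alter the isotopy classes of the essential curves (or only creates or destroys inessential ones, which are discarded using incompressibility of $G$) costs nothing, and the number of transitions that genuinely change the essential-curve system is controlled by the complexity of the subsurfaces $G\cap h^{-1}([0,t])$ and $G\cap h^{-1}([t,1])$, whose Euler characteristics are bounded in terms of $\chi(G)=2-2g$. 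Carrying this accounting through gives a path of length at most $2g$, so $d(\Sigma)\le 2g$.

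The main obstacle is precisely this last accounting — the ``elementary compression'' bookkeeping that turns the sequence of saddle moves and clean-up isotopies into at most $2g$ distance-one steps in $C(\Sigma)$, while keeping the tracked curves essential in $\Sigma_t$ throughout. A secondary subtlety is that the clean-up isotopies for one level can spoil the configuration at nearby levels of the sweep-out, so in practice one fixes the worst-behaved level first and argues locally around it rather than trying to tidy the whole sweep-out at once.
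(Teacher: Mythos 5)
There is a genuine gap, and you identify it yourself: the entire content of the theorem is the bound $2g$, and your proposal asserts rather than proves it (``carrying this accounting through gives a path of length at most $2g$''). In the sweep-out picture nothing bounds the number of saddles of $h|_G$ in terms of $g$: even after the clean-up isotopies, $G$ can be positioned so that $h|_G$ has arbitrarily many index-$1$ critical points (the surface can wind through the product region $\Sigma\times(0,1)$ many times), and the subsurfaces $G\cap h^{-1}([0,t])$ can have unboundedly many components. So one needs a mechanism that both (a) selects at most $2g$ of the transitions as the ones that can actually advance the position in $C(\Sigma)$, and (b) guarantees that at each selected transition the curves being tracked are essential in the level surface. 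Appealing to ``the complexity of the subsurfaces'' does not supply this; it is precisely the hard step, and a further complication you also note (clean-up at one level spoiling nearby levels) means the levelwise picture cannot simply be tidied globally.

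The route Hartshorn takes, and the one this paper outlines and then adapts to the circular setting, replaces the sweep-out bookkeeping by a monotone quantity: isotope $G$ so that $G\cap H_1$ and $G\cap H_2$ are incompressible with all curves of $G\cap\Sigma$ essential in $\Sigma$ (your clean-up step, done once, not levelwise); use Kobayashi's result to arrange that $G\cap H_1$ contains exactly one essential disk; then repeatedly perform elementary ($\partial$-)compressions of $G\cap H_1$ into $H_2$. Each such move keeps the intersection incompressible with essential boundary (Lemma~\ref{lem:NiceComps} is the circular analogue), moves the curve system distance at most one in $C(\Sigma)$ (Lemma~\ref{lem:NiceDistances}), and raises $\chi(G\cap H_1)$ by exactly one (Lemma~\ref{lem:EulerChar}); since $\chi(G)=2-2g$ and the process starts from a single essential disk and stops as soon as an essential disk of $H_2$ appears in $G\cap H_2$, at most $2g$ moves occur, giving $d(\Sigma)\le 2g$. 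This Euler-characteristic monotonicity (or an equivalent counting device, such as the Cerf-theoretic graphic used by Scharlemann--Tomova) is the missing ingredient in your proposal; without it the argument stops exactly where the theorem begins.
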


\begin{proof}[Outline of proof:]
$ $\newline
\begin{enumerate}

	\item
	Isotope $G$ so that it intersects each $H_i$ ($i=1,2$) in incompressible, properly embedded components.

	\item
	Show that an elementary compression $G'$ of $G$ preserves the incompressibility of $G$ in $H_1$. Specifically, if each component of $G\cap H_1$, say, is incompressible in $H_1$, then so is each component of $G'\cap H_1$. We note that we may lose the incompressibility of some components of $G'\cap H_2$.
	
	\item
	Show that the distance in the curve complex $\mathcal{C}(\Sigma)$ between any curve in $G\cap\Sigma$ and any curve in $G'\cap\Sigma$ is at most one.
	
	\item
	Use a result of Kobayashi \cite{Kob} to isotope $G$ so that the components of $G\cap H_1$ contain exactly one essential disk.
	
	\item
	Use elementary compressions to produce a sequence of isotopies of $G$ such that the final embedding $G'$ is such that no component of $G'\cap H_1$ is a disk and that exactly one component of $G'\cap H_2$ is an essential disk. Performing these isotopies in such a minimal fashion bounds $d(\Sigma)$ by the number of elementary compressions in this sequence.
	
	\item
	Finally, show that the number of elementary compressions necessary for this kind of sequence of isotopies is no greater than $2g$.

\end{enumerate}

\end{proof}

We will adapt this method to circular Heegaard splittings of knot complements and the closed, essential surfaces they contain. We address the first three steps in this chapter while the last three steps are addressed in the next.

%%%%%%%%%%%%%%%%%%%%%%%%%%%%%%%%%%%%
%% Surfaces in Compression Bodies %%
%%%%%%%%%%%%%%%%%%%%%%%%%%%%%%%%%%%%
\subsection{Surfaces in Compression Bodies}

Let $K\subset S^3$ be a knot whose exterior $E(K)$ has a circular Heegaard splitting $(F,S)$. Suppose $G\subset E(K)$ is an essential, orientable surface of genus $g$. Consider its intersection $G\cap A$ with the compression body $A$. If there is a $\partial$-compressible component $G_0$ of $G\cap A$, then there exists a disk $D$ properly embedded in $A$ so that $D\cap A = \partial D = \alpha\cup\beta$, where $\alpha\subset G_0\cap A$ and $\beta\subset\partial A = (S\cup F)\cup(A\cap\partial E(K))$.

Suppose $\beta\cap\partial(E(K))=\emptyset$. Let $\eta(\alpha)\subset G_0$ be an open regular neighborhood of $\alpha$. Push $\overline{\eta(\alpha)}$ along $D$ and slightly into $B$. This effectively removes a (two-dimensional) 1-handle from $G_0$ and attaches a 1-handle in $B$ to $G_0$. The result is an isotopy of $G$. See Figure \ref{fig:ElementaryCompression}.

\begin{define}
The move just described is called a \textit{$\partial^*$-compression of $G$ from $A$ into $B$}. It is also known as an \textit{isotopy of type $A$} of $G$ across $S\cup F$.

If $G$ admits a $\partial^*$-compression from $A$ into $B$, we say $G$ is \textit{$\partial^*$-compressible from $A$ into $B$}.

If $G$ isn't $\partial^*$-compressible from $A$ into $B$, then we say $G$ is \textit{$\partial^*$-incompressible from $A$}.
\end{define}

\begin{remark}
Unless otherwise specified, a definition or claim about the compression body $A$ will also hold symmetrically for the compression body $B$.
\end{remark}

%%%%%%%%%%%%%%%%%%%%%%%%%%%%%%%%%%
%% Elementary Compression Image %%
%%%%%%%%%%%%%%%%%%%%%%%%%%%%%%%%%%
\begin{figure}[h]
\centering
\includegraphics[width=5in]{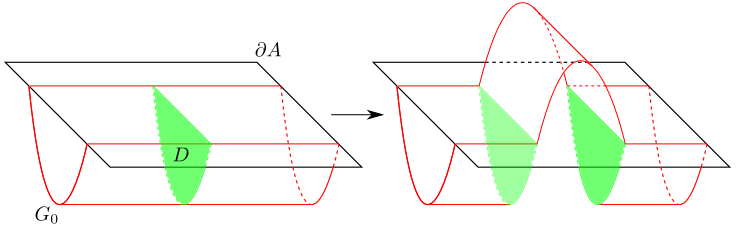}
\caption{A $\partial$-compression of $G$ from $A$ into $B$.}
\label{fig:ElementaryCompression}
\end{figure}

\begin{lemma}
\label{lem:EulerChar}
Suppose $G'$ is the result of a $\partial^*$-compression of $G$ from $A$ into $B$. Then $\chi(G'\cap A) = \chi(G\cap A) + 1$, and $\chi(G'\cap B) = \chi(G\cap B) - 1$.
\end{lemma}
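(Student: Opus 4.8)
The plan is to track exactly which pieces of surface are moved and to read off the change in Euler characteristic from elementary inclusion--exclusion. Recall the setup of the move: $\alpha$ is an arc properly embedded in the component $G_0$ of $G\cap A$, with $\partial\alpha\subset G_0\cap(S\cup F)$; the arc $\beta\subset S\cup F$ satisfies $\partial\beta=\partial\alpha$ and $\beta\cap\partial E(K)=\emptyset$; the disk $D\subset A$ is bounded by $\alpha\cup\beta$; and $\overline{\eta(\alpha)}\subset G_0$ is pushed along $D$ and slightly into $B$. The first step is to recognize $\overline{\eta(\alpha)}$ as a $2$-dimensional $1$-handle. Since $\alpha$ is properly embedded in the surface $G_0$, its closed regular neighborhood $\overline{\eta(\alpha)}$ in $G_0$ is a disk whose boundary is a union of four arcs: two ``long'' arcs parallel to $\alpha$, along which $\overline{\eta(\alpha)}$ meets $\overline{G_0\setminus\eta(\alpha)}$, and two ``short'' arcs, each contained in $\partial G_0\subset S\cup F$ near a point of $\partial\alpha$ and hence disjoint from the rest of $G$. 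So, inside $G\cap A$, the band $\overline{\eta(\alpha)}$ is glued to the remainder of $G\cap A$ along precisely two arcs.

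Next I would compute $\chi(G'\cap A)$. The $\partial^*$-compression deletes $\eta(\alpha)$ from the $A$-side, so $G'\cap A=\overline{(G\cap A)\setminus\eta(\alpha)}$. Decomposing $G\cap A=\overline{(G\cap A)\setminus\eta(\alpha)}\ \cup\ \overline{\eta(\alpha)}$ as a union glued along two arcs, each of Euler characteristic $1$, inclusion--exclusion gives
\[
\chi(G\cap A)=\chi\big(G'\cap A\big)+\chi\big(\overline{\eta(\alpha)}\big)-2=\chi\big(G'\cap A\big)+1-2,
\]
so $\chi(G'\cap A)=\chi(G\cap A)+1$, as claimed.

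For the $B$-side there are two equivalent routes. One may argue symmetrically: the pushed-in copy of $\overline{\eta(\alpha)}$ is attached to $G\cap B$ as a $2$-dimensional $1$-handle along two arcs, and attaching such a handle lowers the Euler characteristic by $1$. Alternatively --- and perhaps more transparently --- the $\partial^*$-compression is an isotopy of $G$, so $\chi(G)$ is unchanged; writing $\chi(G)=\chi(G\cap A)+\chi(G\cap B)-\chi\big(G\cap(S\cup F)\big)$ and noting that the move alters $G\cap(S\cup F)$ only by a band surgery along $\beta$ (the hypothesis $\beta\cap\partial E(K)=\emptyset$ guaranteeing that $\partial\beta$ consists of interior points of the $1$-manifold $G\cap(S\cup F)$, so that this is a genuine band move on curves), and that band surgery preserves the Euler characteristic of a $1$-manifold, we get $\chi(G\cap A)+\chi(G\cap B)=\chi(G'\cap A)+\chi(G'\cap B)$. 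Combining with the previous paragraph yields $\chi(G'\cap B)=\chi(G\cap B)-1$.

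The lemma is essentially bookkeeping once the band $\overline{\eta(\alpha)}$ is correctly identified, and the one point deserving care --- the main (minor) obstacle --- is confirming that the two ``short'' sides of that band are genuinely free, i.e. that they lie on $S\cup F$ and are not attached to further sheets of $G$, equivalently that the induced operation on the curves $G\cap(S\cup F)$ is an honest band surgery and nothing degenerate happens near $\partial E(K)$. This is exactly what the hypothesis $\beta\cap\partial E(K)=\emptyset$ in the definition of a $\partial^*$-compression provides; the remaining computations are the routine Euler-characteristic identities above.
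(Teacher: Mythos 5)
Your proof is correct and follows essentially the same route as the paper's: both amount to the bookkeeping that the band $\overline{\eta(\alpha)}$ (a disk attached along two arcs, i.e.\ an open disk plus two open edges in the paper's phrasing) is deleted from the $A$-side and re-attached on the $B$-side, giving $+1$ and $-1$ respectively. Your added care about the two free short sides of the band and the alternative conservation-of-$\chi(G)$ argument for the $B$-side are fine refinements of the same computation, not a different method.
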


\begin{proof}
Notice that, in the definition of $\partial^*$-compression, the removal of the neighborhood $\eta(\alpha)$ from $G_0$ removes an open disk and two edges from $G_0$. The net effect on $\chi(G_0)$ is an addition of 1. Similarly, for $G\cap B$, we have added that open disk and two edges to $G\cap B$. The net effect on $\chi(G\cap B)$ is the subtraction of 1.
\end{proof}

In light of the definition of distance of a Heegaard splitting in the previous chapter, we would like to omit any $\partial^*$-compressions that create inessential disk components. This happens only if $G\cap A$ has a $\partial$-parallel annulus component. However, since $\partial^*$-compressions avoid $\partial(E(K))$, this $\partial$-parallel annulus would be parallel into either $S$ or $F$ but not $\partial(E(K))$.

\begin{define}
Let $G\cap A$ contain an annular component $G_0$ that is parallel into either $S$ or $F$. This annulus will be called a \textit{$\partial^*$-parallel annulus}. A $\partial^*$-compression of $A_0$ will be called an \textit{annular compression from $A$ into $B$}.

Any $\partial^*$-compression of $G$ from $A$ into $B$ that is not an annular compression will be called an \textit{elementary compression}.
\end{define}

\begin{remark}
Unless otherwise stated, annular compressions of $G$ from $A$ into $B$ will always be followed by the isotopy carrying the resulting inessential disk across $S\cup F$. The net effect is to push the $\partial^*$-annulus out of $A$. In particular, $\chi(G\cap A) = \chi(G'\cap A)$ after an annular compression of $G$ from $A$ into $B$.
\end{remark}

The incompressible and $\partial$-incompressible surfaces in compression bodies have been classified \cite{BoOt}. These surfaces are $\partial$-parallel or they are essential disks or essential spanning annuli. We need a similar classification of incompressible and $\partial^*$-incompressible surfaces in the compression bodies defined by a circular Heegaard splitting of $E(K)$. This classification allows for one additional type of surface.

Recall that we can identify $A\cong(F\times I)\cup N$, where $N$ is a collection of 1-handles attached to $F\times\{1\}$. Choose an arc $\gamma$ properly embedded in $F\times\{1\}$ that is disjoint from the attaching disks for $N$. Thus, $\gamma\times I\subset A$ is a properly embedded disk $D$ in $A$ that is disjoint from $N$.

\begin{define}
The disk $D = \gamma\times I$ constructed above is called a \textit{product disk} of $A$. The arc $\gamma$ will be called a \textit{spanning arc} of $F$. We call $D$ \textit{inessential in A} if $\gamma$ is inessential in $F$. Otherwise, $D$ is said to be \textit{essential in $A$}. See Figure \ref{fig:ProductDisk}.

A product disk of $B$ is defined similarly by identifying $B\cong(S\times I)\cup T\cong(F\times I)\cup N'$, where $T$ is a collection of 2-handles and $N'$ is a collection of 1-handles dually equivalent to $T$.
\end{define}

%%%%%%%%%%%%%%%%%%%%%%%%
%% Product Disk Image %%
%%%%%%%%%%%%%%%%%%%%%%%%
\begin{figure}[h]
\centering
\includegraphics[width=5in]{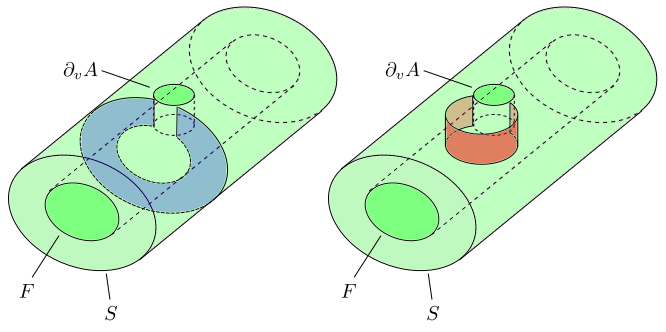}
\caption{A piece of $A$ showing an essential (blue) and inessential (red) product disk of $A$.}
\label{fig:ProductDisk}
\end{figure}

\begin{lemma}
\label{lem:DisksAndAnnuli}
Let $K\subset S^3$ be a knot whose exterior $E(K)$ has an admissible circular handle decomposition $\mathcal{D}$. Suppose $G$ is a compact, connected, non-$\partial$-parallel surface that is $\partial^*$-incompressible from $A$ and can be isotoped to be disjoint from $\partial_vA$. Then $G$ is either an essential disk or an essential spanning annulus.
\end{lemma}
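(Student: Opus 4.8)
The plan is to mimic the classification of incompressible, $\partial$-incompressible surfaces in a compression body (the theorem of Bonahon–Otal / Jaco cited above), but to carry it out in the compression body $A$ while only allowing $\partial^*$-compressions — i.e. $\partial$-compressions whose boundary arc $\beta$ avoids $\partial_v A = A\cap\partial E(K)$. The extra type of surface that appears (an essential product disk) is forced precisely because we are not permitted to use $\partial$-compressing disks that touch the vertical annulus. So the first step is to fix a minimal complete disk system $\Delta$ for $A$, realized as $\gamma_j\times I$ over a collection of disjoint essential arcs $\gamma_j$ in $F\times\{1\}$ (together with meridian disks of the 1-handles $N$), and to isotope $G$ to intersect $\Delta$ minimally, with $G$ disjoint from $\partial_v A$ by hypothesis.

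Next I would run the standard innermost-disk / outermost-arc argument on $G\cap\Delta$. Any circle of $G\cap\Delta$ that is innermost on a disk of $\Delta$ bounds a subdisk of that disk; since $G$ is incompressible (being $\partial^*$-incompressible and non-$\partial$-parallel, one first notes it is in fact incompressible in $A$ — compressions of $G$ in $A$ can be realized as $\partial^*$-compressions, or $G$ is a disk), this circle bounds a disk in $G$, and an innermost such configuration lets us isotope $G$ to reduce $|G\cap\Delta|$, contradicting minimality — unless $G$ is itself a disk isotopic into $\Delta$, i.e. an essential product disk (which here we are grouping with ``essential disk'', or is exactly the essential product disk of the statement). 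Assuming no circle intersections, consider the arcs of $G\cap\Delta$. An outermost such arc $\alpha$ on a disk $D_j\in\Delta$ cuts off a subdisk $D'\subset D_j$ with $\partial D' = \alpha\cup\beta'$, where $\beta'\subset\partial D_j$. The key point — and this is where the hypotheses do their work — is that $\partial D_j$ lies in $S\cup F$ together with possibly a small arc in $\partial_v A$; because $G$ is disjoint from $\partial_v A$, $\beta'$ can be taken in $S\cup F$, so $D'$ is a legitimate $\partial^*$-compressing disk for $G$ from $A$ into $B$. Since $G$ is $\partial^*$-incompressible from $A$, the arc $\alpha$ must be inessential in $G$, hence we can again isotope to reduce the intersection. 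Iterating, we reach $G\cap\Delta=\emptyset$.

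Finally, once $G$ is disjoint from $\Delta$, it lies in the complement $A\setminus\eta(\Delta)$, which is a disjoint union of 3-balls (one copy of $(F\setminus\eta(\text{arcs}))\times I$, capped). A compact connected surface properly embedded in a 3-ball with $\partial$ on its boundary 2-sphere, which is incompressible and $\partial$-incompressible relative to the allowed (i.e. $\partial^*$) disks and is not $\partial$-parallel, must be a disk; tracing this disk back through the gluing recovers $G$ as either an essential disk of $A$, an essential spanning annulus of $A$ (if its two boundary arcs in the ball reassemble into a core annulus over an essential arc of $F$ running $F\times\{0\}$ to $F\times\{1\}$), or an essential product disk — and the product disk case is the ``one additional type'' the remark promised, absorbed into ``essential disk'' in the statement as written. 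I expect the main obstacle to be the bookkeeping in this last step: carefully checking that, after cutting along $\Delta$, the pieces of $G$ in the resulting balls can only reassemble into the listed surfaces (in particular ruling out higher-genus or multi-boundary pieces by an Euler characteristic count, using $c(\ )$ as in the complexity definition, and ruling out $\partial$-parallel reassemblies by the non-$\partial$-parallel hypothesis on $G$), together with making sure at each stage that the $\partial^*$-compressing disks we produce genuinely avoid $\partial_v A$ — which is exactly why the disjointness-from-$\partial_v A$ hypothesis is essential and cannot be dropped.
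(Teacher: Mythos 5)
Your overall strategy (cut along a disk system, push $G$ off it, classify what remains) is the same family of argument as the paper's, but your specific route breaks at the cutting step. The paper's system $\Delta$ consists only of the co-cores of the $1$-handles, whose boundaries lie entirely in $S$; there every outermost arc of $G\cap\Delta$ cuts off a disk whose $\beta'$-arc lies in $S$, so the innermost-disk/outermost-arc argument really does yield either a $\partial^*$-compression or an intersection-reducing isotopy, and $G$ ends up in $V\cong F\times I$. You enlarge the system by product disks $\gamma_j\times I$, and this is exactly where the $\partial^*$ (as opposed to $\partial$) condition bites: a product disk meets $\partial_vA$ in the two vertical arcs $(\partial\gamma_j)\times I$, so when an outermost arc of $G\cap(\gamma_j\times I)$ runs from $\gamma_j\times\{0\}$ to $\gamma_j\times\{1\}$, \emph{every} choice of $\beta'\subset\partial(\gamma_j\times I)$ passes through $\partial_vA$. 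The cut-off disk is then not a $\partial^*$-compressing disk, so you get neither a contradiction with $\partial^*$-incompressibility nor a reduction of $|G\cap\Delta|$; your assertion that ``$\beta'$ can be taken in $S\cup F$'' is false in precisely this configuration. Moreover the configuration is unavoidable: an essential spanning annulus has boundary curve essential in $F$, and since the $\gamma_j$ cut $F$ into a disk, that curve must meet some $\gamma_j$, so the goal $G\cap\Delta=\emptyset$ is unattainable for one of the very surfaces you are trying to classify, and nothing in your argument shows that getting stuck forces $G$ to be a spanning annulus. That missing implication is the heart of the lemma; the paper supplies it after cutting only along the co-cores, by noting that $\pi_1(V,F)$ is trivial while the relative fundamental group $\pi_1(G_0,G_0\cap S)$ is non-trivial unless $G_0$ is a disk or spanning annulus, which produces an essential arc of $G_0$ cobounding a disk with an arc in $S$ --- a genuine $\partial^*$-compressing disk. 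Repairing your version would essentially force you back into that $F\times I$ analysis.

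Two smaller points. First, essential product disks cannot occur under the hypotheses of this lemma: a product disk has two boundary arcs on $\partial_vA$, while $G$ has been isotoped off $\partial_vA$; product disks are the ``additional type'' of Lemma \ref{lem:ProdDisks} (where $G$ cannot be pushed off $\partial_vA$), so they should not appear, absorbed or otherwise, in the conclusion here. Second, your justification that $G$ is incompressible (``compressions of $G$ in $A$ can be realized as $\partial^*$-compressions'') is not correct --- a compressing disk meets $G$ in a closed curve and has nothing to do with boundary arcs; incompressibility is an additional hypothesis (made explicit in Corollary \ref{cor:SurfsInCompBods}) that the paper's own proof also uses, for instance to rule out the case $G\cap\partial V=\emptyset$.
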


\begin{proof}
The proof is similar to \cite{BoOt}. We include it here for completeness.

First isotope $G$ so that $G\cap\partial_vA=\emptyset$. Let $G$ be some component of $G\cap A$. For the sake of contradiction, suppose that $G_0$ is neither an essential disk nor an essential spanning annulus. Let $\Delta$ be a minimal, complete disk system for $A$ (this can be taken to be the co-cores of the 1-handles in the set $N$ of $\mathcal{D}$). Through a standard innermost-disk/outermost-arc argument, we may isotope $G_0$ so that $G_0\cap\Delta = \emptyset$. Cutting $A$ along $\Delta$ yields a manifold $V\cong F\times I$.

If $(\partial V)\cap G = \emptyset$, then $G$ is closed. This would imply that $G$ is compressible; hence, we conclude that $(\partial V)\cap G\neq\emptyset$. Without loss of generality, let's say that $F\cap G\neq\emptyset$.

Because $G_0$ is not a spanning annulus, we know that $\pi_1(G_0,S\cap G_0,\star)$ is non-trivial for some basepoint $\star\in S$. We also observe that $\pi_1(V,F)$ is trivial. These observations together show that there must exist an essential arc $\alpha$ properly embedded in $S$ cobounding a disk $D$ with some essential arc $\beta$ in $G_0$. In particular, $G_0$ is $\partial$-compressible by another innermost-disk/outermost-arc argument. Using this disk, either it is a $\partial$-compressing disk for $G_0$ or we can use it to find such a disk. Since we assumed that $G_0\cap\partial_vA = \emptyset$, this $\partial$-compression may be taken to be a $\partial^*$-compression.

From this contradiction, we conclude that $G_0$ can only be an essential disk or an essential spanning annulus.
\end{proof}

%\begin{corollary}
%\label{cor:ThickElemComps}
%Let $K\subset S^3$ be a knot whose exterior $E(K)$ has an admissible circular handle decomposition $(F,S)$. Suppose that $G$ is a compact, connected, orientable, incompressible surface properly embedded in $A$ such that $G\cap\partial_vA=\emptyset$, $G\cap F\neq\emptyset$, and $G\cap S\neq\emptyset$. If $G$ is $\partial^*$-compressible from $A$ into $B$ along a curve in $F$, then $G$ is $\partial^*$-compressible from $A$ into $B$ along a curve in $S$.
%\end{corollary}

%\begin{proof}
%The conditions that $G\cap\partial_vA = \emptyset$, $G\cap F\neq\emptyset$, and $G\cap S\neq\emptyset$ state that $G$ has at least one boundary component embedded in $\partial A\backslash\partial_vA$ and none in $\partial_vA$. Since $G$ is assumed to be $\partial^*$-compressible from $A$ into $B$ along a curve in $F$.
%\end{proof}

\begin{lemma}
\label{lem:ProdDisks}
Let $K\subset S^3$ be a knot whose exterior $E(K)$ has an admissible circular handle decomposition $\mathcal{D}$. Suppose $G$ is a compact, connected, non-$\partial$-parallel surface that is $\partial^*$-incompressible from $A$ and cannot be isotoped to be disjoint from $\partial_vA$. Then $G$ is an essential disk.
\end{lemma}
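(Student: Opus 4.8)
The plan is to follow the proof of Lemma~\ref{lem:DisksAndAnnuli}, and through it the Bonahon--Otal classification of essential surfaces in a compression body \cite{BoOt}, while tracking how $G$ meets the vertical annulus $\partial_v A$; that intersection is exactly what forces the stronger conclusion here. As before, write $A\cong (F\times I)\cup N$ and let $\Delta$ be the minimal complete disk system of $A$ given by the co-cores of the $1$-handles of $N$, with the attaching disks of $N$ chosen in the interior of $S$ so that $\partial\Delta\subset\mathrm{int}(S)$ misses $\partial_v A$. We may also assume $G$ is incompressible in $A$, since $G\cap A$ is made incompressible before any application of the lemma. First I would isotope $G$ to minimize $|G\cap\Delta|$: innermost circles of $G\cap\Delta$ are removed using incompressibility of $G$ and irreducibility of $E(K)$ (Theorem~\ref{thm:Asphericity}), and an outermost arc of $G\cap\Delta$ on $\Delta$ cuts off a $\partial$-compressing disk for $G$ whose free arc lies in $\mathrm{int}(S)$ --- if the associated arc of $G$ is essential in $G$ this is a $\partial^*$-compression from $A$, a contradiction, and otherwise $|G\cap\Delta|$ can be reduced. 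So after isotopy $G\cap\Delta=\emptyset$, and cutting $A$ along $\Delta$ gives $V\cong F\times I$ containing $G$, with $\partial_v A$ reappearing as the vertical annulus $J=\partial F\times I\subset\partial V$; since $G$ cannot be isotoped off $\partial_v A$, we have $G\cap J\neq\emptyset$.

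Next I would put $G\cap J$ into a standard form inside the annulus $J$. Circles of $G\cap J$ bounding disks in $J$ are removed via incompressibility of $G$ and irreducibility. A circle of $G\cap J$ isotopic to the core of $J$ is a longitude of $K$; for nontrivial $K$ it bounds no disk in $E(K)$, so it is essential in $G$, and I would remove it by an isotopy of $G$ supported near the sub-annulus of $J$ that it cobounds with $\partial F$, lowering $|G\cap J|$ (for the unknot the hypotheses are vacuous, since $E(K)$ is a solid torus with no non-$\partial$-parallel properly embedded surfaces). Finally, an arc of $G\cap J$ that is inessential in the annulus $J$ cuts off a disk in $J$ whose free arc, pushed slightly off $\partial J$, lies in $\mathrm{int}(F)\cup\mathrm{int}(S)$, giving a $\partial^*$-compression of $G$, a contradiction. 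Hence every component of $G\cap J$ is a spanning arc of $J$, and at least one is present.

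To conclude that $G$ is a disk, I would run a second round of this argument against a minimal complete system $\Delta'$ of product disks of $A$, dual to a minimal complete arc system of $F$ whose endpoints avoid the finitely many points of $(G\cap J)\cap\partial F$; cutting $V$ along $\Delta'$ yields a ball. Minimizing $|G\cap\Delta'|$ while preserving the standard form of $G\cap J$, an outermost arc of $G\cap\Delta'$ whose cut-off disk has free arc in $\mathrm{int}(F)\cup\mathrm{int}(S)$ --- which holds unless that arc runs from one horizontal side of a $\gamma_i\times I$ to the other --- yields a $\partial^*$-compression or a reduction, both impossible. Hence only such ``vertical'' intersection arcs survive; cutting $G$ along them exhibits its pieces as disks in the ball $V\setminus\Delta'$, and reassembling (using the standard form of $G\cap J$ and an Euler-characteristic count) shows $G$ is a single product disk $\gamma_0\times I$. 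Since $G$ is non-$\partial$-parallel, $\gamma_0$ is essential in $F$, so $G$ is an essential disk, as claimed.

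The step I expect to be the main obstacle is the handling of $G\cap\partial_v A$ in the second paragraph --- removing longitudinal circle components, and more generally ensuring that every $\partial$-compressing disk produced along the way has its free arc off $\partial E(K)$, so that it really is a $\partial^*$-compression. This is precisely where the presence of the honest boundary $\partial E(K)$, which has no analogue in the closed compression-body setting of \cite{BoOt}, obstructs a verbatim transcription of that argument; the remaining steps are faithful adaptations of the Bonahon--Otal and Hartshorn-style reasoning.
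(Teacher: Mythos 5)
Your setup tracks the paper's proof of this lemma quite closely: isotope $G$ off the co-core disk system $\Delta$ so that it lies in $V\cong F\times I$, put $G\cap\partial_vA$ into a standard form of essential vertical spanning arcs, and then play a complete system of product disks against $\partial^*$-incompressibility. Two small inaccuracies in the standardization step: an inessential arc (or a trivial or core circle) of $G\cap\partial_vA$ lies in $\partial G$, and the disk it cuts off lies in $\partial E(K)$, so it does not furnish a $\partial^*$-compressing disk (whose arc $\alpha$ must be an essential properly embedded arc of $G$ and whose arc $\beta$ must lie in $S\cup F$); such components are instead removed by boundary isotopies that reduce $|G\cap\partial_vA|$, which is exactly how the paper phrases it (minimal intersection with $\partial_vA$). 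Also, the parenthetical claim that a solid torus contains no non-$\partial$-parallel properly embedded surfaces is false (meridian disks), though that degenerate case is immaterial here. Your added hypothesis that $G$ is incompressible is consistent with the paper, which uses it implicitly.

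The genuine gap is the endgame. From ``only vertical arcs of $G\cap\Delta'$ survive and the cut pieces are disks,'' together with an Euler-characteristic count, you cannot conclude that $G$ is a single product disk: gluing disks along boundary arcs realizes surfaces of arbitrary genus and number of boundary components with the same count $\chi(G)=(\#\,\text{pieces})-(\#\,\text{gluing arcs})$ --- for instance, a once-punctured torus cut along two disjoint essential vertical arcs is a single disk --- so this data does not pin down the topology of $G$, and you have not analyzed the gluing pattern. The paper closes the argument differently, and you need some form of that step: if $\chi(G)\le 0$, then either $G$ has positive genus, which yields an essential arc of $G$ with both endpoints in $S$, or $G$ is planar with at least two boundary components, which yields an essential arc joining distinct boundary components with both endpoints in $F$ or both in $S$; since $\pi_1(V,F\times\{0\})$ is trivial (equivalently, using the product structure of $V$ as in Lemma \ref{lem:DisksAndAnnuli}), such an arc cobounds a disk with an arc in $S$ (or $F$) avoiding $\partial_vA$, i.e.\ a $\partial^*$-compressing disk, contradicting $\partial^*$-incompressibility; hence $G$ is a disk, and it is essential because it is not $\partial$-parallel. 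With that substitution (or a genuinely complete analysis of how your disk pieces reassemble), your argument coincides with the paper's.
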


\begin{proof}
First isotope $G$ to intersect $\partial_vA$ in a minimal number of arcs. The intersection $\sigma = G\cap \partial_vA$ will be a collection of at least two parallel, essential spanning arcs of $\partial_vA$. Furthermore, we can isotope $\sigma$ to be vertical with respect to the identification $A\cong (F\times I)\cup N$. Specifically, each arc in $\sigma$ map be considered to be identified with $\{x\}\times I$ for some $x\in\partial F$. Unless otherwise specified, we now take all isotopies of $G$ to fix $\sigma$.

Now, as in the previous lemma, we may find a minimal, complete disk system $\Delta$ for $A$. We then use a standard innermost-disk/outermost-arc argument to isotope $G$ disjoint from $\Delta$ so that $G$ lies in a manifold $V\cong F\times I$.

Choose a collection of arcs $\Gamma$ properly embedded in $F\times\{1\}$ that are disjoint from the attaching disks of the 1-handles in $N$. Furthermore, choose $\Gamma$ so that $G\backslash\eta(\Gamma)$ is a disk. Observe that each arc $\gamma_i\in\Gamma$ is a spanning arc for some essential product disk $D_i$ of $A$. These resulting product disks $\mathcal{E}=\{D_i\}_{i=1}^N$ constitute a minimal complete (product) disk system for the handlebody $V$.

For sake of contradiction, we assume $G$ is not a product disk. Then $G$ is either (i) $\chi(G)\leq0$ or (ii) a disk that is not a product disk. Suppose $\chi(G)\leq0$ and that the genus of $G$ is at least one. Then we can choose an essential arc in $G$ with both endpoints in $S$. An argument similar to Lemma \ref{lem:DisksAndAnnuli} shows that there is a $\partial^*$-compression disk for $G$ from $A$. This contradiction implies that the genus of $G$ is zero.

In this case, $\partial G$ is comprised of at least two components; otherwise, we are in case (ii) above. Then $G$ contains an essential arc whose endpoints lie in different components of $\partial G$ and in the same surface, either $F$ or $S$. Once again, an argument similar to Lemma \ref{lem:DisksAndAnnuli} yields a $\partial^*$-compression disk for $G$ from $A$. Therefore, $G$ must have only one boundary component; that is, $G$ is a disk so that $\chi(G) = 1$. We conclude that $G$ can only be a disk.
\end{proof}

\begin{corollary}
\label{cor:SurfsInCompBods}
Let $K\subset S^3$ be a knot whose exterior $E(K)$ has an admissible circular handle decomposition $(F,S)$. The only compact, connected, orientable, properly embedded surfaces in the compression body $A$ that are not $\partial$-parallel, are incompressible, and are $\partial^*$-incompressible from $A$ are essential disks and essential spanning annuli.
\end{corollary}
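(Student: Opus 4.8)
The plan is to obtain Corollary~\ref{cor:SurfsInCompBods} directly from Lemmas~\ref{lem:DisksAndAnnuli} and~\ref{lem:ProdDisks}, using the dichotomy those two lemmas already set up. So let $G$ be a compact, connected, orientable, properly embedded surface in $A$ that is not $\partial$-parallel, is incompressible, and is $\partial^*$-incompressible from $A$.

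First I would record that $G$ is not closed. Since the thin level $F$ has nonempty boundary, $F\times I$ is a handlebody, and attaching the $1$-handles $N$ to $F\times\{1\}$ leaves $A$ a handlebody; the only closed incompressible surface in a handlebody is a $2$-sphere bounding a ball, which --- like the $\partial$-parallel surfaces --- is not among those the statement intends to classify. So $\partial G\neq\emptyset$, and this is the one place where the incompressibility hypothesis of the corollary is genuinely needed, even though it is not named in either lemma. Next I would split on whether or not $G$ can be isotoped to be disjoint from the vertical boundary $\partial_v A$. If it can, then $G$ meets all the hypotheses of Lemma~\ref{lem:DisksAndAnnuli}, so $G$ is an essential disk or an essential spanning annulus. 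If it cannot, then $G$ meets all the hypotheses of Lemma~\ref{lem:ProdDisks}, so $G$ is an essential disk (in fact an essential product disk, which necessarily meets $\partial_v A$). Since the two cases are exhaustive, the union of their conclusions is exactly the assertion of the corollary.

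There is no new geometric content here; everything substantive already lives in the two preceding lemmas. The only point requiring care --- and the nearest thing to an obstacle --- is the bookkeeping at the seam: confirming that the corollary's hypotheses (connected, orientable, properly embedded, non-$\partial$-parallel, incompressible, $\partial^*$-incompressible from $A$) do imply the slightly leaner hypothesis lists of Lemmas~\ref{lem:DisksAndAnnuli} and~\ref{lem:ProdDisks} --- connectedness, non-$\partial$-parallelism, and $\partial^*$-incompressibility from $A$ carry over verbatim, while orientability and proper embeddedness are only further restrictions --- and noting that ``essential product disk'' is subsumed under ``essential disk'' in the wording of the conclusion.
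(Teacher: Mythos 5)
Your proposal is correct and matches the paper's own (implicit) justification: the corollary is stated without separate proof precisely because it is the union of Lemma~\ref{lem:DisksAndAnnuli} and Lemma~\ref{lem:ProdDisks}, split exactly as you do on whether the surface can be isotoped off $\partial_vA$. Your additional remark ruling out closed surfaces is a reasonable piece of bookkeeping the paper leaves tacit, and it does not change the argument.
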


%%%%%%%%%%%%%%%%%%%%%%%%%%%%%%%%%%%%%%%%%
%% Behavior of Elementary Compressions %%
%%%%%%%%%%%%%%%%%%%%%%%%%%%%%%%%%%%%%%%%%
\subsection{Behavior of Elementary Compressions}

Recall the setting at the beginning of Hartshorn's argument. There is an incompressible surface $G$ of genus $g$ inside a 3-manifold $M$, and $M$ has a Heegaard splitting $(H_1,H_2,;\Sigma)$. Hartshorn established that $\partial$-compressions of $G\cap H_1$ from $H_1$ don't alter the incompressibility of the components of $G\cap H_1$. We show the same of $G\cap A$ and $G\cap B$ if the circular Heegaard splitting is circular locally thin.

Our first objective is to show that an essential surface $G\subset E(K)$ can be isotoped to intersect $S\cup F$ in curves that are essential in both $S$ and $F$. This is not immediately obvious due to the fact that $S$ is weakly incompressible.

\begin{lemma}
\label{lem:MinInt}
Let $K\subset S^3$ be a knot whose exterior $E(K)$ has a locally thin circular handle decomposition $\mathcal{D}$. Suppose that $E(K)$ contains a closed, orientable incompresible surface $G$ that intersects $S\cup F$ in a minimal number of curves. Then

\begin{enumerate}[(i)]

	\item
	$G\cap A$ and $G\cap B$ are incompressible in $A$ and $B$, respectively.
	
	\item
	$G\cap(S\cup F)$ is a collection of simple closed curves that are essential in $S\cup F$.
	
	\item
	there are no $\partial$-parallel annulus components of either $G\cap A$ or $G\cap B$. In particular, there are no $\partial^*$-parallel annuli in $G\cap A$ or $G\cap B$.

\end{enumerate}

\end{lemma}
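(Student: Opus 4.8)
The plan is to work with $G$ positioned as in the hypothesis --- transverse to $S\cup F$ and realizing the minimum of $|G\cap(S\cup F)|$ over its isotopy class --- and to extract all three conclusions from this minimality together with two facts: $G$ is incompressible and $E(K)$ is irreducible (Theorem~\ref{thm:Asphericity}). One preliminary simplification makes the bookkeeping clean: $G$ is closed, hence disjoint from $\partial E(K)$, so $G\cap\partial_vA=G\cap\partial_vB=\varnothing$ and every boundary curve of a component of $G\cap A$ or $G\cap B$ lies on $S\cup F$. (If $G\cap(S\cup F)=\varnothing$ then $G$ is a closed incompressible surface inside a single compression body, hence parallel to its $\partial_-$ face, and the conclusions hold trivially; so assume $G\cap(S\cup F)\neq\varnothing$.) I would prove the conclusions in the order (ii), (i), (iii).

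For (ii), an intersection curve that fails to be essential in $S\cup F$ either bounds a disk in $S\cup F$ or is boundary-parallel there. In the first case, an innermost-disk argument on $S\cup F$ produces a curve $c\subset G\cap(S\cup F)$ bounding a disk $D\subset S\cup F$ with $\mathrm{int}\,D\cap G=\varnothing$; incompressibility of $G$ then gives a disk $D'\subset G$ with $\partial D'=c$, so $D\cup D'$ is an embedded $2$-sphere, which bounds a $3$-ball by irreducibility of $E(K)$; after the usual innermost-disk adjustments, pushing $D'$ through this ball and just past $D$ is an isotopy of $G$ deleting $c$ (and possibly other curves) from $G\cap(S\cup F)$, contradicting minimality. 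In the second case, an outermost boundary-parallel intersection curve cobounds an annulus in $S\cup F$ with a component of $\partial(S\cup F)\subset\partial E(K)$ whose interior is disjoint from $G$; sliding that intersection curve across the annulus and off $\partial E(K)$ is again an isotopy lowering $|G\cap(S\cup F)|$. Both cases are excluded. Note that this argument uses only the incompressibility of $G$ and the irreducibility of $E(K)$ --- never any control over compressions of $S$ --- which is precisely why the mere weak incompressibility of $S$ poses no obstruction here.

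For (i): if a component $G_0$ of $G\cap A$ admitted a compressing disk $D\subset A$ (the case of $B$ being symmetric), then $\partial D\subset\mathrm{int}\,G_0$ is essential in $G_0$ and $\mathrm{int}\,D\cap G=\varnothing$. If $\partial D$ is essential in $G$, then $D$ is a compressing disk for $G$ in $E(K)$, contradicting incompressibility of $G$. Otherwise $\partial D$ bounds a disk $D'\subset G$; since $\partial D$ is essential in $G_0$, the disk $D'$ is not contained in $G_0$ and hence meets $S\cup F$, and exactly as in (ii) the sphere $D\cup D'$ bounds a ball across which we isotope $G$ to lower $|G\cap(S\cup F)|$, a contradiction. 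Hence $G\cap A$ and, symmetrically, $G\cap B$ are incompressible.

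For (iii): a $\partial$-parallel annulus component $G_0$ of $G\cap A$ has both boundary curves on $S\cup F$ and cobounds a product region $R$ with an annulus $A_\partial\subset\partial A$; choose $G_0$ so that $R$ is innermost. If $A_\partial\subset S\cup F$, pushing $G_0$ across $R$ and just past $A_\partial$ deletes the two curves $\partial G_0$ from $G\cap(S\cup F)$, contradicting minimality. If instead $A_\partial$ meets the vertical boundary $\partial_vA\subset\partial E(K)$, then $G_0$ is a spanning annulus parallel to $\partial_vA$ and its two boundary curves are boundary-parallel in $S$ and in $F$ respectively, so they were already removed while proving (ii). Either way no $\partial$-parallel annulus survives, and in particular no $\partial^*$-parallel annulus does. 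The main technical nuisance I anticipate is the routine but fiddly verification, in each of these three isotopy moves, that the relevant $3$-ball or product region meets $G$ in exactly the expected disk or annulus, so that the move is a genuine ambient isotopy strictly decreasing $|G\cap(S\cup F)|$; this is handled by the standard innermost-disk/minimality bookkeeping. The conceptual takeaway --- and the reason the weak incompressibility of $S$ does not derail the argument --- is that every contradiction is drawn solely from incompressibility of $G$, irreducibility of $E(K)$, or minimality of $|G\cap(S\cup F)|$, and never from any control over compressions of the thick level $S$; the incompressibility of the thin level $F$ supplied by local thinness is what will be needed later, in the analysis of elementary compressions, rather than here.
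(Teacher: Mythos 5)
Your overall strategy is the same as the paper's: all three conclusions are extracted from the incompressibility of $G$, the irreducibility of $E(K)$ (Theorem \ref{thm:Asphericity}), and the minimality of $|G\cap(S\cup F)|$, using the same three moves (push a disk of $G$ through the ball bounded by $D\cup D'$ for (i), swap/push a subdisk across a disk of $S\cup F$ for the null-homotopic case of (ii), and push a parallel annulus out of $A$ for (iii)). For those parts the proposal is fine and matches the paper.

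The step that fails is your second case of (ii), and with it the second case of (iii), which quotes it. If a curve $c\subset G\cap S$ is boundary-parallel in $S$, cobounding an annulus $A_0\subset S$ with a component of $\partial S$, you propose to ``slide the intersection curve across the annulus and off $\partial E(K)$.'' But $\partial S\subset\partial E(K)$ and $G$ is closed, hence confined to the interior of $E(K)$: as the sheet of $G$ through $c$ slides along $A_0$ toward $\partial S$, the intersection curve can only approach $\partial S$, never cross it, since at the moment of crossing $G$ would have to pass through the boundary torus. So this move is not an isotopy of $G$ in $E(K)$ and does not reduce $|G\cap(S\cup F)|$; unlike the closed Heegaard surface in Hartshorn's setting, there is no ``room beyond the edge'' of $S$ or $F$ to slide into. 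Consequently your argument does not rule out boundary-parallel intersection curves, and the second case of your (iii) (a spanning annulus whose parallelism region contains $\partial_vA$), which you dismiss by saying such curves ``were already removed while proving (ii),'' is likewise unsupported. Note that the paper's own proof handles only curves bounding disks in $\partial A$ (and in (iii) uses $G\cap\partial_vA=\emptyset$ to see the parallelism is into $S$ or $F$ alone), so disposing of genuinely boundary-parallel curves requires a different idea than the slide you describe; your proposal as written has a gap exactly there.
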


\begin{proof}
The proof is nearly identical to \cite{Hart}. We include the argument here for completeness.

\begin{enumerate}[(i)]

	\item
	Without loss of generality, assume that $G\cap A$ has a component compressible in $A$. Then there is a disk $D$ properly embedded in $A$ such that $\partial D$ doesn't bound a disk in $G\cap A$. Because $G$ is incompressible in $E(K)$, there is a second disk $D'$ in $G$ such that $\partial D' = \partial D$. Observe here that $D'\cap(S\cup F)\neq\emptyset$ since $\partial D$ doesn't bound a disk in $G\cap A$. Because $E(K)$ is irreducible, the disks $D$ and $D'$ cobound a ball in $E(K)$. We can then isotope $G$ so that $D'$ may be pushed through this ball and entirely out of $A$, into $B$, and off of $S\cup F$. This reduces the number of components in $G\cap(S\cup F)$, contradicting the assumption of minimality.
	
	\item
	Without loss of generality, suppose that $c$ is a curve in $G\cap\partial A$ that is inessential in $\partial A$. Then there is a disk $D$ in $\partial A$ with $\partial D = c$. Because $G$ is incompressible in $E(K)$, there is another disk $D'$ in $G$ such that $\partial D' = c$ as well.
	
	Consider the surface $G' = (G\backslash D')\cup D$. Since $E(K)$ is irreducible, we see that $G$ is actually isotopic to $G'$. Hence, we may push $D$ from $G'$ slightly off of $S\cup F$ to make $G'$ intersect $S\cup F$ in fewer components.
	
	\item
	Without loss of generality, assume that $A$ has a $\partial$-parallel annulus component $G_0$. Because $G$ is closed, $G\cap\partial E(K) = \emptyset$ so that $G\cap\partial_vA = \emptyset$. Hence, any $\partial$-parallel component is actually $\partial^*$-parallel. Therefore, any such component can be pushed out of $A$ and into $B$ so as to reduce the number of components in $G\cap(S\cup F)$.

\end{enumerate}

\end{proof}

Now that $G$ intersects $S\cup F$ in such a desirable fashion, we would like to show that elementary compressions preserve this structure. Moreover, we need to show that they exist in the first place.

\begin{lemma}
\label{lem:ElemComps}
Let $K\subset S^3$ be a knot whose exterior $E(K)$ has an admissible, locally thin circular handle decomposition $(F,S)$. Suppose that $G\subset E(K)$ is a closed, connected, orientable, incompressible surface. If $G$ intersects $S\cup F$ in a minimal number of curves and $\chi(G\cap A)<0$, then there is an elementary compression of $G\cap A$ from $A$ into $B$.
\end{lemma}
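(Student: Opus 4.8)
The plan is to argue by contradiction. Suppose $G\cap A$ admits no elementary compression from $A$ into $B$. Since $\chi(G\cap A)<0$, some component $G_0$ of $G\cap A$ has $\chi(G_0)<0$, and the goal is to derive a contradiction from the existence of this $G_0$.

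First I would assemble the easy facts about $G_0$. As $G$ is closed it misses $\partial E(K)$, so $G_0$ is disjoint from the vertical boundary $\partial_vA$; moreover $G_0$ has nonempty boundary, since a closed component would, by Lemma~\ref{lem:MinInt}(i), be an incompressible surface in the compression body $A$, necessarily of positive genus, which is impossible because $\pi_1(A)$ is free. Because $\chi(G_0)<0$, $G_0$ is neither a disk nor an annulus, so in particular it is not a $\partial^*$-parallel annulus; hence any $\partial^*$-compression of $G_0$ (which is a $\partial^*$-compression of $G\cap A$ from $A$ into $B$) would be an elementary compression. By the standing assumption there is none, so $G_0$ is $\partial^*$-incompressible from $A$.

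Next I would split into two cases. If $G_0$ is not $\partial$-parallel in $A$, then it is a compact connected surface, $\partial^*$-incompressible from $A$, that can be isotoped off $\partial_vA$, so Lemma~\ref{lem:DisksAndAnnuli} forces $G_0$ to be an essential disk or an essential spanning annulus --- contradicting $\chi(G_0)<0$. So suppose instead that $G_0$ is parallel to a subsurface $\widehat P\subseteq\partial A$, cobounding a product region $R\cong G_0\times I$ with $G_0=G_0\times\{0\}$ and $\widehat P$ a copy of $G_0$ in $\partial A$; a standard innermost-component reduction lets me take $R$ to meet $G$ only in $G_0$. Since $\chi(\widehat P)=\chi(G_0)<0$, $\widehat P$ contains an essential arc, and I claim one can be chosen disjoint from $\partial_vA$: if $\widehat P\cap\partial_vA=\emptyset$ then $\widehat P$ lies in $S$ or in $F$ and any essential arc works; otherwise $\widehat P=P_S\cup\partial_vA\cup P_F$ with $P_S\subseteq S$ and $P_F\subseteq F$, and the essentiality of the curves $G\cap(S\cup F)$ (Lemma~\ref{lem:MinInt}(ii)) together with the fact that $S$ and $F$ are not disks forces each of $P_S,P_F$ to be a nonempty surface which is neither a disk nor an annulus, so one of them carries an arc essential in $\widehat P$ and missing $\partial_vA$. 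For such an arc, realized as an essential arc $\alpha\subseteq G_0$, the disk $\alpha\times I$ in the product region $R$ is a $\partial$-compressing disk for $G_0$ whose boundary arc on $\partial A$ lies in $S\cup F$; this is a $\partial^*$-compression of $G_0$, contradicting the previous paragraph. In either case one reaches a contradiction, so $G\cap A$ must admit an elementary compression from $A$ into $B$.

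The step I expect to be the real obstacle is the $\partial$-parallel case, and within it the possibility that the subsurface of parallelism $\widehat P$ wraps around the vertical boundary $\partial_vA$. There the naive move --- pushing $G_0$ across $\partial A$ and slightly into $B$, which would also directly reduce $|G\cap(S\cup F)|$ --- is unavailable, because it would push part of $G$ out through $\partial E(K)$; one must instead manufacture a genuine $\partial^*$-compression by confining an essential arc of $\widehat P$ to its $S$- or $F$-portion, which is exactly where the essentiality of $G\cap(S\cup F)$ and the non-triviality of the levels $S,F$ get used. The remaining friction is the routine innermost-component bookkeeping needed to make $R$, and hence the $\partial$-compressing disk, disjoint from the rest of $G$.
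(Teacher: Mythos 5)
Your argument is correct, and its skeleton is the same as the paper's: assuming no elementary compression, the offending component $G_0$ (having $\chi(G_0)<0$, hence neither a disk nor an annulus, hence not a $\partial^*$-parallel annulus) must be $\partial^*$-incompressible, and Lemma \ref{lem:DisksAndAnnuli} then forces it to be an essential disk or essential spanning annulus, a contradiction. Where you genuinely differ is in making the $\partial$-parallel possibility an explicit case. The paper's proof passes from ``neither a disk nor an annulus'' to an application of Lemma \ref{lem:DisksAndAnnuli} without verifying that lemma's non-$\partial$-parallel hypothesis; minimality of $|G\cap(S\cup F)|$ (Lemma \ref{lem:MinInt}) only rules out $\partial$-parallel components whose region of parallelism misses $\partial_vA$, since those can be pushed across $S\cup F$, so a component parallel to a subsurface $\widehat{P}\supset\partial_vA$ of $\partial A$ is a residual case the paper elides. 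Your case (b) treats exactly this, and the mechanism is right: because the curves of $G\cap(S\cup F)$ are essential (Lemma \ref{lem:MinInt}(ii)) and $S$, $F$ each have a single boundary circle and are not disks (if $F$ were a disk, $K$ would be the unknot and $E(K)$ would contain no closed essential surface), the pieces $\widehat{P}\cap S$ and $\widehat{P}\cap F$ contain $\partial S$, $\partial F$ respectively and can be neither disks nor annuli, so the piece meeting $\partial G_0$ (note it may be only one of them, if $\partial G_0$ lies entirely in $S$ or entirely in $F$) carries an arc with endpoints on $\partial\widehat{P}$ that is essential in $\widehat{P}$, and the product disk over it is a $\partial^*$-compression, hence elementary. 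The two places you compress --- arranging the disk $\alpha\times I$ to meet $G$ only in $\alpha$ (innermost circles via incompressibility and irreducibility, outermost arcs possibly handing the compression to a different, non-annular component, which is harmless since Lemma \ref{lem:MinInt}(iii) excludes $\partial^*$-parallel annuli), and checking that the chosen arc is essential in $\widehat{P}$ rather than merely in its $S$- or $F$-piece (a cut-off disk cannot contain the complementary piece, which is not a disk) --- are routine and do not affect correctness. In short, your write-up buys a more complete treatment of the boundary-parallel case than the paper's own proof, at the cost of this extra bookkeeping.
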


\begin{proof}
Since $G$ is incompressible in $E(K)$, it follows that $G\cap\partial A\neq\emptyset$. By Lemma \ref{lem:MinInt}, $G\cap A$ is incompressible in $A$. If all components of $G\cap A$ are disks and annuli, then $\chi(G\cap A)\geq0$. Our assumption that $\chi(G\cap A)<0$ implies that there is some component of $G\cap A$ that is neither a disk nor an annulus. In particular, $G\cap A$ has a $\partial$-compressible component. If this $\partial$-compression isn't a $\partial^*$-compression, we can assume that this component is $\partial^*$-incompressible (otherwise, we are done). By Lemma \ref{lem:DisksAndAnnuli}, this component must be either a disk or an annulus. Neither such surface is $\partial$-compressible since no component of $G\cap A$ is a $\partial^*$-parallel annulus. We conclude then that there must be a $\partial^*$-compression of some component of $G\cap A$. Because this $\partial^*$-compression doesn't take place on a $\partial^*$-parallel annulus, it is an elementary compression of $G$ from $A$ into $B$. 
\end{proof}

\begin{lemma}
\label{lem:CompactElemComps}
Let $K\subset S^3$ be a knot whose exterior $E(K)$ has an admissible circular handle decomposition $(F,S)$. Suppose that $G$ is a compact, orientable, incompressible surface properly embedded in $A$ with no $\partial^*$-parallel components such that $G\cap\partial_vA$ has at most one component and that $G\cap\partial A\neq\emptyset$. If $\chi(G\cap A)<0$, then there is an elementary compression of $G\cap A$ from $A$ into $B$.
\end{lemma}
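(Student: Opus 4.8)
The plan is to transcribe the argument of Lemma \ref{lem:ElemComps} into the compact setting, with Corollary \ref{cor:SurfsInCompBods} taking the place of the classification of incompressible, $\partial$-incompressible surfaces in a compression body. The mechanism is the same: a surface in $A$ with negative Euler characteristic cannot be assembled entirely from the surfaces on that list (essential disks and essential spanning annuli), so one of its components must fail to be $\partial^*$-incompressible, and that failure is an elementary compression.

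First I would record that, as $G$ is incompressible in $A$, so is each of its components. Since a disk has Euler characteristic $1$ and an annulus has Euler characteristic $0$, the hypothesis $\chi(G)=\sum_j\chi(G_j)<0$ forces some component $G_0$ to be neither a disk nor an annulus, hence $\chi(G_0)\le -1$. Assuming for the moment that $G_0$ is non-$\partial$-parallel, Corollary \ref{cor:SurfsInCompBods} applies directly: if $G_0$ were in addition $\partial^*$-incompressible from $A$ it would be an essential disk or an essential spanning annulus, each of non-negative Euler characteristic, contradicting $\chi(G_0)\le -1$. Therefore $G_0$ is $\partial^*$-compressible from $A$ into $B$; and because $G_0$ is not an annulus, this $\partial^*$-compression is not carried out on a $\partial^*$-parallel annulus, so it is by definition an elementary compression of $G=G\cap A$ from $A$ into $B$.

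It remains to handle a $\partial$-parallel component $G_0$ with $\chi(G_0)\le -1$. If $G_0$ is parallel into $S$ or into $F$, then the product region it cuts off already contains a $\partial^*$-compressing disk: take an essential arc of $G_0$ (it is essential because $G_0$ is not a disk) and the disk it cobounds, across the product, with an arc of $S$ or $F$; as before, this compression is elementary since $G_0$ is not an annulus. If instead $G_0$ is $\partial$-parallel in a way that involves $\partial_v A$, this is where the hypothesis that $G\cap\partial_v A$ has at most one component is used. Following the minimal-position reasoning of Lemma \ref{lem:MinInt}: after an innermost-disk/outermost-arc cleanup (using incompressibility of $G$ in $A$ and irreducibility of $E(K)$) one may assume $G\cap\partial_v A$ consists of spanning arcs of the annulus $\partial_v A$, and since each spanning arc toggles the boundary of $G$ between the disjoint surfaces $S$ and $F$, these arcs occur in pairs; thus ``at most one component'' forces $G\cap\partial_v A=\emptyset$, so a $\partial$-parallel $G_0$ must be parallel into $S$ or $F$ and we are in the previous subcase.

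The step I expect to be the crux is this last one: pinning down exactly how the hypothesis on $\partial_v A$ prevents a negative-Euler-characteristic component from sitting in $A$ boundary-parallel to a subsurface of $\partial A$ that meets $\partial_v A$ (which would evade both Corollary \ref{cor:SurfsInCompBods} and the product-region trick), and making the arc-and-curve bookkeeping for $G\cap\partial_v A$ fully rigorous. Once that reduction is in place the rest is a routine adaptation of Lemma \ref{lem:ElemComps}.
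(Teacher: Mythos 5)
Your overall strategy is the paper's own: the paper proves this lemma in two lines by the contrapositive --- if $G$ admits no elementary compression then, since no component is a $\partial^*$-parallel annulus, Corollary \ref{cor:SurfsInCompBods} forces every component to be an essential disk or essential spanning annulus, whence $\chi(G\cap A)\geq 0$, contradicting the hypothesis. Your handling of the non-$\partial$-parallel components is exactly this, and your product-region argument for components parallel into $S$ or $F$ is fine --- indeed more careful than the paper, which does not separately address $\partial$-parallel components of negative Euler characteristic.

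The gap is in the step you flagged as the crux. Since $G$ is properly embedded in $A$, $G\cap\partial_vA$ is a union of components of, and subarcs of, $\partial G$, and the single allowed component need not be an arc: it can be a closed curve isotopic to the core of the annulus $\partial_vA$. Such a circle is not removed by an innermost-disk/outermost-arc cleanup (it is an essential curve in $\partial E(K)$, not a trivial intersection curve), and the parity count for spanning arcs says nothing about it; so the conclusion ``at most one component forces $G\cap\partial_vA=\emptyset$'' does not follow. Worse, this is precisely the configuration the hypothesis is designed to permit: in the paper's application (Lemmas \ref{lem:SeifertYes} and \ref{lem:SeifertNo}) the boundary of the second Seifert surface $F'$ is a longitude on $\partial E(K)$ that may lie inside $\partial_vA$ as exactly such a core circle. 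The repair is easy: a boundary circle of $G$ that is a core of $\partial_vA$ can be slid across $\partial_vA$ into $S$ (or $F$) by a proper isotopy of $G$, after which your earlier cases apply; alternatively, observe that Corollary \ref{cor:SurfsInCompBods} imposes no disjointness from $\partial_vA$ at all (Lemma \ref{lem:ProdDisks} covers surfaces that cannot be pushed off the vertical boundary), so for the non-$\partial$-parallel components the reduction to $G\cap\partial_vA=\emptyset$ was never needed; the vertical-boundary hypothesis only needs to be confronted for a putative $\partial$-parallel component of negative Euler characteristic whose region of parallelism runs over $\partial_vA$, and there one can still find an essential arc of the parallelism region lying in $S\cup F$.
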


\begin{proof}
If $G\cap A$ has no such elementary compression, then by \ref{cor:SurfsInCompBods} each of its components is either an essential disk or essential spanning annulus. But then we would have $\chi(G\cap A)\geq0$, and this contradicts our hypothesis.
\end{proof}

\begin{lemma}
\label{lem:NiceComps}
Let $K\subset S^3$ be a knot whose exterior $E(K)$ has an admissible circular handle decomposition $(F,S)$. Suppose that $G$ is a compact, orientable surface properly embedded in $E(K)$ so that $G\cap A$ is incompressible in $A$ and that each component of $G\cap(S\cup F)$ is essential in $\partial A$. Denote by $G'$ the embedding of $G$ after an elementary compression from $A$ into $B$. Then:

\begin{enumerate}[(i)]

	\item
	$G'$ is also incompressible in $A$.
	
	\item
	The components of $G'\cap A$ are also essential in $\partial A$.

\end{enumerate}

\end{lemma}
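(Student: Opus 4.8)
The plan is to argue both parts by contradiction, exploiting the fact that an elementary compression is realized by an isotopy of $G$ that pushes a regular neighborhood $\eta(\alpha)$ of an essential arc $\alpha \subset G_0 \subset G\cap A$ across the $\partial$-compressing disk $D$ and slightly into $B$, where $\partial D = \alpha \cup \beta$ with $\beta \subset (S\cup F) \setminus \partial_v A$. The key structural observation I would use throughout is that $G$ and $G'$ differ only inside a small ball $\eta(D)$: outside this ball they agree, and the curves of $G'\cap(S\cup F)$ are obtained from those of $G\cap(S\cup F)$ by a band-sum along $\beta$ (plus possibly a trivial circle that can be pushed off, which we discard). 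So the ``new'' curves of $G'\cap \partial A$ either lie among the old curves of $G\cap\partial A$ or arise from band-summing two old (essential) curves along an arc $\beta$ contained in a disk neighborhood of $\beta$ in $\partial A$.

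For (i), suppose some component $G_0'$ of $G'\cap A$ were compressible in $A$, say via a disk $D_1 \subset A$ with $\partial D_1$ essential in $G_0'$. I would push $D_1$ off the small ball $\eta(D)$ where the modification happened — this is possible after an isotopy since $D$ is a single properly embedded disk and $D_1$ can be chosen to meet the co-core of the ``tube'' transversally, then an innermost-disk argument on $D_1 \cap \eta(D)$ removes the intersections (any such intersection curve bounds a subdisk of $D_1$ inside the ball, hence is inessential). Once $D_1$ is disjoint from $\eta(D)$, its boundary lies on the part of $G$ that was unchanged, i.e. $\partial D_1 \subset G\cap A$. Then $\partial D_1$ either bounds a disk in $G\cap A$ or it doesn't; if it doesn't, this contradicts incompressibility of $G\cap A$ in $A$ directly, and if it does bound a disk $E \subset G\cap A$ that nonetheless meets the modified region, one checks that $E$ together with the isotopy can be used to show $\partial D_1$ also bounds a disk in $G'\cap A$, contradicting that it is essential in $G_0'$. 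This is the step I expect to be the main obstacle: carefully tracking which subsurface of $G'\cap A$ the curve $\partial D_1$ bounds, when the compressing disk's boundary straddles the band, requires a short cut-and-paste argument and an appeal to irreducibility of $E(K)$ (Theorem~\ref{thm:Asphericity}) to conclude that disks bounding the same curve cobound a ball.

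For (ii), suppose a component $c'$ of $G'\cap(S\cup F)$ were inessential in $\partial A$, bounding a disk $D_2 \subset \partial A$ (note $c'$ cannot be $\partial$-parallel into $\partial_v A$ since $\beta$, and hence all new curves, avoid $\partial_v A$ by the definition of elementary compression). If $c'$ is one of the old curves of $G\cap(S\cup F)$, this contradicts the hypothesis that those are essential in $\partial A$. Otherwise $c'$ is a band-sum of old essential curves $c_1, c_2$ along the arc $\beta$; but a band-sum of essential curves in a surface is essential unless the band is ``trivial'' (the two curves cobound an annulus on one side containing the band), in which case the band-sum is isotopic to one of the $c_i$ or to a curve parallel to $\partial A$ — and the former is essential by hypothesis while the latter is exactly the case already handled. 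I would formalize ``trivial band'' by observing that if $c'$ bounds a disk $D_2$ in $\partial A$, then by an innermost-circle argument we may assume $D_2$ meets $G\cap(S\cup F)$ only in the two arcs of $c_1, c_2$ that participate in the band-sum, forcing those arcs (hence, after a further reduction, $c_1$ or $c_2$) to be inessential — contradiction. Throughout, I will invoke Lemma~\ref{lem:EulerChar} only implicitly (the modification is local and genus-controlled) and will not need local thinness here, since the hypotheses already grant incompressibility of $G\cap A$ and essentiality of $G\cap(S\cup F)$; the lemma is thus a purely local, Hartshorn-style bookkeeping statement.
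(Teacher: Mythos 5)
Your part (i) is essentially the paper's argument: isotope the compressing disk for $G'\cap A$ off the trace of the compression, undo the compression, and contradict the incompressibility of $G\cap A$. The case you flag as the main obstacle in fact cannot occur: if $\partial D_1$ bounded a disk $E\subset G\cap A$, then since $\partial D_1$ is disjoint from $\partial(G\cap A)$ and the band $\eta(\alpha)$ has its ends on $\partial(G\cap A)$, the band lies entirely outside $E$, so $E$ survives into $G'\cap A$ and $\partial D_1$ was not essential there after all.

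Part (ii), however, has a genuine gap, in two places. First, you only treat the case in which $\beta$ joins two distinct curves of $G\cap(S\cup F)$. When both endpoints of $\beta$ lie on a single curve $c$, the compression \emph{splits} $c$ into two curves $c'$ and $c''$; this is not a band-sum of two old curves, and it is not ``a trivial circle that can be pushed off.'' Here essentiality of the old curves gives no contradiction by itself: the paper must use incompressibility of $G\cap A$, arguing that if $c''\subset D_2$ then $c$ is isotopic to $c'$ and hence inessential (contradiction), while if $c''$ lies outside $D_2$ then $D_2$ glued to the $\partial^*$-compressing disk along $\beta$ yields a compressing disk for $G\cap A$ with boundary $\alpha\cup\gamma\subset G$, contradicting incompressibility. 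Second, in the two-curve case your key claim --- that a band-sum of essential curves bounding a disk forces one of $c_1,c_2$ to be inessential --- is false: take $c_1,c_2$ parallel essential curves and $\beta$ an arc spanning the annulus between them; the band-sum bounds the disk (annulus minus band) even though both curves are essential. The correct way to close this subcase, and the reason the lemma is stated for \emph{elementary} rather than arbitrary $\partial^*$-compressions, is that this configuration forces the compressed component to be a $\partial^*$-parallel annulus, so the move would have been an annular compression rather than an elementary one. Your proof never invokes the elementary/annular distinction at all, and conclusion (ii) is genuinely false for annular compressions, so an argument that does not use that hypothesis cannot be complete.
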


\begin{proof}
This proof is nearly identical to that found in \cite{Hart}. We include it here for completeness.

\begin{enumerate}[(i)]

	\item
	Suppose $G$ is compressed in $A$ via the disk $D\subset A$ with boundary $\partial D = \alpha\cup\beta$, where $\alpha\subset G$ and $\beta\subset\partial A\backslash\partial_vA$. We can choose a neighborhood $D\times I$ of this disk in such a way that $(\partial D)\times I = (\alpha\cup\beta)\times I$ keeps $\alpha\times I\subset G$ and $\beta\times I\subset\partial A\backslash\partial_vA$. The elementary compression of $G$ along $D$ replaces $\alpha\times I$ with two disks $D\times(\partial I)$. Name these disks $D_0 = D\times\{0\}$ and $D_1 = D\times\{1\}$. We can then consider $D_0$ and $D_1$ as submanifolds of $G'$.
	
	Assume that $G'$ is compressible in $A$, and let $c\subset G'$ be the boundary curve of some compressing disk $D'\subset A$. We can isotope $c$ to be disjoint from $D_0$ and $D_1$ so that an innermost-disk argument moves $D'$ disjoint from $D_0$ and $D_1$ as well. Reversing the elementary compression from above, we can view $D'$ now as a compressing disk from $G$, thereby contradicting the incompressibility of $G\cap A$.
	
	\item
	Let $\Sigma = \partial A = S\cup F$. Suppose instead that there is a component $c'\subset G'\cap\Sigma$ that bounds a disk $D\subset\Sigma$. Then $c'$ must come from an elementary compression of $G$ from $A$ into $B$; otherwise, $c'$ is an inessential curve in $G\cap\Sigma$ and we contradict our assumption that $G\cap(\Sigma)$ was a collection of curves essential in $\Sigma$. Thus, we take $\beta\subset\Sigma$ to be the defining arc of this $\partial^*$-compression. The proof now proceeds by cases dependent on the number of components of $G\cap\Sigma$ that $\beta$ joins.
	
	\textit{One component} - If $\beta$ joins only one component of $G\cap\Sigma$, the compression breaks $c$ into two components. One of these components is the curve $c'$ from above, and we name the other curve $c''$. Either the disk $D$ contains $c''$ or it doesn't.
	
	We first assume that $c''\subset D$. Then $c''$ itself must bound a disk in $D$. Hence, $c'$ is isotopic to the original curve $c$ so that $c$ is inessential in $\Sigma$. This contradiction implies that $c''$ lies outside of the disk $D$. If this is the case, we can isotope $D$ so that its boundary can be decomposed into two arcs in $\Sigma$: the arc $\beta$ from the elementary compression and the arc $\gamma = \overline{c'\backslash\beta}$. If $D'$ is the disk realizing the compression of $G\cap \Sigma$ with $\partial D' = \alpha\cup\beta$ ($\alpha\in G$), then the union $D'' = D\cup D'$ is a disk such that $\partial D'' = \alpha\cup\gamma\subset G$. If we push the interior of $D''$ slightly into $A$, then $D''$ constitutes a compressing disk for $G$ in $A$. This contradicts the incompressibility of $G$ in $A$. See Figure \ref{fig:NiceCompressions1Component}.
	
%%%%%%%%%%%%%%%%%%%%%%%%%%%%%%%%%%%%%%%%%
%% Nice Compressions 1 Component Image %%
%%%%%%%%%%%%%%%%%%%%%%%%%%%%%%%%%%%%%%%%%
\begin{figure}[h]
\centering
\includegraphics[width=5in]{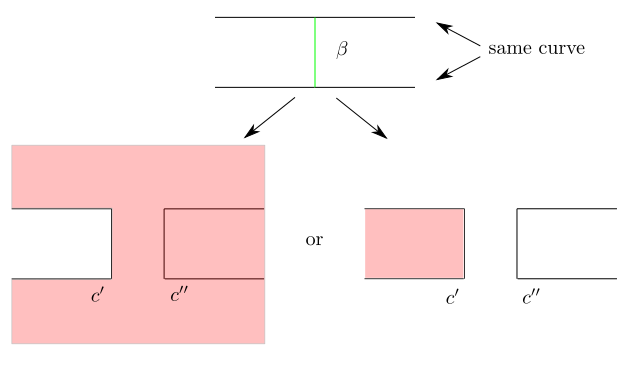}
\caption{The local result of compressing $G$ along the curve $\beta$ connecting a single component of $\partial G$. The disk $D$ mentioned in the proof of Lemma \ref{lem:NiceComps} is shaded red.}
\label{fig:NiceCompressions1Component}
\end{figure}
	
	\textit{Two components} - If $\beta$ instead joins two components $c_0, c_1$ of $G\cap\Sigma$, then observe that $c'$ is the only curve of $G'\cap\Sigma$ affected by the compression. There are two ways that $c'$ can bound $D$: either $D$ contains $\beta$ or it doesn't.
	
	If $D$ contains $\beta$, then reversing the compression pinches $D$ into two disks with boundaries $c_0$ and $c_1$, respectively. Hence, both $c_0$ and $c_1$ are contained inside a disk and themselves bound disks. This contradicts our assumption that $c_0$ and $c_1$ were both essential in $\Sigma$.
	
	If $\beta$ lies outside $D$, then reversing the compression is the same as gluing a (two-dimensional) 1-handle onto $D$, thereby creating two boundary components (namely, $c_0$ and $c_1$). That is, $D$ is converted into an annulus. From our arguments in (i) above, this annulus must be inessential in $A$ so that it is $\partial$-parallel. Hence, the $\partial^*$-compression must have been used instead for an annular compression rather than our assumed elementary compression. See Figure \ref{fig:NiceCompressions2Components}.
	
%%%%%%%%%%%%%%%%%%%%%%%%%%%%%%%%%%%%%%%%%%
%% Nice Compressions 2 Components Image %%
%%%%%%%%%%%%%%%%%%%%%%%%%%%%%%%%%%%%%%%%%%
\begin{figure}[h]
\centering
\includegraphics[width=5in]{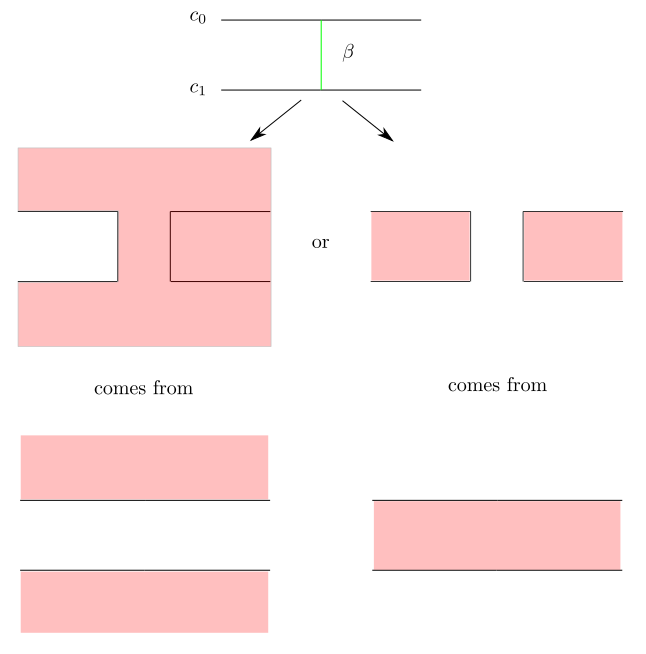}
\caption{The local result of compressing $G$ along the curve $\beta$ connecting two components $c_0,c_1\subset\partial G$. The disk $D$ mentioned in the proof of Lemma \ref{lem:NiceComps} is shaded red.}
\label{fig:NiceCompressions2Components}
\end{figure}

\end{enumerate}

\end{proof}

%%%%%%%%%%%%%%%%%%%%%%%
%% Circular Distance %%
%%%%%%%%%%%%%%%%%%%%%%%
\subsection{Circular Distance}

The necessity of the requirement that the intersections $G\cap(S\cup F)$ be essential in $S\cup F$ stems from our utilization of the curve complex of $S\cup F$. In the sequence of isotopies that we eventually create, we need to be able to compare intersection curves from one term of the sequence to the intersection curves of the next. The following lemma makes this idea more precise.

\begin{lemma}
\label{lem:NiceDistances}
Let $K\subset S^3$ be a knot whose exterior $E(K)$ has an admissible circular handle decomposition $(F,S)$. Suppose that $G$ is a compact, orientable surface properly embedded in $E(K)$. Denote by $G'$ the embedding of $G$ after a $\partial^*$-compression from $A$ into $B$, and let $c\subset G\cap(S\cup F)$ and $c'\subset G'\cap(S\cup F)$ be curves essential in $S\cup F$. Then $d_\mathcal{C}(c,c')\leq 1$.
\end{lemma}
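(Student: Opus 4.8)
The plan is to realize the $\partial^{*}$-compression concretely, read off its effect on the intersection of $G$ with $S\cup F$, and then observe that the ``new'' curves are essentially disjoint from the ``old'' ones. First I would unpack the definition: a $\partial^{*}$-compression of $G$ from $A$ into $B$ is carried out along a disk $D$ properly embedded in $A$ with $\partial D=\alpha\cup\beta$, where $\alpha=D\cap G$ is an arc in $G$, where $\beta=D\cap\partial A$ satisfies $\beta\cap\partial E(K)=\emptyset$ and hence $\beta\subset S\cup F$, and where $\alpha\cap\beta=\partial\alpha=\partial\beta$. Being connected, $\beta$ lies entirely in $S$ or entirely in $F$. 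Since $D\cap G=\alpha$ and $\alpha$ meets $\beta$ only at $\partial\beta$, the arc $\beta$ meets the $1$-manifold $G\cap(S\cup F)$ exactly in its two endpoints; let $c_{1},c_{2}$ (possibly $c_{1}=c_{2}$) be the components of $G\cap(S\cup F)$ containing $\partial\beta$.

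Next I would track the move. The $\partial^{*}$-compression replaces a neighborhood $\eta(\alpha)\subset G$ by two parallel copies of $D$ pushed slightly into $B$; on $S\cup F$ this leaves every component of $G\cap(S\cup F)$ other than $c_{1},c_{2}$ untouched and replaces $c_{1}\cup c_{2}$ by the curve(s) $\delta'$ obtained from $c_{1}\cup c_{2}$ by band surgery along $\beta$ (the saddle move determined by the band $\eta(\beta)\subset S\cup F$). Thus $G'\cap(S\cup F)=\bigl(G\cap(S\cup F)\setminus(c_{1}\cup c_{2})\bigr)\cup\delta'$.

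The geometric core is then to see that $\delta'$ can be isotoped in $S\cup F$ to be disjoint from every component of $G\cap(S\cup F)$. For this, take a closed regular neighborhood $N$ of $c_{1}\cup c_{2}\cup\beta$ in $S\cup F$, chosen small enough to miss every component of $G\cap(S\cup F)$ other than $c_{1},c_{2}$; then $N$ contains $c_{1}\cup c_{2}$ in its interior and contains $\delta'$. Since $S\cup F$ is orientable and $\chi(N)=\chi(\mathrm{circle}\cup\mathrm{arc})=-1$, the surface $N$ is a pair of pants or a once-punctured torus, and in either case each component of $\delta'$ is isotopic within $N$ to a boundary component of $N$ and so can be pushed off $c_{1}\cup c_{2}$. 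Together with the fact that the unchanged curves of $G\cap(S\cup F)$ lie outside $N$, this gives the claim.

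Finally I would assemble the conclusion. Let $c\subset G\cap(S\cup F)$ and $c'\subset G'\cap(S\cup F)$ be essential in $S\cup F$. If $c'$ is not a component of $\delta'$, it is an unchanged curve, so $c$ and $c'$ are equal or are disjoint components of the embedded $1$-manifold $G\cap(S\cup F)$; if $c'$ is a component of $\delta'$, it is disjoint from all of $G\cap(S\cup F)$ by the previous paragraph, in particular from $c$. In every case $c$ and $c'$ admit disjoint representatives on $S\cup F$, so $d_{\mathcal{C}}(c,c')\le1$ (with equality to $0$ exactly when they are isotopic); here one uses that $c$ and $c'$ lie in a common component of $S\cup F$, which is the only situation arising in the applications of this lemma. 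I expect the single step requiring genuine care to be the pair-of-pants/once-punctured-torus analysis establishing that $\delta'$ is disjoint from $c_{1}\cup c_{2}$; everything else is bookkeeping built on the one fact that $\beta$ meets $G$ only along $\partial\beta$.
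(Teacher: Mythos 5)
Your band-surgery setup follows the same strategy as the paper's proof (which in turn follows Hartshorn): unchanged curves of $G\cap(S\cup F)$ stay disjoint from everything, and the new curves are analyzed locally near $c_1\cup c_2\cup\beta$. The gap is in the one-component case, exactly the step you flag as the geometric core. When $\beta$ has both endpoints on a single curve $c$ and the band $\eta(\beta)$ approaches $c$ from \emph{opposite} sides in $S\cup F$, the neighborhood $N$ is a once-punctured torus, and there your claim fails: the band surgery then produces a single curve $\delta'$ that is not isotopic in $N$ to $\partial N$ (which is null-homologous in $N$, while $\delta'$ carries the class of $c$ mod $2$), and a direct model computation on the torus shows $\delta'$ has geometric intersection number $2$ with $c$, so it cannot be pushed off $c$ at all. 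If this configuration could arise, the conclusion $d_\mathcal{C}(c,\delta')\le1$ would simply be false, so the case must be ruled out rather than absorbed into the pair-of-pants argument.

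What rules it out is the orientability (two-sidedness) of $G$, which your proposal never uses. Since $D\cap G=\alpha$, the disk $D$ lies on one side of $G$ along all of $\alpha$, and near $c$ the two local sides of $c$ in $S\cup F$ correspond to the two sides of $G$; hence $\beta$ leaves $c$ on the same side at both endpoints, $N$ is a pair of pants, and the surgery splits $c$ into two curves, each isotopic to a boundary component of $N$. (Equivalently: $G'\cap A$ is $G\cap A$ cut along $\alpha$, and cutting an orientable surface along an arc with both endpoints on one boundary circle always splits that circle into two boundary circles, so the surgery must yield two curves.) Alternatively, you can sidestep the case analysis the way the paper does: take a normal push-off $\mathcal{G}$ of $G$, chosen disjoint from $G$ and from $D$; then $G'$ remains disjoint from $\mathcal{G}$, so every curve of $G'\cap(S\cup F)$ is disjoint from the parallel copy $c_\epsilon$ of $c$, giving $d_\mathcal{C}(c,c')\le1$ with no neighborhood analysis. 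Your two-component case and the surrounding bookkeeping, including the same-component caveat about the convention $d_\mathcal{C}=\infty$, are fine.
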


\begin{proof}
The argument again follows very closely to \cite{Hart}. We include it here for completeness.

Call $\Sigma = S\cup F$. Let $D$ be the disk that realizes the $\partial^*$-compression with $\partial D = \alpha\cup\beta$, where $\alpha\subset G\cap A$ and $\beta\subset\Sigma$.

If the $\partial^*$-compression is annular, then $c$ is entirely removed from $S\cup F$ as a result. Hence, any $c'$ chosen from $G'\cap\Sigma$ must be disjoint from $c$ so that $d_\mathcal{C}(c,c') = 1$. We now assume that the $\partial^*$-compression is, in fact, an elementary compression of $G$ from $A$ into $B$.

If $c$ is not affected by the compression, then $c$ can be made disjoint from $c'$ so that $d_\mathcal{C}(c,c')\leq1$. Therefore, we assume that $c$ is indeed affected by the compression. We proceed by cases dependent on whether the arc $\beta$ joins one component or two different components of $G\cap\Sigma$. 

\textit{Two components} - If $\beta$ joins two components $c_0,c_1$ of $G\cap\Sigma$, then we can choose closed collar neighborhoods $\eta_0$ and $\eta_1$, respectively, so that $\beta\cap\text{int}(\eta_0\cup\eta_1) = \emptyset$. Let $\hat c_0$ and $\hat c_1$ be the boundary components of $\eta_0$ and $\eta_1$, respectively, where $\beta\cap(\hat c_0\cup\hat c_1) = \emptyset$. Then $\hat c_0$ is isotopic to $c_0$ and $\hat c_1$ is isotopic to $c_1$, and the curve $\delta$ resulting from the compression is disjoint from both $\hat c_0$ and $\hat c_1$. Hence, because $c$ is either $c_0$ or $c_1$, any component $c'$ of $G'\cap\Sigma$ can be chosen so that it is disjoint from $c$. Therefore, $d_\mathcal{C}(c,c')\leq1$. See Figure \ref{fig:NiceDistances2Components}.

%%%%%%%%%%%%%%%%%%%%%%%%%%%%%%%%%%%%%%%
%% Nice Distances 2 Components Image %%
%%%%%%%%%%%%%%%%%%%%%%%%%%%%%%%%%%%%%%%
\begin{figure}[h]
\centering
\includegraphics[width=5in]{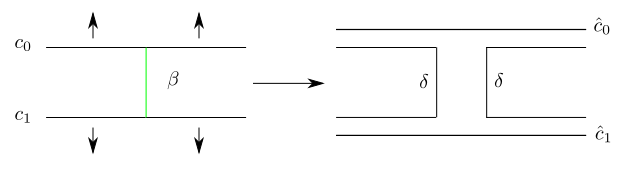}
\caption{The local result of compressing $G$ along the curve $\beta$ connecting two components $c_0,c_1\subset\partial G$.}
\label{fig:NiceDistances2Components}
\end{figure}

\textit{One component} - If $\beta$ joins the same component $c$ of $G\cap\Sigma$, then we consider a normal push-off $\mathcal{G}$ of $G$ so that $G\cap\mathcal{G} = \emptyset$. In particular, $\mathcal{G}$ can be chosen so that it is disjoint from the $\partial^*$-compression disk $D$. Denote by $c_\epsilon\subset\mathcal{G}\cap\Sigma$ the image of $c$ under the push-off.

We then perform the elementary compression of $S$ from $A$ into $B$. The curve $c$ is thereby pinched into the pair of curves $c'_0,c'_1\subset G'\cap\Sigma$, both of which are now disjoint from $c_\epsilon$. Hence, $c$ is isotopic to a curve that is disjoint from any component $c'$ of $G'\cap\Sigma$. Thus, $d_\mathcal{C}(c,c') = 1$. See Figure \ref{fig:NiceDistances1Component}.

%%%%%%%%%%%%%%%%%%%%%%%%%%%%%%%%%%%%%%
%% Nice Distances 1 Component Image %%
%%%%%%%%%%%%%%%%%%%%%%%%%%%%%%%%%%%%%%
\begin{figure}[h]
\centering
\includegraphics[width=5in]{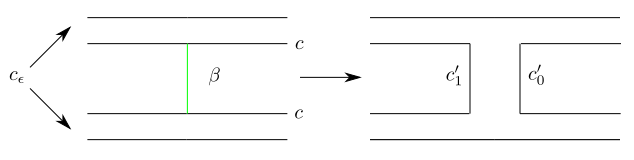}
\caption{The local result of compressing $G$ along the curve $\beta$ connecting two components $c_0,c_1\subset\partial G$.}
\label{fig:NiceDistances1Component}
\end{figure}

\end{proof}

\begin{define}
\label{def:CircDist}
Let $K\subset S^3$ be a knot whose exterior $E(K)$ has an admissible circular handle decomposition $(F,S)$. Denote by $\Gamma_A$ the set of all essential disks and essential spanning annuli of $A$. Similarly, define the set $\Gamma_B$ for $B$.

For the circular Heegaard splitting $(F,S)$, we define its \textit{circular distance} to be
\[
cd(F,S) = \min\{d_\mathcal{C}(\partial_S\alpha,\partial_S\beta) + d_\mathcal{C}(\partial_F\alpha,\partial_F\beta) \ \mid \ \alpha\in\Gamma_A, \beta\in\Gamma_B\}.
\]
and the \textit{thick distance of $S$} to be
\[
td(S) = \min\{d_\mathcal{C}(\partial_S\alpha,\partial_S\beta) \ \mid \ \alpha\in\Gamma_A,\beta\in\Gamma_B\}.
\]
\end{define}

In the case of a fibered knot $K\subset S^3$, we immediately find $td(S) = 0$ for any circular Heegaard splitting $(F,S)$ of $E(K)$. This is realized by a vertical, essential annulus $\mathcal{A}$ in $E(K)\backslash\eta(F_0)$. This annulus is cut by $S$ into a pair of essential spanning annuli $\alpha\subset A$ and $\beta\subset B$ with $\partial_S\alpha = \partial_S\beta$. Hence, the only non-zero contribution to the circular distance $cd(F,S)$ comes from its thin level $F$.
	
From a theorem of Johnson's \cite{Jesse}, we know that $d_\mathcal{C}(\partial_F\alpha,\partial_F\beta)\leq4$. Hence, this annulus $\mathcal{A}$ also gives an upper bound to any circular Heegaard splitting $(F,S)$ of $E(K)$ of a fibered knot:
	
\begin{lemma}
\label{lem:FiberedDistance}
Let $K\subset S^3$ be a fibered knot and $(F,S)$ be a circular Heegaard splitting for its exterior $E(K)$. Then $cd(F,S)\leq4$.
\end{lemma}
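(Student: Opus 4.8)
The plan is to establish the bound by producing a single competitor in the minimum that defines $cd(F,S)$. Since $K$ is fibered, $F$ is isotopic to a fiber $F_0$ and $S$ to a parallel fiber, so $E(K)$ is the mapping torus of the monodromy $\phi\colon F_0\to F_0$; cutting $E(K)$ open along $F$ gives $E(K)\backslash\eta(F_0)\cong F_0\times I$, and this product is exactly $A\cup_S B$, with $A$ and $B$ the two sub-products lying on the two sides of $S$. Under this identification the two boundary copies of $F_0$ in $F_0\times I$ are the two sides of $F$ in $E(K)$, re-glued by $\phi$. It therefore suffices to exhibit essential spanning annuli $\alpha\in\Gamma_A$ and $\beta\in\Gamma_B$ with $d_\mathcal{C}(\partial_S\alpha,\partial_S\beta)+d_\mathcal{C}(\partial_F\alpha,\partial_F\beta)\le 4$.

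First I would invoke the cited theorem of Johnson \cite{Jesse} to fix an essential simple closed curve $c\subset F_0$ with $d_\mathcal{C}(c,\phi(c))\le 4$, and then form the vertical annulus $\mathcal{A}=c\times I\subset F_0\times I=E(K)\backslash\eta(F_0)$. Cutting $\mathcal{A}$ along $S$ produces $\alpha=\mathcal{A}\cap A$ and $\beta=\mathcal{A}\cap B$; since $c$ is essential in $F_0$, each is an essential spanning annulus, so $\alpha\in\Gamma_A$ and $\beta\in\Gamma_B$. By construction $\partial_S\alpha=\mathcal{A}\cap S=\partial_S\beta$, so $d_\mathcal{C}(\partial_S\alpha,\partial_S\beta)=0$; this is exactly the observation that $td(S)=0$. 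The two remaining boundary curves $\partial_F\alpha$ and $\partial_F\beta$ are the two components of $\partial\mathcal{A}$, lying on the two sides of $F$; once these sides are re-glued by the monodromy they become the curves $c$ and $\phi(c)$ on $F$, so by our choice of $c$ we get $d_\mathcal{C}(\partial_F\alpha,\partial_F\beta)=d_\mathcal{C}(c,\phi(c))\le 4$. As $(\alpha,\beta)$ is a legitimate pair in the minimum defining $cd(F,S)$,
\[
cd(F,S)\le d_\mathcal{C}(\partial_S\alpha,\partial_S\beta)+d_\mathcal{C}(\partial_F\alpha,\partial_F\beta)\le 0+4=4.
\]

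The substantive input is the cited curve-complex bound of Johnson relating a fiber curve to its image under the monodromy; everything else is bookkeeping. The point that requires care is the identification of $\partial_F\alpha$ and $\partial_F\beta$ after the two sides of $F$ are glued up by $\phi$ — it is here that the curve $\phi(c)$ appears, and consequently $c$ must be chosen with Johnson's theorem in hand \emph{before} constructing $\mathcal{A}$. A secondary issue is knowing that, for a fibered knot, the thin level $F$ of the circular Heegaard splitting in question is indeed isotopic to the fiber, so that $\mathcal{A}$ genuinely lies inside $A\cup_S B$; this is automatic whenever the underlying decomposition is locally thin, since then $F$ is incompressible, and an incompressible Seifert surface of a fibered knot must be a fiber.
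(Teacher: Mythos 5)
Your proof is correct and takes essentially the same route as the paper: the paper also uses a vertical essential annulus $\mathcal{A}$ in the fibered exterior, cut by $S$ into essential spanning annuli $\alpha\subset A$ and $\beta\subset B$ with $\partial_S\alpha=\partial_S\beta$ (so $td(S)=0$), and then cites Johnson's theorem to bound $d_\mathcal{C}(\partial_F\alpha,\partial_F\beta)\leq 4$, giving $cd(F,S)\leq 4$. Your two refinements --- choosing the curve $c$ with $d_\mathcal{C}(c,\phi(c))\leq 4$ \emph{before} forming $\mathcal{A}$, and justifying that the thin level $F$ is isotopic to the fiber --- simply make explicit steps the paper leaves implicit.
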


\begin{remark}
Because we have this bound, we will now only consider those knots in $S^3$ that are not fibered. In particular, \textit{the results that follow do not necessarily hold for fibered knots}.
\end{remark}

When we remove the essential spanning annuli and product disks from $\Gamma_A$ and $\Gamma_B$ and regard $S$ as a Heegaard splitting of $E(K)\backslash\eta(F)$, the thick distance $td(S)$ is the usual Hempel distance $d(S)$. In some sense, we can view the thick distance as a generalization of Hempel distance. As such, the usual notions of reducibility and weak reducibility of Heegaard splittings are extended to include essential spanning annuli and essential product disks. Specifically, for a circular Heegaard splitting $(F,S)$, we say that $(F,S)$ is:
	
\begin{itemize}

	\item
	\textit{reducible} if $cd(F,S) = 0$,
		
	\item
	\textit{weakly reducible} if $cd(F,S) = 1$, and
		
	\item
	\textit{strongly irreducible} if $cd(F,S)\geq2$.
	
\end{itemize}
	
For non-fibered knots, we also note that we have the inequalities
	\[
	0\leq td(S)\leq cd(F,S)\leq d(S)\leq td(S) + 2.
	\]
The last inequality follows since, for any essential spanning annulus in a compression body, there is an essential disk that is disjoint from it.

%%%%%%%%%%%%%%%%%%%%%%%%%%%%%%%%
%% Bounding Circular Distance %%
%%%%%%%%%%%%%%%%%%%%%%%%%%%%%%%%
\section{Bounding Circular Distance}

This chapter begins with the proof of the Main Theorem \ref{thm:Main}. We then go on to adapt this theorem for the case of incompressible Seifert surfaces. We end with the proof of Theorem \ref{thm:SeifertMain}, which gives a partial affirmation of a conjecture of Manjarrez-Guti\'errez (Remark 3.6 of \cite{Fab}) suspecting that minimal-genus Seifert surfaces of a knot $K\subset S^3$ always arise as part of a thin circular handle decomposition (necessarily as thin levels) of the exterior $E(K)$.

%%%%%%%%%%%%%%%%%%%%%%%%%%%%%%%
%% Proof of the Main Theorem %%
%%%%%%%%%%%%%%%%%%%%%%%%%%%%%%%
\subsection{Proof of the Main Theorem}

In this section, we complete our proof of the analog to Hartshorn's theorem for an incompressible surface $G\subset E(K)$ and a circular Heegaard splitting $(F,S)$ of $E(K)$. We recall that we must isotope $G$ so that $G\cap A$ contains exactly one essential disk or essential spanning annulus $\alpha$. Our goal is to then isotope $G$ across $S$ so that $G\cap B$ contains exactly one essential disk or essential spanning annulus $\beta$. Then $\alpha$ and $\beta$ realize an upper bound for the circular distance $cd(F,S)$.

To compute this bound explicitly, our isotopy of $G$ across $S$ must control the distances of the curves of intersection $G\cap(S\cup F)$. If $G'$ is the result of an elementary compression of $G\cap A$ from $A$ into $B$, then Lemma \ref{lem:NiceDistances} says the distance between a curve $c'\in G'\cap(S\cup F)$ and a curve $c\in G\cap(S\cup F)$ is at most one. Hence, the upper bound we compute is exactly the number of elementary compressions of $G\cap A$ from $A$ into $B$ across $S$ in order to produce the essential disk or essential spanning annulus $\beta$.

Our first goal, then, is to produce the essential disks or essential spanning annuli $\alpha$ and $\beta$.

\begin{lemma}
\label{lem:Yes}
Let $K\subset S^3$ be a knot whose exterior $E(K)$ has an admissible circular handle decomposition $(F,S)$ such that $F$ is incompressible and $td(S)\geq2$. Suppose that $G$ is a closed, connected, orientable, essential surface in $E(K)$ such that each component of $G\cap(S\cup F)$ is essential in either $S$ or $F$ and that each component of $G\cap A$ is incompressible in $A$. If $G\cap B$ contains an essential disk or an essential spanning annulus, then there is a sequence of isotopies
\[
G\simeq G_0\simeq G_1\simeq\cdots\simeq G_k\simeq G_{k+1}\simeq\cdots\simeq G_n
\]
of $G$ such that
\begin{itemize}

	\item
	Each component of $G_i\cap A$ is incompressible in $A_i$ for each $0\leq i\leq n$;
	
	\item
	Each component of $G_i\cap(S\cup F)$ is essential in either $S$ or $F$;
	
	\item
	For any choice of components $c_i\in G_i\cap(S\cup F)$ that belong to the same component of $S\cup F$, $d_\mathcal{C}(c_i,c_{i+1})\leq 1$ for $0\leq i\leq n-1$.
	
	\item
	For $0\leq i\leq k$, at least one component of $G_i\cap B$ is an essential disk or essential spanning annulus.
	
	\item
	For $k+1\leq i\leq n-1$, no component of either $G_i\cap A$ or $G_i\cap B$ is an essential disk or essential spanning annulus;
	
	\item
	The final isotopy from $G_{n-1}$ to $G_n$ ensures that $G_n\cap A$ contains exactly one essential disk or essential spanning annulus component;
	
	\item
	We must have $k\leq n-2$.

\end{itemize}

\end{lemma}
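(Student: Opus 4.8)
The plan is to run Hartshorn's construction with the two handlebodies replaced by the compression bodies $A$ and $B$, taking every move of the sequence to be an \emph{elementary compression of $G_i\cap A$ from $A$ into $B$}. With this choice the first three bulleted properties come for free: by Lemma~\ref{lem:NiceComps} such a move keeps each component of $G\cap A$ incompressible in $A$ and keeps every curve of $G\cap(S\cup F)$ essential, by Lemma~\ref{lem:NiceDistances} consecutive intersection curves are within distance $1$ in $C(S\cup F)$, and by Lemma~\ref{lem:EulerChar} each move raises $\chi(G_i\cap A)$ by exactly one. First I would normalize the starting surface: since annular compressions leave $\chi(G\cap A)$ unchanged, preserve incompressibility of $G\cap A$, and can be supported away from the given essential disk or essential spanning annulus of $G\cap B$, I may take $G_0$ isotopic to $G$ with no $\partial^*$-parallel annulus in $G_0\cap A$ or $G_0\cap B$ and with an essential disk or essential spanning annulus still present in $G_0\cap B$. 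I also record the consequence of the hypothesis $td(S)\ge 2$: by Definition~\ref{def:CircDist}, $G_i\cap A$ and $G_i\cap B$ cannot \emph{simultaneously} contain an essential disk or essential spanning annulus, because the two curves these cut off on $S$ are disjoint (or coincident) curves on the embedded surface $G_i$ and so lie at $C(S)$-distance $\le 1 < 2 \le td(S)$. In particular $G_0\cap A$ contains no essential disk or spanning annulus, so every one of its components has negative Euler characteristic and $\chi(G_0\cap A)<0$.

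The main loop: while $\chi(G_i\cap A)<0$, apply Lemma~\ref{lem:CompactElemComps} to obtain an elementary compression of $G_i\cap A$ from $A$ into $B$, and call the result $G_{i+1}$. That lemma applies at every stage: $G$ is closed, so $G_i\cap\partial_v A=\emptyset$; $G_i\cap A$ is incompressible with no $\partial^*$-parallel component by the preceding paragraph and Lemma~\ref{lem:NiceComps}; and $G_i\cap\partial A\neq\emptyset$ since an essential closed surface must meet $S\cup F$ (neither $A$ nor $B$, having non-closed $\partial_\pm$, contains a closed incompressible surface). Because $\chi(G_i\cap A)$ strictly increases at each step, the loop halts after finitely many steps at some $G_n$ with $\chi(G_n\cap A)\ge 0$; then $G_n\cap A$ is incompressible, $\partial^*$-incompressible, not $\partial^*$-parallel, and disjoint from $\partial_v A$, so by Corollary~\ref{cor:SurfsInCompBods} each of its components is an essential disk or an essential spanning annulus.

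The structural observation driving the remaining bullets is that an elementary compression of $G_i\cap A$ from $A$ into $B$ cuts a single component of $G_i\cap A$ of negative Euler characteristic along an essential arc, and such a cut can never split off a disk; hence $G_i\cap A$ contains no essential disk at any stage. I would then schedule the compressions so that no essential spanning annulus appears in $G_i\cap A$ until the very last move; this forces $G_{n-1}\cap A$ to be connected (otherwise a non-annular component would survive the terminal compression and the loop would not have halted), and one arranges $G_{n-1}\cap A$ to be a three-holed sphere with two boundary curves on $S$ and one on $F$, so that compressing it across $S$ along an arc joining the two $S$-curves produces exactly one essential spanning annulus — this is the final isotopy, giving the sixth bullet. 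On the $B$-side, an essential disk or spanning annulus of $G_0\cap B$ persists under any compression whose attaching $1$-handle meets a different component, so by scheduling the handle attachments one keeps such a component alive through some last index $k$; for $0\le i\le k$ the $B$-side then has one (bullet four), and for $k+1\le i\le n-1$ the $B$-side has none by the choice of $k$ while the $A$-side has none by the two observations above together with the simultaneity obstruction (bullet five). Finally $k\neq n$ because $G_n\cap A$ has a spanning annulus and then $G_n\cap B$ cannot, and one arranges $k\le n-2$ by destroying the last $B$-side disk/annulus at a step strictly preceding the terminal three-holed-sphere compression.

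The step I expect to be the genuine work is this last coordination: ensuring the terminal compression yields a \emph{single} essential spanning annulus rather than a larger or more complicated family, and ensuring that at least one compression separates the destruction of the $B$-side disk/annulus from the terminal step, so that $k\le n-2$ — which in turn needs $-\chi(G_0\cap A)$ large enough and may require a preliminary adjustment. This is where a Kobayashi-type normalization of the disk components and the full strength of strong irreducibility ($td(S)\ge 2$) must be brought to bear; everything preceding it is bookkeeping with Lemmas~\ref{lem:EulerChar}, \ref{lem:CompactElemComps}, \ref{lem:NiceComps}, \ref{lem:NiceDistances} and Corollary~\ref{cor:SurfsInCompBods}.
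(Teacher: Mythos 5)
Your setup and the first three bullets are fine and essentially match the paper: a sequence of elementary compressions of $G_i\cap A$ from $A$ into $B$, with annular compressions clearing $\partial^*$-parallel annuli, and with Lemmas \ref{lem:EulerChar}, \ref{lem:NiceComps} and \ref{lem:NiceDistances} giving incompressibility, essentiality of the intersection curves, and the distance-one property. The gap is in bullets four through seven, which you reduce to ``scheduling'' claims that you are not entitled to make: the available $\partial^*$-compressions are dictated by the surface, so you cannot decree that no essential spanning annulus appears in $G_i\cap A$ before the last move, nor that $G_{n-1}\cap A$ is a three-holed sphere with two boundary curves on $S$, nor that the last $B$-side disk/annulus is destroyed strictly before the terminal step; you flag this coordination yourself as ``the genuine work,'' but it is precisely the content of the lemma. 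Moreover your structural claim that $G_i\cap A$ never acquires an essential disk is false: an incompressible, non-$\partial^*$-parallel annulus with both boundary curves on $S$ (running over a $1$-handle of $A$) has $\chi=0$, is $\partial^*$-compressible, and compressing it yields a disk whose boundary is essential by Lemma \ref{lem:NiceComps}(ii) --- which is exactly why the statement (and the paper's proof) must allow an essential disk at stage $n$.

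The paper closes these points differently. It takes $n$ to be the \emph{first} index at which $G_n\cap A$ contains an essential disk or essential spanning annulus (rather than running until $\chi(G_n\cap A)\ge0$ as you do), so ``at most one disk'' follows because two disk components at stage $n$ would force $\chi(G_n\cap A)\ge\chi(G_{n-1}\cap A)+2$, contradicting Lemma \ref{lem:EulerChar}; and if two spanning annuli appear, they must come from a pair-of-pants component of $G_{n-1}\cap A$, in which case the last move is replaced by the alternative elementary compression of that component across $F$ (the saddle change of Figure \ref{fig:SaddleChange}), producing a single spanning annulus. The index $k$ is simply \emph{defined} as the largest index for which $G_k\cap B$ contains an essential disk or spanning annulus, which gives bullets four and five without any persistence argument. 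Finally, $k\le n-2$ is not ``arranged'' but proved by a distance argument: if $k=n-1$, the $S$-boundary curves of the $B$-side object at stage $k$ and of the $A$-side object at stage $n$ are at distance at most $1$ by the third bullet, forcing $td(S)\le1$ (with incompressibility of $F$ handling the annular cases), contrary to hypothesis. You have assembled the right machinery, but these three closing arguments are missing, so the proof as written is incomplete.
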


\begin{proof}
First, remove from $G\cap A$ any $\partial^*$-parallel annuli via annular compressions to form $G_0$. If none exist, we take $G_0 = G$. Because $td(S)\geq2$, no component of $G\cap A$ is an essential disk or essential spanning annulus so that $\chi(G_0\cap A) < 0$. Some component of $G_0$ must meet $S$ in a curve essential in $S$ since, otherwise, $G$ may be passed through $S$ so as to lie entirely in $A$ or $B$. Hence, there is an elementary compression of $G_0\cap A$ from $A$ into $B$ across $S$. Performing this elementary compression creates $\hat{G}_1$, an isotopy of $G_0$ that differs only by an elementary compression. Now remove from $\hat{G}_1\cap A$ any $\partial^*$-parallel annuli via annular compressions. As before, if none exist, we taken $G_1 = \hat{G}_1$. Continue in this fashion to create the remaining $G_i$ for $2\leq i\leq k$.

We choose $k$ to be the greatest integer such that $G_k\cap B$ contains an essential disk or essential spanning annulus. Such an integer exists by Lemma \ref{lem:EulerChar}. Starting with $G_k$, continue the procedure above to create $G_i$ for $k+1\leq i\leq n$, choosing $n$ to be the smallest integer such that $G_n\cap A$ contains an essential disk or essential spanning annulus component. We note here that $k\leq n-2$ because $td(S)\geq2$. The first three bullet points are shown using Lemmas \ref{lem:MinInt} and \ref{lem:NiceDistances}.

If $G_n\cap A$ contains any essential disk components, then there is only one such component; otherwise, $\chi(G_n\cap A)\geq\chi(G_{n-1}\cap A)+2$ which contradicts Lemma \ref{lem:EulerChar}.

It is possible, however, that $G_n\cap A$ contains two essential spanning annuli. If this is indeed the case, then these annuli must have come from a pair-of-pants component of $G_{n-1}\cap A$. Observe that there is an alternative elementary compression of this component across $F$. See Figure \ref{fig:SaddleChange}. Performing this elementary compression produces a single essential spanning annulus in $G_n\cap A$. This shows the fourth, fifth, and sixth bullet points.

%%%%%%%%%%%%%%%%%%%%%%%%%
%% Saddle Change Image %%
%%%%%%%%%%%%%%%%%%%%%%%%%
\begin{figure}[h]
\centering
\includegraphics[width=5in]{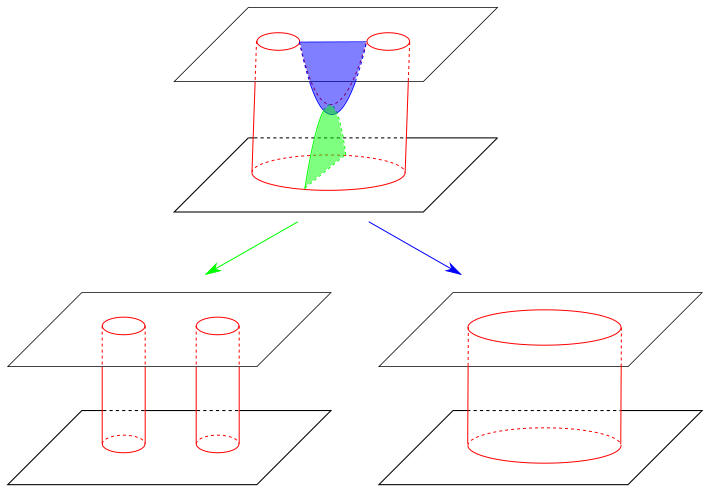}
\caption{If there is an elementary compression through $F$ coming from a pair of pants, then there is an alternative elementary compression through $S$ that we make instead.}
\label{fig:SaddleChange}
\end{figure}

To show the final bullet point, we notice that $G_i\cap A$ and $G_i\cap B$ cannot both contain an essential disk and an essential annulus since $td(S)\geq2$ and $F$ is incompressible. If $k = n-1$, there must be components $c_k\subset G_k\cap S$ and $c_n = c_{k+1}\subset G_n\cap S$ such that $c_k$ bounds an essential disk or an essential spanning annulus in $G_n\cap A$. The third bullet point shows that $d_\mathcal{C}(c_k,c_n)\leq1$. Hence, we would have $td(S)\leq1$ and contradict our assumption that $td(S)\geq2$. Then $k\leq n-2$ and we have shown the final bullet point.
\end{proof}

Again, we emphasize the symmetry between $A$ and $B$ in this lemma. That is, we could have started with an essential disk or essential spanning annulus in $G\cap A$ instead. We could then perform our sequence of isotopies via elementary compressions of $G$ from $B$ into $A$. We also take a moment here to observe that either $\chi(G_n\cap A)\leq1$ or $\chi(G_n\cap B)\leq1$.

If no such essential disk or essential annulus exists in either $G\cap A$ or $G\cap B$, then we can still produce one in the same fashion as above.

\begin{lemma}
\label{lem:No}
Let $K\subset S^3$ be a knot whose exterior $E(K)$ has an admissible circular handle decomposition $(F,S)$ such that $F$ is incompressible and $td(S)\geq2$. Suppose that $G$ is a closed, connected, orientable, essential surface in $E(K)$ such that each component of $G\cap(S\cup F)$ is essential in either $S$ or $F$ and that each component of $G\cap A$ is incompressible in $A$. If neither $G\cap A$ nor $G\cap B$ contain an essential disk or an essential spanning annulus, then there is a sequence of isotopies
\[
G\simeq G_{-m}\simeq G_{1-m}\simeq\cdots\simeq G_0 \simeq G_1\simeq\cdots\simeq G_n
\]
of $G$ such that
\begin{itemize}

	\item
	Each component of $G_i\cap A$ is incompressible in $A_i$ for $-m\leq i\leq0$ and $G_i\cap B$ is incompressible in $B_i$ for $0\leq i\leq n$;
	
	\item
	Each component of $G_i\cap(S\cup F)$ is essential in either $S$ or $F$;
	
	\item
	For any choice of components $c_i\in G_i\cap(S\cup F)$ that belong to the same component of $S\cup F$, $d_\mathcal{C}(c_i,c_{i+1})\leq 1$ for $-m\leq i\leq n-1$.
	
	\item
	There is exactly one component of $G_{-m}\cap A$ and $G_n\cap B$ that is either an essential disk or an essential spanning annulus;
	
	\item
	For $1-m\leq i\leq n-1$, no component of either $G_i\cap A$ or $G_i\cap B$ is an essential disk or essential spanning annulus.

\end{itemize}

\end{lemma}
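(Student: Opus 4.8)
The plan is to run the construction of Lemma~\ref{lem:Yes} in both directions away from $G$. I would first pass to an isotopic copy of $G$ in minimal position, so that Lemma~\ref{lem:MinInt} guarantees that $G\cap A$ and $G\cap B$ are both incompressible, that every curve of $G\cap(S\cup F)$ is essential in $S$ or $F$, and that there are no $\partial^*$-parallel annuli in $G\cap A$ or $G\cap B$; call this surface $G_0$. Since $G_0\cap A$ and $G_0\cap B$ are incompressible, have every curve of intersection essential, and contain no $\partial^*$-parallel annuli, no component of $G_0\cap A$ or $G_0\cap B$ is a disk, an annulus, or closed (a closed incompressible surface in either compression body would be parallel to the non-closed surface $F$); hence every component has negative Euler characteristic and, since $G$ is essential and therefore meets $S\cup F$, we get $\chi(G_0\cap A)<0$ and $\chi(G_0\cap B)<0$.

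Next I would build the two halves of the sequence separately. Going \emph{down} from $G_0$: at each stage $G_i\cap A$ is incompressible (Lemma~\ref{lem:NiceComps}(i)), meets $\partial_vA$ in no arcs since $G$ is closed, has no $\partial^*$-parallel components (removed by an annular compression after each step, per the remark following that definition), and meets $\partial A$ nontrivially; so whenever $\chi(G_i\cap A)<0$, Lemma~\ref{lem:CompactElemComps} supplies an elementary compression of $G_i\cap A$ from $A$ into $B$, and by Lemma~\ref{lem:EulerChar} the quantity $\chi(G_i\cap A)$ increases by $1$ at each step, so the process terminates; I take $m\ge1$ minimal with $G_{-m}\cap A$ containing an essential disk or essential spanning annulus. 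Going \emph{up} from $G_0$ is symmetric (using the $A$--$B$ symmetry of Lemmas~\ref{lem:NiceComps} and~\ref{lem:CompactElemComps} and the incompressibility of $G_0\cap B$), giving $n\ge1$ minimal with $G_n\cap B$ containing an essential disk or essential spanning annulus. The incompressibility and essential-curves bullets now follow from Lemma~\ref{lem:NiceComps} together with the annular-compression cleanup (note that in the downward range only $G_i\cap A$ is asserted incompressible and in the upward range only $G_i\cap B$, which is exactly what is claimed), and the bound $d_\mathcal{C}(c_i,c_{i+1})\le1$ is precisely Lemma~\ref{lem:NiceDistances}. For the penultimate bullet, in the downward range $G_i\cap A$ has no essential disk or spanning annulus by minimality of $m$, while $G_i\cap B$ has none because it is obtained from a surface all of whose components have $\chi\le-1$ by attaching $2$-dimensional $1$-handles, an operation that produces no disk or annulus component; symmetrically in the upward range. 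The hypotheses $td(S)\ge2$ and $F$ incompressible enter exactly as in Lemma~\ref{lem:Yes}, e.g.\ to rule out the two sides simultaneously carrying an essential disk or essential spanning annulus.

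The one genuinely delicate point, which I expect to be the main obstacle, is the ``exactly one'' clause: that exactly one component of $G_{-m}\cap A$, and likewise of $G_n\cap B$, is an essential disk or essential spanning annulus. Here I would argue as in Lemma~\ref{lem:Yes}: reversing the last elementary compression is a $2$-dimensional $1$-handle attachment, so by the Euler-characteristic count two essential disk components cannot appear at once (that would force $\chi(G_{-m}\cap A)\ge\chi(G_{-(m-1)}\cap A)+2$, contradicting Lemma~\ref{lem:EulerChar}), and two essential spanning annuli can appear only when they come from a single pair-of-pants component of $G_{-(m-1)}\cap A$; in that case I replace the final compression by the alternative saddle resolution of that pair of pants (the elementary compression ``across $F$''; see Figure~\ref{fig:SaddleChange}), which produces a single essential spanning annulus. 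The same argument handles the top end $G_n\cap B$. Everything else is a transcription of the arguments of Lemmas~\ref{lem:Yes}, \ref{lem:MinInt}, \ref{lem:NiceComps}, \ref{lem:CompactElemComps}, \ref{lem:NiceDistances} and Corollary~\ref{cor:SurfsInCompBods}.
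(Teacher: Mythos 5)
Your proposal is correct and follows essentially the same route as the paper: the paper's proof of this lemma simply runs the construction of Lemma~\ref{lem:Yes} in both directions from $G_0$ (elementary compressions from $A$ into $B$ until the $A$ side first carries an essential disk or spanning annulus, and symmetrically from $B$ into $A$ for the other end), citing Lemmas~\ref{lem:EulerChar}, \ref{lem:NiceComps}, \ref{lem:NiceDistances} and the pair-of-pants saddle change for the ``exactly one'' clause, exactly as you do. The only difference is that you spell out details (termination, the middle-range bullet, the Euler-characteristic bookkeeping) that the paper disposes of with ``as in the proof of Lemma~\ref{lem:Yes}.''
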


\begin{proof}
Since both $G\cap A$ and $G\cap B$ contain no essential disks or essential spanning annuli, it follows that both $\chi(G\cap A) < 0$ and $\chi(G\cap B) < 0$. Hence, $G\cap A$ has an elementary compression from $A$ into $B$ across $S$. We define the sequence of isotopies as before to get the $G_i$ for $-m\leq i\leq-1$. Then $G_{-m}\cap A$ contains exactly one essential disk or essential spanning annulus. Now starting at $G_0$, but reversing the roles of $A$ and $B$ in the previous lemma, gives us the $G_i$ surfaces for $1\leq i\leq n$. Then $G_n\cap B$ contains exactly one essential disk or essential spanning annulus. All the noted properties of the sequence are now satisfied via the proof of Lemma \ref{lem:Yes}.
\end{proof}

\begin{theorem}[Main Theorem]
\label{thm:Main}
Let $K\subset S^3$ be a knot whose exterior $E(K)$ has a circular Heegaard splitting $(F,S)$ such that $F$ is incompressible. If its exterior $E(K)$ contains a closed, orientable, essential surface $G$ of genus $g$, then $cd(F,S) \leq 2g$.
\end{theorem}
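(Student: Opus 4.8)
The plan is to run Hartshorn's argument in the circular setting, using the machinery already assembled. First I would handle the easy cases: if $td(S)\le 1$, then since $cd(F,S)\le d(S)\le td(S)+2\le 3\le 2g$ whenever $g\ge 2$, and one checks the small-genus cases separately (recalling that $E(K)$ is irreducible so $g\ge 1$, and a genus-1 essential surface forces a closer look), we may assume $td(S)\ge 2$; more precisely I would argue that if $cd(F,S)\ge 3$ we are forced into the strongly irreducible regime and can invoke the sequence-of-isotopies lemmas. So assume $td(S)\ge 2$ and, after isotopy, that $G$ meets $S\cup F$ minimally; by Lemma~\ref{lem:MinInt}, each component of $G\cap A$ and $G\cap B$ is incompressible, each curve of $G\cap(S\cup F)$ is essential, and there are no $\partial^*$-parallel annuli.

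Next I would apply Lemma~\ref{lem:Yes} or Lemma~\ref{lem:No} (depending on whether $G\cap A$ or $G\cap B$ already contains an essential disk or essential spanning annulus) to produce a finite sequence of isotopies of $G$, each differing by a single elementary compression, beginning with a stage whose intersection with one compression body contains exactly one essential disk or essential spanning annulus $\alpha$, and ending with a stage whose intersection with the other compression body contains exactly one such surface $\beta$. Let $L$ be the total number of elementary compressions in this sequence. By Lemma~\ref{lem:NiceDistances}, consecutive intersection curves (on the same component of $S\cup F$, whenever both are nonempty) are at distance $\le 1$ in the curve complex, so summing along the chain for the $S$-curves and separately for the $F$-curves gives $d_\mathcal{C}(\partial_S\alpha,\partial_S\beta)+d_\mathcal{C}(\partial_F\alpha,\partial_F\beta)\le L$; since $\alpha\in\Gamma_A$ and $\beta\in\Gamma_B$ (or vice versa), this yields $cd(F,S)\le L$. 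One subtlety here is that at an intermediate stage one of $G_i\cap S$ or $G_i\cap F$ might be empty on a given component; the convention $d_\mathcal{C}([\gamma],\emptyset)=0$ and a careful bookkeeping of which component each compression curve lives on keeps the telescoping estimate valid.

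The crux is then the Euler-characteristic count bounding $L$ by $2g$. Each elementary compression changes $\chi(G\cap A)$ by $\pm1$ (Lemma~\ref{lem:EulerChar}), while annular compressions leave it unchanged; the sequence starts with one side having $\chi\le 1$ (a single disk or annulus, plus incompressible non-disk-non-annulus pieces which, being incompressible and $\partial^*$-incompressible-free in the relevant sense, each contribute $\le 0$) and ends symmetrically. Using $\chi(G\cap A)+\chi(G\cap B)=\chi(G)=2-2g$ together with the fact that at the endpoints one of the two terms is at most $1$ and the monotone behavior of $\chi(G_i\cap A)$ along the compression chain, I would bound the number of compressions by the total "Euler characteristic deficit" that must be traversed, namely roughly $(1)-(2-2g-1)=2g$. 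This is the step I expect to be the main obstacle: I must argue carefully that no component other than the distinguished disk/annulus ever contributes positively to $\chi$ (which is where incompressibility of the pieces in $A$ and $B$, incompressibility of $F$, and $td(S)\ge 2$ are all used, via Corollary~\ref{cor:SurfsInCompBods} and the no-$\partial^*$-parallel-annulus condition), and that the alternative saddle move of Figure~\ref{fig:SaddleChange} genuinely prevents the count from slipping to $2g+1$ or worse in the pair-of-pants case. Once the inequality $L\le 2g$ is in hand, combining with $cd(F,S)\le L$ from the previous paragraph completes the proof.
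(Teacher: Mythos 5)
Your proposal is correct and follows essentially the same route as the paper: minimal intersection via Lemma~\ref{lem:MinInt}, the compression sequences of Lemmas~\ref{lem:Yes} and~\ref{lem:No} combined with Lemma~\ref{lem:NiceDistances} to bound $cd(F,S)$ by the number of elementary compressions, and the Euler-characteristic count via Lemma~\ref{lem:EulerChar} giving the bound $2g$. Even your flagged "closer look" at the residual case $td(S)\leq 1$, $g=1$, $cd(F,S)=3$ is resolved in the paper exactly as you suggest, by using strong irreducibility (and $cd(F,S)=3$ in place of $td(S)\geq 2$) to recover the sequence lemmas and rerun the count to reach a contradiction.
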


\begin{proof}
We may isotope $G$ so that it intersects $S\cup F$ in a minimal number of curves. We divide the argument into two cases: (i) $td(S)\geq2$ and (ii) $td(S)\leq1$

We first assume that $td(S)\geq2$. By Lemma \ref{lem:MinInt}, this embedding of $G$ satisfies the conditions of Lemmas \ref{lem:Yes} and \ref{lem:No}. From these lemmas, we conclude that there must exist a sequence of isotopies $G_0\simeq\cdots G_n$ such that $G_0\cap A$ and $G_n\cap B$ both contain exactly one essential disk or essential spanning annulus each. Call these components $P_A\subset G_0\cap A$ and $P_B\subset G_n\cap B$, respectively. We also choose a sequence of curves $c_i\subset G_i\cap (S\cup F)$ such that $c_0\in\partial P_A$ and $c_n\in\partial P_B$

We bound $n$ by first observing that $\chi(G) = \chi(G_0\cap A) + \chi(G_0\cap B)$ because $G\simeq G$ and $G\cap (S\cup F)$ is a collection of circles with Euler characteristic zero. Since $G_0\cap A$ contains exactly one essential disk or essential spanning annulus, we use Lemma \ref{lem:EulerChar} to show inductively that

\begin{eqnarray*}
\chi(G)	&	=			&	\chi(G_0\cap A) + \chi(G_0\cap B)	\\
2 - 2g	&	\leq	&	1 + \chi(G_0\cap B)	\\
1 - 2g	&	\leq	&	\chi(G_0\cap B)	\\
1 - 2g	&	\leq	&	\chi(G_n\cap B) - n	\\
1 - 2g	&	\leq	&	1-n	\\
n				&	\leq	&	2g.
\end{eqnarray*}

Since $n$ was chosen to be the smallest integer such that $G_n\cap B$ contains an essential disk or essential spanning annulus, we then need $n$ elementary compressions of $G$ to move from having an essential disk or essential spanning annulus in $A$ to having one in $B$. Hence, we can bound the circular distance of the decomposition as
\[
cd(F,S)	\leq	d_\mathcal{C}(c_0,c_n)	\leq	n	\leq	2g.
\]

We now assume that $td(S)\leq1$. The remarks following Definition \ref{def:CircDist} show that $cd(F,S)\leq3$. If $cd(F,S)\leq2$, then $cd(F,S) \leq 2g$ trivially. Hence, the only other case we need to consider is when $cd(F,S)=3$ with $g=1$. We show now that this is impossible.

Observe that $td(S) = 1$ and $d(S) = 3$ when $cd(F,S)=3$ and $S$ is strongly irreducible. Suppose that $G\cap B$ contains an essential disk or essential spanning annulus. Then we are able to recover the sequence of isotopies constructed in Theorem \ref{lem:Yes} until we reach the final paragraph of the proof. The last bullet point follows from the fact that $cd(F,S) = 3$; that is, $G_i\cap A$ and $G_i\cap B$ cannot both contain essential disks and essential spanning annuli for any $0\leq i\leq n$. Hence, $n>k$.

If $k = n-1$, there would exist components $c_k\subset G_k\cap S$ and $c_n\subset G_n\cap S$ such that $c_k$ bounds an essential disk or essential spanning annulus $P_B\subset G_n\cap B$ and that $c_n$ bounds an essential disk or essential spanning annulus $P_A\subset G_k\cap A$. Moreover, because $td(S)=1$, we see that $d_\mathcal{C}(c_k,c_n)=1$.

The strong irreducibility of $S$ shows $P_A$ and $P_B$ cannot both be disks. A similar contradiction in $cd(F,S) = 3$ is found, without loss of generality, if $P_A$ is a disk and $P_B$ is an annulus. We then see that $P_A$ and $P_B$ are both annuli. However, $d_\mathcal{C}(\partial_FP_A,\partial_FP_B)\leq1$ so that $cd(F,S)\leq2$ again. We conclude that $k\leq n-2$.

If neither $G\cap A$ nor $G\cap B$ contain an essential disk or essential spanning annulus, then we are able to recover the sequence constructed in Theorem \ref{lem:No} in its entirety. The Euler characteristic argument above then shows that $cd(F,S)\leq2$, thereby contradicting the requirement that $cd(F,S)=3$.
\end{proof}

\begin{corollary}
\label{cor:Main}
Let $K\subset S^3$ be a knot whose exterior $E(K)$ has a circular Heegaard splitting $(F,S)$ such that $F$ is incompressible. If its exterior $E(K)$ contains a closed, orientable, essential surface $G$ of genus $g$ that can be isotoped to be disjoint from $F$, then $d(S) \leq 2g$.
\end{corollary}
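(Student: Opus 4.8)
The plan is to rerun the proof of Theorem \ref{thm:Main} while using the hypothesis $G\cap F=\emptyset$ to keep the entire construction inside $M'=E(K)\setminus\eta(F)$, thereby landing the bound in the ordinary curve complex $C(S)$ rather than in the sum defining $cd(F,S)$. First I would record that $M'$ is irreducible (it is $E(K)$, which is irreducible, cut along the incompressible surface $F$), that $S$ is a Heegaard surface for $M'$ with compression bodies $A$ and $B$, and that $G\subset M'$ is closed, orientable, and incompressible in $M'$. All of the compression-body lemmas of Section~\ref{sec:CompsAndCircDist} (Lemmas~\ref{lem:DisksAndAnnuli}--\ref{lem:NiceDistances}) apply verbatim with $E(K)$ replaced by $M'$, since they use only irreducibility of the ambient manifold and incompressibility of $G$ in it. After isotoping $G$ to meet $S$ in a minimal number of curves within $M'$, Lemma~\ref{lem:MinInt} (read in $M'$) gives that $G\cap A$ and $G\cap B$ are incompressible, that $G\cap S$ consists of curves essential in $S$, and that there are no $\partial^*$-parallel annuli.

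The key simplification is this: since $G$ is disjoint from $F$ and (being closed) from $\partial_vA$ and $\partial_vB$, every component of $G\cap A$ and of $G\cap B$ is a properly embedded surface whose boundary lies entirely on $S$. An essential spanning annulus of $A$ or $B$ necessarily has a boundary curve on $F$, so \emph{no component of $G\cap A$ or $G\cap B$ can be an essential spanning annulus}; by Lemma~\ref{lem:DisksAndAnnuli} the only non-$\partial$-parallel, incompressible, $\partial^*$-incompressible components are essential disks, whose boundary curves lie on $S$ — that is, they are compressing disks for the Heegaard surface $S$. Moreover every $\partial^*$-compression we perform is along a disk with arc $\beta\subset S\sqcup F$ whose endpoints lie on $G\cap S$; as $S\sqcup F$ is disconnected, $\beta\subset S$ and the compressed surface is again disjoint from $F$. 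In particular the alternative-compression step of Lemma~\ref{lem:Yes} (Figure~\ref{fig:SaddleChange}), used there only to avoid producing two essential spanning annuli, is never needed here. Assuming first $td(S)\ge2$, Lemmas~\ref{lem:Yes} and \ref{lem:No} then produce a sequence $G_0\simeq\cdots\simeq G_n$ in which $G_0\cap A$ contains exactly one compressing disk $P_A$ for $S$ in $A$, $G_n\cap B$ contains exactly one compressing disk $P_B$ for $S$ in $B$, all intersection curves $c_i\in G_i\cap S$ lie on the connected surface $S$ with $d_\mathcal{C}(c_i,c_{i+1})\le1$ (Lemma~\ref{lem:NiceDistances}), and — by the same Euler-characteristic count as in Theorem~\ref{thm:Main}, using $\chi(G)=2-2g$ and Lemma~\ref{lem:EulerChar} — $n\le 2g$. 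Taking $c_0=\partial P_A$ and $c_n=\partial P_B$ yields $d(S)\le d_\mathcal{C}(\partial P_A,\partial P_B)\le n\le 2g$.

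It remains to treat $td(S)\le1$, which I would handle exactly as the corresponding endgame in the proof of Theorem~\ref{thm:Main}. In that case $d(S)\le td(S)+2\le3$, so $d(S)\le2g$ is automatic once $g\ge2$; for $g=1$ one must rule out $d(S)=3$, and there $S$ is strongly irreducible, so the argument forcing $k\le n-2$ in the proof of Lemma~\ref{lem:Yes}, combined with the bound $n\le 2g=2$, gives $d(S)\le d_\mathcal{C}(\partial P_A,\partial P_B)\le n=2$, a contradiction. I expect the main obstacle to be precisely the bookkeeping around spanning annuli: one must verify that disjointness from $F$ is preserved under \emph{every} move used in Lemmas~\ref{lem:Yes} and \ref{lem:No} (so that no move is secretly a compression across $F$), and confirm that removing the spanning-annulus possibility does not break the two places in the proof of Theorem~\ref{thm:Main} that relied on it — the alternative-compression correction and the ruling-out of the extremal distance — which is why the low-$td(S)$ case is argued separately rather than merely quoted.
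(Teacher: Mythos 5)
Your proposal is correct and follows essentially the same route as the paper, whose proof of this corollary simply observes that the argument of Theorem \ref{thm:Main} goes through and that, with $G$ disjoint from $F$, the bound is realized without essential spanning annuli, so all compressions and intersection curves live on $S$ and the estimate lands on the Hempel distance $d(S)$. Your write-up just makes explicit the details (no spanning annuli, compression arcs confined to $S$, the $td(S)\leq 1$ endgame) that the paper leaves implicit.
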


\begin{proof}
The proof of Theorem \ref{thm:Main} holds. Moreover, we can realize the bound on $cd(F,S)$ without using essential spanning annuli.
\end{proof}

%%%%%%%%%%%%%%%%%%%%%%%%%%%%%%%%%%%%%%%%%%%%%%%%%%%%%%%%%%%%%%
%% Circular Distance Bound via an Alternate Seifert Surface %%
%%%%%%%%%%%%%%%%%%%%%%%%%%%%%%%%%%%%%%%%%%%%%%%%%%%%%%%%%%%%%%
\subsection{Circular Distance Bound via an Alternate Seifert Surface}

It was conjectured in \cite{Fab} that it may be possible that Seifert surfaces of minimal genus appear as the thin levels of thin circular handle decompositions. We provide a partial affirmation of this conjecture; however, because Seifert surfaces are not closed, more justification is needed in order to prove this fact. We first state a uniqueness theorem as a corollary to the following theorem:

\begin{theorem}
\label{thm:SeifertMain}
Let $K\subset S^3$ be a knot whose exterior $E(K)$ has a circular Heegaard splitting $(F,S)$ such that $F$ is incompressible. If $F'$ is an incompressible Seifert surface for $K$ with genus $g$ that is not isotopic to $F$, then $d(S)\leq2g+1$.
\end{theorem}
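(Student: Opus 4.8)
The plan is to run the argument of the Main Theorem (Theorem~\ref{thm:Main}), and in fact of Corollary~\ref{cor:Main}, with the incompressible Seifert surface $F'$ in place of the closed essential surface $G$. Two features distinguish $F'$ from $G$: it is not closed, meeting $\partial E(K)$ in a single copy of the longitude $\partial F'$, which is the $0$-framed longitude, as is $\partial F$ (since $F$ is a regular level of the circle-valued Morse function); and it need not be disjoint from $F$. Because of the latter we cannot expect to control the $F$-coordinate of the distance, so we track only the curves of $F'\cap S$ and aim at the honest Hempel distance $d(S)$. The case $td(S)\le1$ is immediate: then $d(S)\le td(S)+2\le3\le2g+1$ whenever $g\ge1$, and $g=0$ means $F'$ is a disk, so $K$ is the unknot. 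So the real work is the case $td(S)\ge2$, where we produce a sequence of single elementary compressions of $F'$ across $S$ carrying a configuration whose $A$-piece contains a compressing disk of $S$ to one whose $B$-piece does, and bound the length of this sequence by $2g+1$.

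The first step is to put $F'$ in good position, the analogue of Lemma~\ref{lem:MinInt}. Isotope $F'$ so that $\partial F'$ lies in the interior of the vertical annulus $\partial_v A$ parallel to its core, and so that $F'$ meets $S\cup F$ in a minimal number of curves. Exactly as in Lemma~\ref{lem:MinInt}, incompressibility of $F'$ and irreducibility of $E(K)$ give that $F'\cap A$ and $F'\cap B$ are incompressible, that every curve of $F'\cap(S\cup F)$ is essential in $S\cup F$, and that there are no $\partial^*$-parallel annulus components. Here I must also use the hypothesis $F'\not\simeq F$: by Corollary~\ref{cor:SurfsInCompBods} an incompressible, $\partial^*$-incompressible, non-$\partial$-parallel surface in $A$ (or $B$) is an essential disk or essential spanning annulus, and a positive-genus bounded surface is neither; hence $F'$ cannot be isotoped off $S$, and, being non-$\partial$-parallel and not isotopic to $F$, it meets $S$ in curves essential in $S$. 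Throughout all subsequent isotopies I would keep $\partial F'$ fixed, as is done for the vertical arcs in Lemma~\ref{lem:ProdDisks}, so that every compression that occurs avoids $\partial_v A$ and is a genuine $\partial^*$-compression.

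With $F'$ so positioned I would prove the compact-surface analogues of Lemmas~\ref{lem:Yes} and~\ref{lem:No}: start from an essential disk of $S$ on whichever side one exists, producing one if necessary, and apply elementary compressions --- which exist whenever the relevant piece has negative Euler characteristic by Lemma~\ref{lem:CompactElemComps} (whose hypothesis that the piece meet the vertical boundary in at most one component holds here since $F'\cap\partial_v A=\partial F'$), which preserve incompressibility of the pieces and essentiality of the curves by Lemma~\ref{lem:NiceComps}, and which move each curve of $F'\cap S$ a distance at most $1$ by Lemma~\ref{lem:NiceDistances} --- until $F'_n\cap B$ contains exactly one essential disk. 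When this disk is a product disk rather than a compressing disk of $S$, one applies the standard re-routing (as in the pair-of-pants step of Lemma~\ref{lem:Yes}) to replace it by a compressing disk; this is why the conclusion is stated for $d(S)$. The only numerical change from the Main Theorem is that $F'$ has one boundary circle, so $\chi(F')=1-2g$ rather than $2-2g$. Writing $c_i\subset F'_i\cap S$ for a compatible choice of curves with $c_0$ bounding the disk in $A$ and $c_n$ the disk in $B$, Lemma~\ref{lem:EulerChar} gives
\[
1-2g \;=\; \chi(F'_0\cap A)+\chi(F'_0\cap B)\;\le\; 1+\chi(F'_0\cap B)\;=\;1+\bigl(\chi(F'_n\cap B)-n\bigr)\;\le\;2-n ,
\]
so $n\le2g+1$ and hence $d(S)\le d_\mathcal{C}(c_0,c_n)\le n\le2g+1$.

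The step I expect to be the main obstacle is controlling the one component $P_0$ of $F'\cap A$ that meets $\partial_v A$. In the closed case of Theorem~\ref{thm:Main} every component of $G\cap A$ has its boundary on $S\cup F$, so Corollary~\ref{cor:SurfsInCompBods} applies on the nose; here $P_0$ has a boundary \emph{circle} on $\partial_v A$, so it cannot be a product disk, and one must show that in minimal position $P_0$ is $\partial^*$-compressible unless it is a $\partial$-parallel piece that can be removed --- the degenerate removable case being precisely the one from which $F'\simeq F$ would follow, and hence excluded by hypothesis. Making this bookkeeping airtight, and checking that all the innermost-disk/outermost-arc reductions in the reproved lemmas can be taken disjoint from $\partial_v A$ so that no compression is ever forced to run through the vertical boundary, is routine but delicate because $S$ is only weakly incompressible.
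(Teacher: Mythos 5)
There is a genuine gap, and it sits exactly where you deferred the work. The paper's proof does \emph{not} allow $F'$ to intersect $F$: its very first move is to invoke Scharlemann--Thompson \cite{ST1} to isotope the incompressible Seifert surface $F'$ to be disjoint from the thin level $F$ (both being incompressible Seifert surfaces), and everything downstream depends on that. Once $F'\cap F=\emptyset$, the pieces $F'\cap A$ and $F'\cap B$ can contain no essential spanning annuli, so the compression sequence of Lemmas \ref{lem:SeifertYes} and \ref{lem:SeifertNo} genuinely terminates in honest compressing disks of $S$ on both sides, which is what a bound on the Hempel distance $d(S)$ requires. In your version, where $F'$ may meet $F$, the classification (Corollary \ref{cor:SurfsInCompBods}) and the Euler-characteristic argument only force the terminal component to be ``an essential disk or an essential spanning annulus,'' and a spanning annulus (one boundary curve on $F$) gives no vertex of $C(S)$; your assertion ``until $F'_n\cap B$ contains exactly one essential disk'' is unjustified. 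The re-routing trick you cite from Lemma \ref{lem:Yes} handles product disks, not spanning annuli, and trading a spanning annulus for a disjoint compressing disk costs up to two extra steps per side, which overshoots $2g+1$.

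The second problem is the component $P_0$ of $F'\cap A$ meeting $\partial_vA$, which you flag as ``routine but delicate'' and leave unresolved. This is not bookkeeping: it is the actual crux, and it is where the hypothesis $F'\not\simeq F$ must be spent. The paper resolves it in Lemma \ref{lem:SeifertYes} only because $F'\cap F=\emptyset$: when $\chi(F'_m\cap B)=0$ the pieces are annuli, disjointness from $F$ forces the annuli meeting $\partial_vB$ to behave so that either $F'_m$ would be isotopic to $F$ (contradicting the hypothesis) or an annulus disjoint from $\partial_vB$ admits a further elementary compression, after which exactly one compressing disk of $S$ appears. Note also that the paper's classification lemmas do not cover your configuration as stated: Lemma \ref{lem:ProdDisks} assumes the intersection with $\partial_vA$ is a collection of essential spanning \emph{arcs}, whereas your $P_0$ meets $\partial_vA$ in a closed core curve, so Lemma \ref{lem:CompactElemComps} cannot simply be quoted for it. Your numerics ($\chi(F')=1-2g$, hence $n\le 2g+1$) match the paper, but to make the argument close you should first put $F'$ disjoint from $F$ via \cite{ST1}; with that in hand your sketch essentially becomes the paper's proof.
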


\begin{corollary}
\label{cor:UniqueSeifert}
Let $K\subset S^3$ be a knot whose exterior $E(K)$ has a circular Heegaard splitting $(F,S)$ such that $F$ is a Seifert surface of $K$ that realizes the Seifert genus $g(K)$. If $d(S)>2g(K)+1$, then $F$ is the unique Seifert surface of minimal genus for $K$ up to isotopy.
\end{corollary}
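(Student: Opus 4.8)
The plan is to argue by contradiction, treating the statement as a formal consequence of Theorem~\ref{thm:SeifertMain}. Suppose $F$ is not the unique minimal-genus Seifert surface for $K$ up to isotopy. Then there is a Seifert surface $F'$ for $K$ with $g(F') = g(K)$ that is not isotopic to $F$, and the goal is to derive a contradiction with the hypothesis $d(S) > 2g(K)+1$.

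First I would record the standard fact that any Seifert surface realizing the Seifert genus is incompressible. Indeed, if such a surface admitted a compressing disk, compressing along it would raise the Euler characteristic by $2$; whichever component of the result contains $K$ as its boundary is again a Seifert surface for $K$, now of strictly smaller genus, contradicting minimality. (If the compression is non-separating the genus drops by one; if it is separating, the disk being essential forces a closed component of positive genus to split off, so the genus of the piece bounded by $K$ still drops.) Applying this to $F$ confirms that the circular Heegaard splitting $(F,S)$ has $F$ incompressible, so the hypotheses of Theorem~\ref{thm:SeifertMain} are satisfied; applying it to $F'$ shows $F'$ is incompressible as well.

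Next I would invoke Theorem~\ref{thm:SeifertMain} with the incompressible Seifert surface $F'$, which has genus $g = g(F') = g(K)$ and, by assumption, is not isotopic to $F$. The theorem gives $d(S) \le 2g(F') + 1 = 2g(K)+1$, directly contradicting $d(S) > 2g(K)+1$. Hence no such $F'$ exists, so every minimal-genus Seifert surface for $K$ is isotopic to $F$, which is the claim.

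The argument carries essentially no obstacle of its own; it is a one-line deduction from Theorem~\ref{thm:SeifertMain}. The only point requiring care is verifying that both $F$ and the hypothetical competitor $F'$ are incompressible so that the theorem applies, and that is precisely where minimality of the Seifert genus is used.
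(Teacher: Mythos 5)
Your proposal is correct and is exactly the argument the paper intends: the corollary is the immediate contrapositive of Theorem \ref{thm:SeifertMain}, with the minimality of $g(K)$ guaranteeing the incompressibility hypotheses for both $F$ and the hypothetical competitor $F'$. Nothing further is needed.
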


We adapt our work from the previous section to the case where the closed surface $G\subset E(K)$ is instead taken to be properly embedded with non-empty boundary. Section \ref{sec:CompsAndCircDist} classifies the compact, connected, incompressible, $\partial^*$-incompressible surfaces that may appear in the compression bodies $A$ and $B$, and Lemma \ref{lem:CompactElemComps} indicates when elementary compressions exist.

The next step is to mimic Theorems \ref{lem:Yes} and \ref{lem:No}. We do this by replacing the closed surface $G$ with an incompressible Seifert surface $F'$ for the knot $K$. We first notice that Lemma \ref{lem:MinInt} is still applicable in this setting; that is, we can still isotope $F'$ to (i) intersect $S$ in curves that are essential in $S$ and (ii) intersect $A$ and $B$ so that $F'\cap A$ and $F'\cap B$ are incompressible in $A$ and $B$, respectively. In addition, Scharlemann and Thompson \cite{ST1} show that $F'$ may be assumed to be disjoint from the thin level $F$ (as $F$, too, is an incompressible Seifert surface for $K$).

Observe that if $F'$ is disjoint from $F$, then $F'\cap A$ and $F'\cap B$ contain no essential spanning annuli. Hence, we first consider the case where one of either $F'\cap A$ or $F'\cap B$ contains an essential disk component, and then we consider the case where neither $F'\cap A$ nor $F'\cap B$ contain any essential disks.

\begin{lemma}
\label{lem:SeifertYes}
Let $K\subset S^3$ be a non-fibered knot whose exterior $E(K)$ has a circular Heegaard splitting $(F,S)$ such that $F$ is incompressible. Suppose further that $F'$ is an incompressible Seifert surface for $K$ disjoint from and non-isotopic to $F$ such that $F'\cap S$ is a collection of simple closed curves that are essential in $S$.

If $F'\cap A$ contains an essential disk and each component of $F'\cap A$ is incompressible in $A$, then there exists a sequence of isotopies
\[
F'\simeq F'_0\simeq F'_1\simeq\cdots\simeq F'_m\simeq\cdots\simeq F'_n
\]
such that

\begin{itemize}

	\item
	Each component of $F'_i\cap S$ is essential in $S$;

	\item
	Each component of $F'_i\cap A$ is incompressible in $A$;
	
	\item
	For any choice of components $c_i\in F'_i\cap S$ and $c_{i+1}\in F'_{i+1}\cap S$, we have $d_\mathcal{C}(c_i,c_{i+1})\leq1$ for $0\leq i\leq n-1$;
	
	\item
	$F'_n\cap B$ contains exactly one essential disk component;
	
	\item
	$F'_i\cap A$ contains an essential disk component for each $0\leq i\leq m$, and neither $F'_i\cap A$ nor $F'_i\cap B$ contain any essential disk components for $m+1\leq i\leq n-1$.
	
\end{itemize}

\end{lemma}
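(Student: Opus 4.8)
The plan is to run the argument of Lemma~\ref{lem:Yes} with the roles of $A$ and $B$ interchanged, with $F'$ in place of the closed surface $G$ and with the compact-surface machinery of Section~\ref{sec:CompsAndCircDist} (Corollary~\ref{cor:SurfsInCompBods}, Lemma~\ref{lem:CompactElemComps}) replacing its closed-surface versions. Two features of the Seifert setting help: since $F'$ is disjoint from $F$, all of $F'$ lies in $W=A\cup_S B$, and neither $F'\cap A$ nor $F'\cap B$ can contain an essential spanning annulus, which is why only essential \emph{disks} appear in the statement. The one genuinely new ingredient is the vertical boundary: $\partial F'=K$ is a longitude disjoint from $\partial F$, hence isotopic on $\partial E(K)$ into the interior of $\partial_vA$; I would assume this isotopy performed (the case $\partial F'\subset\partial_vB$ is symmetric), so that $F'$ meets $\partial_vA$ in the single essential curve $K$ and misses $\partial_vB$. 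Because every $\partial^*$-compression below is carried out near $S$, it fixes $\partial F'$, so this configuration, and in particular the hypothesis ``$G\cap\partial_vB$ has at most one component'' of Lemma~\ref{lem:CompactElemComps}, persists at every stage, as does the identity $\chi(F'_i\cap A)+\chi(F'_i\cap B)=\chi(F')$.

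First I would produce $F'_0$ by repeatedly removing $\partial^*$-parallel annuli from $F'\cap A$ and from $F'\cap B$ via annular compressions. Each such move strictly decreases $|F'\cap S|$ and, by the remark following the definition of annular compression, changes neither $\chi(F'\cap A)$ nor $\chi(F'\cap B)$, so the process terminates at some $F'_0\simeq F'$ with no $\partial^*$-parallel components, still meeting $S$ in curves essential in $S$, still with $F'_0\cap A$ and $F'_0\cap B$ incompressible (the latter as in Lemma~\ref{lem:MinInt}), and still containing an essential disk in $F'_0\cap A$, since a disk is not a $\partial^*$-parallel annulus and the other removals leave it alone up to isotopy. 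One also checks $F'_0\cap S\neq\emptyset$: otherwise $F'_0$ lies entirely in $A$ or in $B$, and being incompressible, $\partial^*$-incompressible, and meeting a vertical boundary, it would be an essential disk by Lemma~\ref{lem:ProdDisks}, contradicting that $F'$ is not a disk (as $K$ is non-fibered, hence nontrivial).

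The heart of the proof is the iteration. As long as $F'_i\cap B$ contains no essential disk, every component of $F'_i\cap B$ is incompressible in $B$, is not an essential disk, is not an essential spanning annulus, and is not $\partial$-parallel; Corollary~\ref{cor:SurfsInCompBods} then forces one of them to be $\partial^*$-compressible, and $\partial^*$-compressing it is an elementary compression of $F'_i\cap B$ from $B$ into $A$ (cf.\ Lemma~\ref{lem:CompactElemComps}). Performing it and removing any newly created $\partial^*$-parallel annuli gives $F'_{i+1}$; Lemma~\ref{lem:NiceComps} preserves the essentiality of the curves $F'_{i+1}\cap S$ together with the incompressibility of the $B$-side on which the next step acts, and Lemma~\ref{lem:NiceDistances} gives $d_\mathcal{C}(c_i,c_{i+1})\leq1$ for compatible intersection curves. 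Since $\chi(F'_i\cap B)$ is a strictly increasing integer that stays $\leq0$ until an essential disk appears, the iteration terminates; I take $n$ to be the first index with an essential disk in $F'_n\cap B$ and $m$ the last index with an essential disk in $F'_i\cap A$ (which exists since $i=0$ qualifies), so that no component of $F'_i\cap A$ or $F'_i\cap B$ is an essential disk for $m<i<n$. For the uniqueness clause, the step producing $F'_n$ removes a single band from one component $X$ of $F'_{n-1}\cap B$, and $F'_{n-1}\cap B$ has no disk component at all (an inessential disk would violate essentiality of $F'_{n-1}\cap S$); were the resulting $X'$ to contain two disk components then $\chi(X')\geq2$, hence $\chi(X)\geq1$ and $X$ itself a disk, a contradiction; so $F'_n\cap B$ contains exactly one essential disk.

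The step I expect to be the main obstacle is the incompressibility bookkeeping. Since the compressions run from $B$ into $A$, Lemma~\ref{lem:NiceComps}(i) directly controls only the incompressibility of $F'_i\cap B$; to keep the asserted incompressibility of $F'_i\cap A$ one must argue as in Lemma~\ref{lem:No}, using that an essential disk of $A$ survives untouched on the initial range $0\leq i\leq m$ (so only the remaining components of $F'_i\cap A$ are ever modified, and their incompressibility is governed by the $A\leftrightarrow B$ mirror of Lemma~\ref{lem:NiceComps}(i)), and that beyond that range it is the $B$-side that is relevant. A secondary subtlety is the degenerate case $n=0$, where $F'\cap B$ already contains an essential disk: one then reduces to exactly one such disk by a single further elementary compression, exactly as in the last paragraph of the proof of Lemma~\ref{lem:Yes}.
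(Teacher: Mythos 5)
Your overall strategy is the same as the paper's: mirror Lemma \ref{lem:Yes}, performing elementary compressions of $F'_i\cap B$ from $B$ into $A$ (no spanning annuli can occur since $F'\cap F=\emptyset$), track $\chi(F'_i\cap B)$ until an essential disk appears, and count disks by Euler characteristic to get uniqueness. But there is a genuine gap: you never use the hypothesis that $F'$ is \emph{not isotopic to} $F$, and that hypothesis is exactly what is needed to rule out the terminal case your iteration does not address. Your termination argument (``$\chi(F'_i\cap B)$ is strictly increasing and $\leq 0$ until a disk appears'') allows the process to end with $\chi(F'_i\cap B)=0$ and $F'_i\cap B$ consisting only of annuli, or --- after the accompanying annular compressions --- with $F'_i\cap B=\emptyset$, so that no essential disk in $B$ is ever produced; your appeal to Corollary \ref{cor:SurfsInCompBods} gives nothing when the intersection is empty. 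The paper's proof devotes its central paragraph to precisely this case: when $\chi(F'_m\cap B)=0$ with no disk component, either some annulus component is disjoint from $\partial_vB$ and one more elementary compression yields the single essential disk (so $n=m+1$), or else $F'_m$ can be isotoped to lie entirely in $A$, disjoint from both $S$ and $F$, and is then forced to be isotopic to $F$ --- contradicting the non-isotopy hypothesis. Without this step the lemma is simply false as a statement about the compression procedure, since for a surface that can be isotoped off $S$ the sequence never reaches the fourth bullet.

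Your only gesture toward this issue, the check that $F'_0\cap S\neq\emptyset$, is itself flawed: if $F'_0$ lay entirely in $A$ there is no reason it would be $\partial^*$-incompressible, so Lemma \ref{lem:ProdDisks} does not apply, and the genuine danger is not that $F'_0$ is a disk but that it is parallel to $F$ --- again resolved only by the non-isotopy hypothesis --- and in any case the same possibility must be excluded at every later stage, not just at $i=0$. Two smaller points: your assertion that components of $F'_i\cap B$ are ``not $\partial$-parallel'' needs the argument that a $\partial$-parallel annulus with boundary essential in $S$ and disjoint from $F\cup\partial_vB$ is automatically $\partial^*$-parallel (otherwise the parallelism region would force a boundary-parallel or inessential curve in $S$), and the incompressibility bookkeeping on the $A$-side that you flag is indeed needed but is routine, as in Lemma \ref{lem:No}.
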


\begin{proof}
If $F'\cap A$ contains any $\partial^*$-parallel annuli, use annular compressions to remove them from $B$ and define the resulting surface $F'_0$; if not, then define $F'_0 = F'$. Because $td(S)\geq2$ and $F'_0\cap F = \emptyset$, we see that $F'_0\cap B$ contains no essential disks or essential spanning annuli. This imples that $F'_0\cap B$ is $\partial^*$-compressible. Perform an elementary compression to form $\hat{F}'_1$. Now proceed inductively (as in the proof of Theorem \ref{lem:Yes}) and let $m$ be the smallest integer such that either (i) $F'_m\cap B$ contains an essential disk component or (ii) $\chi(F'_m\cap B) = 0$ and $F'_m$ contains no essential disk components.

If $F'_m\cap B$ contains an essential disk component, recall that $\chi(F'\cap B) = \chi(F'_{m-1}\cap B) + 1$ by Lemma \ref{lem:EulerChar}. If $F'_m\cap B$ were to contain more that one essential disk component, then it would be the case that $\chi(F'_m\cap B\geq\chi(F'_{m-1}\cap B) + 2$. This contradicition dictates that $m=n$ in this case.

If $\chi(F'_m\cap B) = 0$ and $F'_m\cap B$ contains no essential disk components, then $F'_m\cap B$ must be a collection of annuli. Note that none of these annuli are $\partial^*$-parallel by construction. If all of these annuli intersect $\partial_vB$, they would also intersect $S$ since $F'_m\cap F = \emptyset$. We may then isotope $F'_m$ to lie entirely in $A$. We then see that $F'_m$ is a properly embedded surface in $B$ that is connected, incompressible, and disjoint from both $S$ and $F$. Additionally, $\partial F'_m\subset\partial_vB$ has exactly one component. Therefore, $F'_m$ must be isotopic to $F$, which contradicts the assumption of the lemma.

We can now choose an annulus in $F'_m\cap B$ that is disjoint from $\partial_vB$. Being disjoint from $F$, this annulus must be $\partial^*$-compressible. We perform the elementary compression to create $\hat{F}'_{m+1}$ and then form $F'_{m+1}$ by removing any $\partial^*$-parallel annuli via annular compressions as before. Set $n = m+1$. We then find $\chi(F'_n\cap B) = 1$ so that $F'_n\cap B$ contains exactly one essential disk component.

It cannot be the case that both $F'\cap A$ nor $F'\cap B$ contain essential disk components as $S$ is strongly irreducible. We conclude $n>m$. If $m = n-1$, then there exist curves $c_m\subset F'_m\cap S$ and $c_{m+1} = c_n\subset F'_n\cap S$ so that $c_m$ bounds an essential disk in $F'_m\cap A$ and $c_n$ bounds an essential disk in $F'_n\cap B$. Then $d(S)\leq1$ so that this contradiction allows us to conclude that $m\leq n-2$.

The remaining points of the lemma follow as in the proof of Lemma \ref{lem:Yes}.
\end{proof}

\begin{lemma}
\label{lem:SeifertNo}
Let $K\subset S^3$ be a knot whose exterior $E(K)$ has a circular Heegaard splitting $(F,S)$ such that $F$ is incompressible. Suppose further that $F'$ is an incompressible Seifert surface for $K$ disjoint from and non-isotopic to $F$ such that $F'\cap S$ is a collection of simple closed curves that are essential in $S$.

If neither $F'\cap A$ nor $F'\cap B$ contains any essential disk components, and if each component of $F'\cap A$ and $F'\cap B$ is essential in $A$ and $B$, respectively, then there exists a sequence of isotopies
\[
F'_{-m}\simeq F'_{-m+1}\simeq\cdots\simeq F'_0\simeq F'\simeq F')1\simeq\cdots\simeq F'_n
\]
such that

\begin{itemize}

	\item
	Each component of $F'_i\cap S$ is essential in $S$;
	
	\item
	Each component of $F'_i\cap A$ and $F'_i\cap B$ is essential in $A$ and $B$, respectively;
	
	\item
	For any choice of components $c_i\in F'_i\cap S$ and $c_{i+1}\in F'_{i+1}\cap S$, we have $d_\mathcal{C}(c_i,c_{i+1})leq1$ for $-m\leq i\leq n-1$;
	
	\item
	Both $F'_{-m}\cap A$ and $F'_n\cap B$ contain exactly one essential disk component, and no $F'_i\cap A$ or $F'_i\cap B$ contains any essential disk components for $-m+1\leq i\leq n-1$.

\end{itemize}

\end{lemma}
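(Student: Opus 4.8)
The plan is to imitate the proof of Lemma~\ref{lem:No}, with Lemma~\ref{lem:SeifertYes} in place of Lemma~\ref{lem:Yes}, building the required sequence by running one compression-and-cleanup procedure in each of the two directions from $F'$ and splicing them at $F'_0$. First I would record the structural input: since $F'$ is disjoint from $F$ and $F=\partial_-A=\partial_-B$, no component of $F'\cap A$ or $F'\cap B$ can be an essential spanning annulus; combined with the hypotheses that neither intersection contains an essential disk and that every component is essential, Corollary~\ref{cor:SurfsInCompBods} forces $\chi(F'\cap A)<0$ and $\chi(F'\cap B)<0$.

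Next, after removing any $\partial^*$-parallel annuli from $F'$ by annular compressions to produce $F'_0\simeq F'$, Lemma~\ref{lem:CompactElemComps} supplies an elementary compression of $F'_0\cap B$ from $B$ into $A$ across $S$, and symmetrically of $F'_0\cap A$ from $A$ into $B$. For the forward half $F'_0\simeq F'_1\simeq\cdots\simeq F'_n$ I would run the procedure from the proof of Lemma~\ref{lem:SeifertYes}: repeatedly perform an elementary compression of $F'_i\cap B$ from $B$ into $A$, remove the resulting $\partial^*$-parallel annuli by annular compressions, and stop at the first index $n$ for which $F'_n\cap B$ acquires an essential disk component. Lemma~\ref{lem:EulerChar}, together with the fact that annular compressions leave the relevant Euler characteristic unchanged, shows such an $n$ exists, and the Euler-characteristic count used in Lemma~\ref{lem:SeifertYes} shows $F'_n\cap B$ contains exactly one essential disk; the subcase in which $\chi(F'_n\cap B)$ reaches $0$ without a disk appearing is excluded exactly as in Lemma~\ref{lem:SeifertYes}, because there the leftover annuli can be pushed off so that the surface lies in a single compression body, disjoint from both $S$ and $F$, with one boundary arc on the vertical boundary, hence is $\partial$-parallel to $F$ and so isotopic to $F$, contradicting $F'\not\simeq F$. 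For the backward half $F'_{-m}\simeq\cdots\simeq F'_{-1}\simeq F'_0$ I would run the identical procedure with the roles of $A$ and $B$ interchanged, compressing $F'_i\cap A$ from $A$ into $B$ and stopping when $F'_{-m}\cap A$ first contains a single essential disk component.

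Finally I would verify the conclusions of the lemma. Essentiality of the components of $F'_i\cap A$ and $F'_i\cap B$, and in particular their incompressibility, is preserved under both elementary and annular compressions by Lemma~\ref{lem:NiceComps}(i); the curves of $F'_i\cap S$ stay essential in $S$ by Lemma~\ref{lem:NiceComps}(ii) applied at each step; consecutive intersection curves satisfy $d_\mathcal{C}(c_i,c_{i+1})\leq1$ by Lemma~\ref{lem:NiceDistances}, which also covers the annular-compression steps; and the absence of an essential disk component for $-m+1\leq i\leq n-1$ holds because $-m$ and $n$ were chosen as the \emph{first} indices at which a disk appears. The step I expect to require the most care is not the production of the disks — that is inherited from Lemma~\ref{lem:SeifertYes} — but the boundary bookkeeping when the procedure is run in the $A\leftrightarrow B$-interchanged direction: one must track $F'_i\cap\partial_vA$ and $F'_i\cap\partial_vB$ throughout and confirm that the ``$\chi$ reaches $0$ without a disk'' argument, and especially the contradiction it draws with $F'$ being non-isotopic to $F$, applies verbatim when compressing toward $A$ instead of toward $B$.
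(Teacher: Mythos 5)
Your proposal is correct and follows essentially the same route as the paper, which disposes of this lemma in one line by declaring it ``nearly identical to the proof of Lemma~\ref{lem:No}'': run the compression-and-cleanup procedure of Lemma~\ref{lem:SeifertYes} in each direction from $F'$ (interchanging the roles of $A$ and $B$ for the backward half) and splice the two sequences at $F'_0$. Your added bookkeeping — excluding spanning annuli via $F'\cap F=\emptyset$, the Euler-characteristic termination, and the reuse of the ``isotopic to $F$'' contradiction in the $\chi=0$ subcase — is exactly the content the paper implicitly imports from Lemmas~\ref{lem:SeifertYes} and~\ref{lem:No}.
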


\begin{proof}
This is nearly identical to the proof of Lemma \ref{lem:No}.
\end{proof}

We now prove Theorem \ref{thm:SeifertMain}:

\begin{proof}[Proof of Theorem \ref{thm:SeifertMain}]
This proof is similar to the Main Theorem \ref{thm:Main}. If $d(S)\leq1$, then the theorem follows; hence, we assume $d(S)\geq2$.

First, isotope $F'$ so that it intersects $S$ in a minimal number of components. Regardless of whether or not $F'\cap A$ and $F'\cap B$ contain essential disk components, Lemmas \ref{lem:SeifertYes} and \ref{lem:SeifertNo} prove there exists a sequence $F'_0,\ldots,F'_n$ of Seifert surfaces isotopic to $F'$ such that $F'_0\cap B$ and $F'_n\cap A$ each contain exactly one essential disk component. Moreover, $F'_i\cap S$ is a collection of simple closed curves that are essential in $S$ for all $0\leq i\leq n$, and $\chi(F'_{i+1}\cap A) = \chi(F'_i\cap A) + 1$.

In any case, let $c_B$ be the boundary of the essential disk in $F'_0\cap B$ and $c_A$ be the boundary of the essential disk in $F'_n\cap A$. For $0\leq i\leq n-1$, we have $d_\mathcal{C}(c_i,c_{i+1})\leq1$. By setting $c_0 = c_B$ and $c_n = c_A$, repeated application of the triangle inequality shows that $d_\mathcal{C}(c_A,c_B)\leq n$ (cf. the proof of the Main Theorem). Moreover, we find

\begin{eqnarray*}
\chi(F')	&	=			&	\chi(F'_0\cap A) + \chi(F'_0\cap B)	\\
1 - 2g		&	\leq	&	1 + \chi(F'_0\cap B)	\\
-2g				&	\leq	&	\chi(F'_0\cap B)	\\
-2g				&	\leq	&	\chi(F'_n\cap B) - n	\\
-2g				&	\leq	&	1-n	\\
n					&	\leq	&	2g+1.
\end{eqnarray*}

so that
\[
d(S)\leq d_\mathcal{C}(c_A,c_B)\leq n\leq 2g + 1.
\]
\end{proof}

%%%%%%%%%%%%%%%%%%%%%%%%%%%%%%%%%%%%%%%%%%%%%%%%%%%%%%%%%%%%%%%%%%%%%%%%%%%%%%%%%%%%%%%%%%%%%%%%%%%%%%%%%%%%%%%%%%%%%%%%%%%%%%%%%%%%%%%%%%%%%%%%%%%%%%%%%%%%%%%%%%%%%%%%%%%%%%%%%%%%%%%%%%%%%%%%%%%%%%%%%%%%%%%%%%%%%%%%%%%%%%%%%%%%%%%%%%%%%%%%%%%%%%%%%%%%%%%%%%%%%%%%%%%%%%%%%%%%%%%%%%%%%%%%%%%%%%%%%%%%%%%%%%%%%%%%%%%%%%%%%%%%%%%%%%%%%%%%%%%%%%%%%%%%%%%%%%%%%%%%%%%%%%%%%%%%%%%%%%%%%%%%%%%%%%%%%%%%%%%%%%%%%%%%%%%%%%%%%%%%%%%%%%%%%%%%%%%%%%%%%%%%%%%%%%%%%%%%%%%%%%%%%%%%%%%%%%%%%%%%%%%%%%%%%%%%%%%%%%%%%%%%%%%%%%%%%%%%%%%%%%%%%%%%%%%%%%%%%%%%%%%%%%%%%%%%%%%%%%%%%%%%%%%%%%%%%%%%%%%%%%%%%

%%%%%%%%%%%%%%%%%%%%%%%%%%%%%%%%%%%%%%%%%%%%%%%%%%%%%%%%%%%%%%%%%%%%%%%%%%%%%%%%%%%%
%%%%%%%%%%%%%%%%%%%%%%%%%%%%%%%%% END PAPER %%%%%%%%%%%%%%%%%%%%%%%%%%%%%%%%%%%%%%%%
%%%%%%%%%%%%%%%%%%%%%%%%%%%%%%%%%%%%%%%%%%%%%%%%%%%%%%%%%%%%%%%%%%%%%%%%%%%%%%%%%%%%

%%%%%%%%%%%%%%%%%%
%% Bibliography %%
%%%%%%%%%%%%%%%%%%

\end{document}